\title{Existence of global and explosive mild solutions of fractional reaction-diffusion system of semilinear SPDEs\\ with fractional noise}
\author{S. Sankar${}^{1}$, Manil T. Mohan${}^{2}$ and S. Karthikeyan${}^{1,}$\thanks{Corresponding author email: karthi@periyaruniversity.ac.in}\\
	\footnotesize{$^1$Department of Mathematics, Periyar University, Salem 636 011, India}\\
	\footnotesize{$^2$Department of Mathematics, Indian
	Institute of Technology Roorkee, Roorkee 247 667, India}}
\date{}
\colorlet{darkblue}{blue!50!black}
\colorlet{darkblue}{red!100!black}
\let\originalleft\left
\let\originalright\right
\renewcommand{\left}{\mathopen{}\mathclose\bgroup\originalleft}
\renewcommand{\right}{\aftergroup\egroup\originalright}
\begin{document}
	\maketitle \setcounter{page}{1} \numberwithin{equation}{section}
	\newtheorem{theorem}{Theorem}[section]
	\newtheorem{assumption}{Assumption}
	\newtheorem{lemma}{Lemma}[section]
	\newtheorem{Pro}{Proposition}[section]
	\newtheorem{Ass}{Assumption}[section]
	\newtheorem{Def}{Definition}[section]
	\newtheorem{Ex}{Example}[section]
	\newtheorem{Rem}{Remark}[section]
	\newtheorem{corollary}{Corollary}[section]
	\newtheorem{proposition}{Proposition}[section] 
	\newtheorem{app}{Appendix:}
	\newtheorem{ack}{Acknowledgement:}

	\begin{abstract}
	In this paper, we investigate the existence and finite-time blow-up for the solution of a reaction-diffusion system of semilinear stochastic partial differential equations (SPDEs) subjected to a two-dimensional fractional Brownian motion given by 
	\begin{equation*}
	\left\{
	\begin{aligned}
	du_{1}(t,x)&=\left[ \Delta_{\alpha}u_{1}(t,x)+\gamma_{1}u_{1}(t,x)+u^{1+\beta_{1}}_{2}(t,x) \right]dt \\
	&\qquad \ \ +k_{11}u_{1}(t,x)dB^{H}_{1}(t)+k_{12}u_{1}(t,x)dB^{H}_{2}(t), \\
	du_{2}(t,x)&=\left[ \Delta_{\alpha}u_{2}(t,x)+\gamma_{2}u_{2}(t,x)+u^{1+\beta_{2}}_{1}(t,x) \right]dt \\
	&\qquad \ \ +k_{21}u_{2}(t,x)dB^{H}_{1}(t)+k_{22}u_{2}(t,x)dB^{H}_{2}(t), \\
	\end{aligned}
	\right.
	\end{equation*}
	for $x \in \mathbb{R}^{d},\ t \geq 0$, along with 	\begin{equation*}
	\begin{array}{ll}
	u_{i}(0,x)=f_{i}(x),  &x \in \mathbb{R}^{d}, \nonumber\\
	\end{array}
	\end{equation*}
	where $\Delta_{\alpha}$ is the fractional power $-(-\Delta)^{\frac{\alpha}{2}}$ of the Laplacian, $0<\alpha \leq 2$ and $\beta_{i}>0,\ \gamma_{i}>0$ and $k_{ij}\geq 0, i,j=1,2$ are constants. We provide sufficient conditions for the existence of a global weak solution. Under the assumption that $\beta_{1}\geq \beta_{2}>0$ with Hurst index $ 1/2 \leq H < 1,$ we obtain the blow-up times for an associated system of random partial differential equations  in terms of an integral representation of exponential functions of Brownian motions. Moreover, we provide lower and upper bounds for the finite-time blow-up of the above system of SPDEs and obtain the upper bounds for the probability of non-explosive solutions to our considered system.\\

	\noindent {\it Keywords: Semilinear SPDEs; fractional Brownian motion; blow-up times; stopping times; lower and upper bounds.}\\
		
	\noindent {\it MSC: 35R60; 60H15; 74H35}
	\end{abstract}
	\section{Introduction}
	A considerable effort has been devoted to exploring the problem of finite-time blow-up solutions to a numerous families of deterministic partial differential equations (PDEs). In particular, motivated by the primary and seminal papers by Kaplan \cite{kaplan} and Fujita \cite{fuji1966, Fuji1968}, new techniques were developed for studying the blow-up behavior to a more general class of nonlinear PDEs. As far as the stochastic counterpart is concerned, there are only a  limited results available on the non-existence of solutions of stochastic partial differential equations (SPDEs). For SPDEs driven by Brownian motions, Mueller and Sowers \cite{muller} proved that the heat equation	$$u_t=u_{xx}+u^\gamma\dot{W}$$ with continuous and nonnegative initial data blow-up in a finite-time with positive probability when $\gamma>3/2$ where 	$\dot{W}=\dot{W}(\cdot,\cdot)$ is a two-dimensional white noise process. Chow \cite{chow2009} proposed the notion of blow-up solution to nonlinear stochastic wave equations whose $L^2$-norms explode at a finite-time in the mean-square sense under some appropriate conditions on the initial data and the noise term. The existence of global positive solutions of semilinear stochastic partial differential equations (SPDEs) has been extensively investigated in \cite{chow09, chow11, friedman1985}, etc. Dozzi and  L\`opez-Mimbelab \cite{doz2010} investigated the  blow-up problem for the following SPDE:
    \begin{eqnarray}\label{basic}
   	du(t,x)=[\Delta  u(t,x)+G(u(t,x))]dt+\kappa u(t,x)dW(t), 
    \end{eqnarray}
 with the Dirichlet data,   on a smooth bounded domain, perturbed by a standard one-dimensional Brownian motion $\{W(t):t\geq 0\}$ defined on some probability space $\left( \Omega, \mathscr{F}, \mathbb{P} \right)$. Here $G:\mathbb R\to \mathbb R^+$ is a locally Lipschitz function satisfying
    \begin{eqnarray}\label{fung}
    G(z)\leq Cz^{1+\beta}, \ \mbox{for all} \ z>0,
    \end{eqnarray}  
    where $C \ \mbox{and} \ \beta >0$ are given numbers. The bounds for the explosion time are estimated in terms of exponential functionals of $W(\cdot)$ by transforming the original equation into a random PDE. For the case, $G(u)=u^{1+\beta}$ with $\beta>0$, the upper and lower bounds for the explosion time are estimated in \cite{car2013} by using a formula developed by Yor (cf. \cite{Fenman1997, revuz1999, yor2001}, etc.).     
    
    The impact of Gaussian noises on the time of blow-up of solutions to the following nonlinear SPDE:
    $$du(t,x)=[\Delta  u(t,x)+G(u(t,x))]dt+k_1u(t,x)dW_{1}(t)+k_2u(t,x)dW_2(t), $$    with the Dirichlet boundary condition, where $G$  satisfying \eqref{fung} and $\left( W_{1}(t),W_{2}(t) \right)_{t \geq 0} $ is a two-dimensional Brownian motion was investigated by Niu and Xie \cite{niu2012}. Dozzi et. al., \cite{doz2013} extended the results of \eqref{basic} to a system of semilinear SPDEs with the Dirichlet boundary data on a bounded smooth domain defined on some probability space $\left( \Omega, \mathscr{F}, \mathbb{P} \right)$.

 Concerning semilinear SPDEs driven by fractional Brownian motions,  Dozzi et. al., \cite{dozfrac2010} derived lower and upper bounds for the explosion time to a semilinear SPDE driven by a one-dimensional fractional Brownian motion $\{B^{H}(t)\}_{t\geq 0}$ with Hurst index $H>\frac{1}{2}$. Further, sufficient conditions for blow-up in finite-time and  the existence of a global solution are deduced and the probability of finite-time blow-up are obtained. Dung \cite{dung} proposed a new approach to obtain the upper and lower bounds for the probability of the finite-time blow-up of solution in terms of the Hurst parameter $H\in (0,1)$. Note that  for the case $H=\frac{1}{2}$,  $B^{H}(t)=W(t)$, a standard Brownian motion. In \cite{smk2}, lower and upper bounds for the explosion times of a system of semilinear SPDEs with standard Brownian motion are obtained. Sankar et. al., \cite{smk1} established the lower and upper bounds for the non-global solution for a system of semilinear SPDEs driven by two-dimensional fractional Brownian motion.
  
    Only a  few authors have addressed the explosive behavior of SPDEs with fractional Laplacian operators of the nonlocal type. Wang \cite{wang} proved the positivity of solutions to such a stochastic nonlocal heat equation under some assumptions and proved that the solution blow-up in  finite-time. In a recent paper, Dozzi et. al., \cite{doz2020} provided  sufficient conditions for finite-time blow-up of positive weak solutions of a fractional reaction-diffusion equation perturbed by a fractional Brownian motion (fBm). Moreover, lower and upper bounds for the probability that the solution does not explode in a finite-time are also provided. However, for many classes of SPDEs, the problem of existence and non-existence of solutions is not yet completely explored.
    
  	Motivated by the above facts, in the present paper, our aim is to obtain the global existence, and lower and upper bounds for the blow-up time to the following system of semilinear SPDEs:  
	\begin{equation}\label{b1}
	\left\{
	\begin{aligned}
	du_{1}(t,x)&=\left[ \Delta_{\alpha}u_{1}(t,x)+\gamma_{1}u_{1}(t,x)+u^{1+\beta_{1}}_{2}(t,x) \right]dt \\
	&\quad \ \ +k_{11}u_{1}(t,x)dB^{H}_{1}(t)+k_{12}u_{1}(t,x)dB^{H}_{2}(t), \\
	du_{2}(t,x)&=\left[ \Delta_{\alpha}u_{2}(t,x)+\gamma_{2}u_{2}(t,x)+u^{1+\beta_{2}}_{1}(t,x) \right]dt \\
	&\quad \ \ +k_{21}u_{2}(t,x)dB^{H}_{1}(t)+k_{22}u_{2}(t,x)dB^{H}_{2}(t), \\
	\end{aligned}
	\right.
	\end{equation}
	for $x \in \mathbb{R}^{d},\ t \geq 0$, along with initial conditions	\begin{equation}\label{b2}
	\begin{array}{ll}
	u_{i}(0,x)=f_{i}(x),  &x \in \mathbb{R}^{d},\ i=1,2, \nonumber
	\end{array}
	\end{equation}
	where $\Delta_{\alpha}$ is the fractional power $-(-\Delta)^{\frac{\alpha}{2}}$ of the Laplacian. Here, $0<\alpha \leq 2$, and $\beta_{1}\geq \beta_{2}>0,\ \gamma_{i}>0$ and $k_{ij}\geq 0, i,j=1,2$ are real constants. Moreover, $f_1,\ f_2$ are $C^{2}$-functions which are assumed to be non-negative and not identically zero. Further, $\left( B_{1}^{H}(t),B_{2}^{H}(t) \right)_{t \geq 0} $ is a standard independent two-dimensional fractional Brownian motion (fBm) having Hurst index $\frac{1}{2}\leq H<1$ with respect to a filtered probability space $\left( \Omega, \mathscr{F}, (\mathscr{F}_{t})_{t \geq 0}, \mathbb{P} \right)$. The operator $\Delta_{\alpha}$ is the usual Laplacian when  $\alpha=2$ and is the infinitesimal generator of $d$-dimensional Brownian motion with variance parameter 2. For $\alpha \in (0,2),$
    \begin{align} 
    \Delta_{\alpha}u(x)=C(d,\alpha) PV\left( \int_{\mathbb{R}^{d}} \frac{u(x+y)-u(x)}{\arrowvert y \arrowvert^{d+\alpha}} dy\right), \ \ u \in C^{2}_{c}(\mathbb{R}^{d}), \nonumber  
    \end{align} 
    where $C(d,\alpha)$ is a constant and $PV$ denotes the principal value of the integral, and in this case $\Delta_{\alpha}$ is the generator of the symmetric $\alpha$-stable L\'{e}vy process $\{{X}_t \}_{ t\geq 0}$ on $\mathbb{R}^{d}$ satisfying  $$\mathbb{E}\left[ \exp(i\boldsymbol{u}\cdot {X}_{t}) | {X}_{0}=0 \right]=\exp(-t |\boldsymbol{u} |^{\alpha}), \ \boldsymbol{u} \in \mathbb{R}^{d}, \ t \geq 0.$$ 
    
    The system \eqref{b1} is the two-dimensional version of the equation (1) studied in the paper of Dozzi et. al., \cite{doz2013} with $\alpha=2, H=1/2$ and also a generalization of the equation (1.3) in Dozzi et. al., \cite{doz2020}. Futher, the methods adopted in this paper closely follow the papers \cite{doz2013} and \cite{doz2020}.
    
    Let us recall the notion of weak and mild solutions of $\eqref{b1}$. Let $0\leq \tau \leq  +\infty$ be a stopping time. A continuous random vector field $u=(u_1,u_2)^{\top}=\left\lbrace (u_{1}(t,x),u_2(t,x))^{\top}: \ t\geq 0,\ x \in \mathbb{R}^{d} \right\rbrace $ is a weak solution of $\eqref{b1}$ on the interval $(0,\tau)$ provided that for every $\varphi_i\in C^\infty_{c}(\mathbb{R}^{d}),$ there holds
    \begin{align*} 
    	&\int_{D}u_{i}\left(t,x\right)\varphi_{i}(x)dx\\&=\int_{D}u_{i}\left(0,x\right)\varphi_{i}(x)dx+\int_{0}^{t}\int_{D}u_{i}\left(s,x\right)\Delta_{\alpha}\varphi_{i}(x)dxds+\gamma_{i}\int_{0}^{t}\int_{D}u_{i}\left(s,x\right)\varphi_{i}(x)dxds\nonumber\\
    	&\quad+\int_{0}^{t}\int_{D} u^{1+\beta_{i}}_{j}\left(s,x\right)\varphi_{i}(x)dxds+k_{i1}\int_{0}^{t}\int_{D}u_{i}\left(s,x\right)\varphi_{i}(x)dxdB^{H}_{1}(s)\nonumber\\
    	&\quad+k_{i2}\int_{0}^{t}\int_{D}u_{i}\left(s,x\right)\varphi_{i}(x)dxdB^{H}_{2}(s),\ \mathbb{P}-a.s.
    	\end{align*} for $i=1,2, \ j\in \left\{ 1,2 \right\} / \left\{i\right\}.$
    Let us denote by $\left\{ S_{t} \right\}_{t\geq 0}$, the $\alpha$-stable semigroup of bounded linear operators with generator $\Delta_{\alpha}$ and $\left\lbrace p(t,x): t>0, x\in \mathbb{R}^{d} \right\rbrace $ to indicate the family of spherically symmetric $\alpha$-stable transition densities. Hence, for any $t\geq 0$ and bounded measurable function $f : \mathbb{R}^{d} \rightarrow \mathbb{R}$, the bounded linear operator is defined by
    \begin{eqnarray}
    S_{t}f(x)=\mathbb{E}\left[ f(X_{t}) |X_{0}=x \right]=\int_{\mathbb{R}^{d}} p(t,y-x)f(y)dy, \quad  x\in \mathbb{R}^{d},\nonumber
    \end{eqnarray}
    where $\left\{ X_{t} \right\}_{t\geq 0}$ is the symmetric $\alpha$-stable L\'evy process  in $\mathbb{R}^{d}.$ 
    For every $t \geq 0,$ the linear operator $S_{t}$ is a contraction that preserves positivity and therefore the bounded linear operators
    \begin{align}\label{op1} 
    T_{i}(t)=\exp\{ \gamma_{i} t \}S_{t}, \ \ i=1,2,
    \end{align} 
    are also positivity preserving. Moreover, $\left\{T_{1}(t) \right\}_{t \geq 0 }$ and $\left\{T_{2}(t) \right\}_{t \geq 0 }$are linear semigroups  with infinitesimal generators $\Delta_{\alpha}+\gamma_{1}$ and $\Delta_{\alpha}+\gamma_{2}$ respectively. 
    
    From the semigroup theory \cite{pazy}, for any bounded measurable initial data $f_{i} \geq 0,$ a continuous random valued vector field $u=(u_1,u_2)^{\top}$ is a mild solution of the system \eqref{b1} if there exists a number $0<\tau=\tau(\omega)\leq \infty$ such that $u=(u_1,u_2)^{\top}$ satisfies the following integral equation on $(0, \tau)$:
    \begin{align}
    u_{i}(t,x)&=T_{i}(t) f_{i}(x)+\int_{0}^{t} T_{i}(t-s) u_{j}^{1+\beta_{i}}(s,x) ds+k_{i1}\int_{0}^{t} T_{i}(t-s)u_{i}(s,x)  dB^{H}_{1}(s) \nonumber\\
    &\qquad+k_{i2}\int_{0}^{t} T_{i}(t-s)u_{i}(s,x)  dB^{H}_{2}(s),\ \mathbb{P}\text{-a.s.} \nonumber 
    \end{align} for $i=1,2, \ j\in \left\{ 1,2 \right\} / \left\{i\right\}.$ From the classical results of semigroup theory \cite{pazy}, we note that for any bounded measurable initial data $f_{i} \geq 0, i=1,2,$ there exists a unique local mild solution $v=(v_1,v_2)^{\top}$ of the system of random PDEs \eqref{s1} (see below) by Theorem 2.5 in \cite{Atienza} which is the weak solution of the system \eqref{s1}. By using  the random transformation given in \eqref{tr1}, we obtain the existence and blow-up  of the weak solutions of the system \eqref{b1}. 
     
 The structure of the remaining paper is as follows:  In section \ref{sec2}, we obtain an associated system of random PDEs by using a random transformation and derive a weak solution  which is useful to obtain lower and upper bounds for the blow-up of solutions for the system \eqref{b1}.  Further, we prove that weak and mild solutions of  the system \eqref{b1} are equivalent. The lower bounds for the finite-time blow-up of the system \eqref{b1} with $1/2 < H<1$ and suitable parameters are obtained in section \ref{sec3} (cf. Theorem \ref{t2}). For appropriate initial data, upper bounds for the blow-up time of the solution $u=(u_1,u_2)^{\top}$ of the system \eqref{b1} for $1/2<H<1$, with the parameters $\rho_{1}=(1+\beta_{1})k_{21}-k_{11}=(1+\beta_{2})k_{11}-k_{21} $ and  $\rho_{2}=  (1+\beta_{1})k_{22}-k_{12}=(1+\beta_{2})k_{12}-k_{22}$ are obtained in section \ref{sec4} (cf. Theorem \ref{thm1}).  Furthermore, we establish upper bounds for the probability of non-explosive positive solution $u=(u_1,u_2)^{\top}$ of the system \eqref{b1} (cf. Theorem \ref{thm2}).  In section \ref{se12}, we obtain upper bounds for the probability of the existence of a global mild solution of the system \eqref{b1} for the case $H=\frac{1}{2}$ (Theorem \ref{thm5.1}). In section \ref{sec5}, we provide conditions for the global existence in time of the mild solution $u=(u_1,u_2)^{\top}$ to the system \eqref{b1} for the case $1/2\leq H<1$ and establish the upper bounds for the probability of the non-explosion solution $u=(u_1,u_2)^{\top}$ to the system \eqref{b1}. In the final section, we find lower and upper bounds for the blow-up time of the system \eqref{b1} for a more general case.

	\section{A System of Random PDEs}\label{sec2}
	In this section, we obtain a system of random PDEs by making use of the random transformations 
	\begin{align}\label{tr1}
	v_{i}(t,x) = \exp\{-k_{i1}B_{1}^{H}(t)-k_{i2}B_{2}^{H}(t)\}u_{i}(t,x),\ i=1,2,
	\end{align}
	for $t \geq 0,\ x \in \mathbb{R}^{d}$. The system $\eqref{b1}$ is transformed into the following  system of random PDEs:
	\begin{equation}\label{s1}
	\left\{
	\begin{aligned}
	\frac{\partial v_{i}(t,x)}{\partial t}&=\left( \Delta_{\alpha}+\gamma_{i} \right)v_{i}(t,x)+e^{-k_{i1}B_{1}^{H}(t)-k_{i2}B_{2}^{H}(t)}\left( e^{k_{j1}B_{1}^{H}(t)+k_{j2}B_{2}^{H}(t)}v_{j}(t,x) \right)^{1+\beta_{i}},\\
	v_{i}(0,x)&=f_{i}(x), \ \  x \in \mathbb{R}^{d},  
	\end{aligned}
	\right.
	\end{equation}
	for $i=1,2, \ j\in \left\{ 1,2 \right\} / \left\{i\right\}.$ This system is understood in the pathwise sense and thus the classical results for parabolic PDEs can be applied to show existence, uniqueness and positivity of solution $v=(v_1,v_2)^{\top}\in C([{0,\tau}];L^2(\mathbb{R}^d;\mathbb{R}^2))$ up to an eventual blow-up. 
	
	Next theorem helps us to prove that the system \eqref{b1} is transformed into the system of random PDEs \eqref{s1} when $\frac{1}{2}<H<1$.
	\begin{theorem} \label{thm2.1}
	Let $u=(u_1,u_2)^{\top}$ be the weak solution of $\eqref{b1}$. Then the function $v=(v_1,v_2)^{\top}$ defined by
	\begin{eqnarray}
	v_{i}(t,x) = \exp\{-k_{i1}B_{1}^{H}(t)-k_{i2}B_{2}^{H}(t)\}u_{i}(t,x), \ t \geq 0,\ x \in \mathbb{R}^{d},\ i=1,2, \nonumber 
	\end{eqnarray} 
    is a weak solution of the system of random PDEs $\eqref{s1}$ and viceversa.
		\end{theorem}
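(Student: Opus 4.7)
The plan is to reduce the claim to a pathwise product-rule computation, exploiting the fact that for $H>1/2$ the fractional Brownian motion has zero quadratic variation, so the stochastic integral against $B^{H}_{1},B^{H}_{2}$ is of Young type and admits the ordinary chain rule with no Itô correction. Set $\eta_{i}(t)=\exp\bigl\{-k_{i1}B^{H}_{1}(t)-k_{i2}B^{H}_{2}(t)\bigr\}$, so that $v_{i}(t,x)=\eta_{i}(t)u_{i}(t,x)$, and note that $\eta_{i}$ depends on $t$ only (not on $x$). First I would apply the scalar chain rule to the smooth function $\eta_{i}$ of the Hölder-$H$ path $(B^{H}_{1},B^{H}_{2})$, obtaining
\begin{equation*}
d\eta_{i}(t)=-\eta_{i}(t)\bigl(k_{i1}\,dB^{H}_{1}(t)+k_{i2}\,dB^{H}_{2}(t)\bigr).
\end{equation*}

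Next I would apply the Young-type product rule to $v_{i}(t,x)=\eta_{i}(t)u_{i}(t,x)$ tested against a function $\phi\in C^{2}_{c}(\mathbb{R}^{d})$, using the weak formulation of \eqref{b1}; because $\eta_{i}$ is of finite $1/H$-variation and the fBm driver is Hölder of order $H$ with $2H>1$, the cross term in the product rule vanishes. The resulting identity is
\begin{equation*}
dv_{i}(t,x)=\eta_{i}(t)\,du_{i}(t,x)+u_{i}(t,x)\,d\eta_{i}(t).
\end{equation*}
Substituting the equation for $du_{i}$ from \eqref{b1} and the expression for $d\eta_{i}$, the two noise contributions $\pm\eta_{i}u_{i}(k_{i1}dB^{H}_{1}+k_{i2}dB^{H}_{2})$ cancel, leaving only the drift
\begin{equation*}
dv_{i}(t,x)=\eta_{i}(t)\bigl[\Delta_{\alpha}u_{i}(t,x)+\gamma_{i}u_{i}(t,x)+u_{j}^{1+\beta_{i}}(t,x)\bigr]dt.
\end{equation*}
Since $\eta_{i}(t)$ is independent of $x$, it commutes with $\Delta_{\alpha}$, so $\eta_{i}\Delta_{\alpha}u_{i}=\Delta_{\alpha}v_{i}$ and $\eta_{i}\gamma_{i}u_{i}=\gamma_{i}v_{i}$. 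For the nonlinear term I would rewrite $u_{j}=\eta_{j}^{-1}v_{j}=\exp\{k_{j1}B^{H}_{1}(t)+k_{j2}B^{H}_{2}(t)\}v_{j}$, so that
\begin{equation*}
\eta_{i}(t)\,u_{j}^{1+\beta_{i}}(t,x)=e^{-k_{i1}B^{H}_{1}(t)-k_{i2}B^{H}_{2}(t)}\bigl(e^{k_{j1}B^{H}_{1}(t)+k_{j2}B^{H}_{2}(t)}v_{j}(t,x)\bigr)^{1+\beta_{i}},
\end{equation*}
which is exactly the forcing term appearing in \eqref{s1}. The initial condition $v_{i}(0,x)=f_{i}(x)$ follows immediately from $B^{H}_{i}(0)=0$, completing the derivation.

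The main obstacle is justifying rigorously the Young-type product/chain rule in the weak (in $x$) formulation, since one factor is a fractional Brownian path of time only and the other is a generalized function-valued process driven by fBm. I would handle this by the standard smoothing argument: approximate the paths of $(B^{H}_{1},B^{H}_{2})$ by smooth $\mathcal{C}^{1}$ paths $(B^{H,\varepsilon}_{1},B^{H,\varepsilon}_{2})$ on compacts, for which the classical chain rule applies unambiguously after pairing with $\phi$; then pass to the limit $\varepsilon\downarrow 0$ using the continuity of the Young integral with respect to the driver in Hölder norm, together with the pathwise well-posedness (up to blow-up) of \eqref{s1} guaranteed by Theorem 2.1 of \cite{exist}. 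All other steps are algebraic identities, and the equivalence of weak and mild formulations noted in the paper allows one to interpret the final identity in either sense.
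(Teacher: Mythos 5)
Your proposal is correct and follows essentially the same route as the paper: the paper likewise differentiates the exponential factor via the fBm Itô/chain rule (Lemma 2.7.1 of \cite{mis2008}, which for $H>1/2$ carries no correction term), applies the pathwise integration-by-parts (product) formula to $v_i=\eta_i u_i$ tested against $\varphi_i\in C_c^\infty(\mathbb{R}^d)$, cancels the noise terms, and identifies the remaining drift with the right-hand side of \eqref{s1}. Your additional smoothing-approximation justification of the Young product rule is a reasonable elaboration of what the paper delegates to the cited references.
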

	\begin{proof}
	By using Ito's formula (Lemma 2.7.1 in \cite{mis2008}), we have 
	\begin{eqnarray}
	e^{-k_{i1}B^{H}_{1}(t)-k_{i2}B^{H}_{2}(t)}=1-\int_{0}^{t} e^{-k_{i1}B^{H}_{1}(s)-k_{i2}B^{H}_{2}(s)} (k_{i1}dB^{H}_{1}(s)+k_{i2}dB^{H}_{2}(s)),\ i=1,2. \nonumber 
	\end{eqnarray}
	For any smooth functions $\varphi_{1}$ and $\varphi_{2}$  with compact support, we denote
	$$u_{i}(t,\varphi_{i})=\int_{\mathbb{R}^{d}}u_{i}(t,x) \varphi_{i}(x)dx, \ \ i=1,2.$$
For every $\varphi_i\in C^\infty_{c}(\mathbb{R}^{d}),$ there holds on $(0,\tau)$
	\begin{align}\label{c1} 
	u_{i}\left(t,\varphi_{i}\right)&=u_{i}\left(0,\varphi_{i}\right)+{\int_{0}^{t}u_{i}\left(s,\Delta_{\alpha}\varphi_{i}\right)ds}+\gamma_{i}\int_{0}^{t}u_{i}\left(s,\varphi_{i}\right)ds+\int_{0}^{t} u^{1+\beta_{i}}_{j}\left(s,\varphi_{i}\right)ds\nonumber\\
	&\quad+k_{i1}\int_{0}^{t}u_{i}\left(s,\varphi_{i}\right)dB^{H}_{1}(s)+k_{i2}\int_{0}^{t}u_{i}\left(s,\varphi_{i}\right)dB^{H}_{2}(s),
	\end{align}
	where $i=1,2,\ \left\lbrace j \right\rbrace=\left\lbrace1,2\right\rbrace/\{i\}.$  By applying the integration by parts formula (see p.184 in \cite{mis2008}), we obtain
	\begin{align*}
	v_{i}(t,\varphi_{i}):&=\int_{\mathbb{R}^{d}}v_{i}(t,x)\varphi_{i}(x)dx \nonumber\\
    &=v_{i}(0,\varphi_{i})+\int_{0}^{t} e^{-k_{i1}B^{H}_{1}(t)-k_{i2}B^{H}_{2}(t)}du_{i}(s,\varphi_{i})\nonumber\\
	&\quad+\int_{0}^{t} u_{i}(s,\varphi_{i})\left(e^{-k_{i1}B^{H}_{1}(s)-k_{i2}B^{H}_{2}(s)} (k_{i1}dB^{H}_{1}(s)+k_{i2}dB^{H}_{2}(s))ds\right).\nonumber
	\end{align*} 
	Therefore,
	\begin{align}\label{a6}
	v_{i}(t,\varphi_{i}) &= v_{i}(0,\varphi_{i})+\int_{0}^{t}v_{i}\left(s,\Delta_{\alpha}\varphi_{i}\right)ds+\gamma_{i}\int_{0}^{t}v_{i}\left(s,\varphi_{i}\right)ds \nonumber\\
	&\qquad +\int_{0}^{t}e^{-k_{i1}B^{H}_{1}(t)-k_{i2}B^{H}_{2}(t)} \left( e^{k_{j1}B^{H}_{1}(t)+k_{j2}B^{H}_{2}(t)} v_{j} \right)^{1+\beta_{i}} (s,\varphi_i)ds \nonumber\\
	&= v_{i}(0,\varphi_{i})+\int_{0}^{t}[v_{i}\left(s,\Delta_{\alpha}\varphi_{i}\right)+\gamma_{i}v_{i}\left(s,\varphi_{i}\right)]ds \nonumber\\
	&\qquad+\int_{0}^{t}e^{-k_{i1}B^{H}_{1}(t)-k_{i2}B^{H}_{2}(t)} \left( e^{k_{j1}B^{H}_{1}(t)+k_{j2}B^{H}_{2}(t)} v_{j} \right)^{1+\beta_{i}} (s,\varphi_i)ds.  
	\end{align}
	The preceding equalities mean that $v=(v_1,v_2)^{\top}$ is a weak solution of $\eqref{s1}$. The converse part follows from the fact that the change of variable is given by a homeomorphism  which transforms one random dynamical system into an another equivalent one. 
	\end{proof}
	\begin{Rem}
	From the classical results available in semigroup theory \cite{pazy}, we note that for any bounded measurable initial data $f_{i} \geq 0,$ there exists a unique local mild solution $v=(v_1,v_2)^{\top}$ of the random PDE \eqref{s1} if there exists a number $0<\tau=\tau(\omega)\leq \infty$ such that $v=(v_1,v_2)^{\top}$ satisfies the following integral equation 
	\begin{align}\label{e3}
	v_{i}(t)=T_{i}(t) f_{i}(x)+\int_{0}^{t} T_{i}(t-r)\left[ e^{-k_{i1}B^{H}_{1}(r)-k_{i2}B^{H}_{2}(r)}\left(e^{k_{j1}B^{H}_{1}(r)+k_{j2}B^{H}_{2}(r)}v_{j}(r,\cdot)\right)^{1+\beta_{i}} \right](x)dr, 
	\end{align}
	or, equivalently,
	\begin{align}\label{n3}
	v_{i}(t,x)&=S_{t}f_{i}(x)\nonumber\\
	&\quad+\int_{0}^{t} S_{t-r}\left[\gamma_{i}v_{i}(s,\cdot)+ e^{-k_{i1}B^{H}_{1}(r)-k_{i2}B^{H}_{2}(r)}\left(e^{k_{j1}B^{H}_{1}(r)+k_{j2}B^{H}_{2}(r)}v_{j}(r,\cdot)\right)^{1+\beta_{i}} \right](x)dr,
	\end{align}
	for $i=1,2, \ j\in \left\{ 1,2 \right\} / \left\{i\right\}$ and for each $0 \leq t<\tau.$ 
	\end{Rem}

	
	\begin{lemma}\label{lem2.1}
	$v=(v_1,v_2)^{\top}$ is a weak solution of \eqref{s1} on $\left[0,\tau \right]$ if and only if $v=(v_1,v_2)^{\top}$ is a mild solution of \eqref{s1} on $\left[0,\tau \right].$  
	\end{lemma}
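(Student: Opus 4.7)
The proof is pathwise, since on $\{t<\tau\}$ the random PDE \eqref{s1} involves only the deterministic semigroup $\{T_i(t)\}_{t\geq 0}$ with continuous (random) coefficients. Set
\[
F_i(s,x):=e^{-k_{i1}B^{H}_{1}(s)-k_{i2}B^{H}_{2}(s)}\bigl(e^{k_{j1}B^{H}_{1}(s)+k_{j2}B^{H}_{2}(s)}v_{j}(s,x)\bigr)^{1+\beta_i},
\]
which, thanks to the continuity of the fBm paths and of $v$, is locally bounded on $[0,\tau)\times\mathbb{R}^d$. I will prove the two implications separately, using the self-adjointness of the symmetric $\alpha$-stable semigroup $\{S_t\}_{t\geq 0}$, the identity $T_i(t)\varphi = \varphi + \int_0^t T_i(s)(\Delta_{\alpha}+\gamma_i)\varphi\,ds$ valid for $\varphi\in D(\Delta_\alpha)\supset C^{\infty}_c(\mathbb{R}^d)$, and Fubini.

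\emph{Mild $\Rightarrow$ Weak.} I start from \eqref{e3}, pair with a fixed $\varphi_i\in C^{\infty}_c(\mathbb{R}^d)$, and move $T_i(t)$ and $T_i(t-r)$ onto $\varphi_i$ by self-adjointness. Applying the semigroup identity above to both $T_i(t)\varphi_i$ and $T_i(t-r)\varphi_i$ and interchanging the resulting integrals (Fubini is legitimate since $F_i$ is locally bounded), the right-hand side rearranges into the form of \eqref{a6}, which is precisely the weak formulation of \eqref{s1}.

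\emph{Weak $\Rightarrow$ Mild.} This is the delicate direction. Fix $t\in[0,\tau)$ and $\varphi\in C^{\infty}_c(\mathbb{R}^d)$, and choose the time-dependent test function $\psi_i(s,\cdot):=T_i(t-s)\varphi$ for $s\in[0,t]$, which satisfies $\partial_s\psi_i=-(\Delta_\alpha+\gamma_i)\psi_i$ in $L^2(\mathbb{R}^d)$. Since $\psi_i(s,\cdot)$ lies in $D(\Delta_\alpha)$ but not a priori in $C^{\infty}_c$, I first extend the class of admissible test functions in \eqref{a6} from $C^{\infty}_c$ to $D(\Delta_\alpha)$ by a standard mollification and cut-off argument, using the $L^2$-continuity of $v_i$ guaranteed by Theorem 2.1 of \cite{exist} and the boundedness of $F_i$. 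Next, I apply the weak identity on a partition $0=s_0<s_1<\cdots<s_N=t$ with test function $\psi_i(s_k,\cdot)$ on each subinterval $[s_{k-1},s_k]$, sum over $k$, and let the mesh tend to zero. The discrete differences in time produce, in the limit, the integral $\int_0^t \langle v_i(s),\partial_s\psi_i(s)\rangle\,ds=-\int_0^t\langle v_i(s),(\Delta_\alpha+\gamma_i)\psi_i(s)\rangle\,ds$, which exactly cancels the spatial terms $\int_0^t\langle v_i(s),(\Delta_\alpha+\gamma_i)\psi_i(s)\rangle\,ds$ coming from the weak formulation. What remains is
\[
\langle v_i(t),\varphi\rangle=\langle f_i,T_i(t)\varphi\rangle+\int_0^t \langle F_i(s,\cdot),T_i(t-s)\varphi\rangle\,ds,
\]
and moving $T_i$ back onto $f_i$ and $F_i(s,\cdot)$ by self-adjointness gives the pairing of \eqref{e3} with $\varphi$. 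Since $\varphi\in C^{\infty}_c(\mathbb{R}^d)$ is arbitrary and both sides are continuous in $x$, \eqref{e3} holds pointwise.

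The main obstacle is the extension of the weak formulation to the time-dependent test functions $T_i(t-s)\varphi$ and the passage to the limit in the Riemann-sum approximation; once the regularity $v_i\in C([0,\tau];L^2(\mathbb{R}^d))$ and the local boundedness of $F_i$ are in hand, the telescoping/cancellation and Fubini steps are routine.
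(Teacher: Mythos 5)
Your proposal is correct and follows essentially the same route as the paper: both directions rest on the self-adjointness of $S_t$, Fubini, and the backward test function $S_{t-s}\varphi$ (equivalently $T_i(t-s)\varphi$), which annihilates $\partial_s+\Delta_\alpha$ via the Kolmogorov equation, with the paper absorbing $\gamma_i v_i$ into the forcing term instead of working with $T_i$ directly. The only difference is technical bookkeeping: the paper extends the weak identity to time-dependent test functions by first treating products $h_i(s)\varphi_i(x)$ and then invoking density, whereas you propose a mollification plus partition/telescoping limit --- both standard and equivalent.
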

	\begin{proof}
	Assume that $v=(v_1,v_2)^{\top}$  is a   weak solution of \eqref{s1}. Let $h_{i} \in C^{1}(\mathbb{R}), \varphi_{i} \in C_{c}^{2}(\mathbb{R}^{d}),$ for $i=1,2$. For convenience, let us take $$g_{i,j}(t,x)=\gamma_{i}v_{i}(t,x)+e^{-k_{i1}B_{1}^{H}(t)-k_{i2}B_{2}^{H}(t)}\left( e^{k_{j1}B_{1}^{H}(t)+k_{j2}B_{2}^{H}(t)}v_{j}(t,x) \right)^{1+\beta_{i}},$$ for $i=1,2,  \ j\in \left\{ 1,2 \right\} / \left\{i\right\}.$
	Using integration by parts formula for any $t< \tau,$ and by denoting $ \langle\cdot,\cdot \rangle$,  the scalar product in $L^{2}(\mathbb{R}^d),$ we have
	\begin{align}
	\langle h_{i}(t) \varphi_{i}(\cdot),v_{i}(t,\cdot) \rangle&=\langle h_{i}(0) \varphi_{i}(\cdot),v_{i}(0,\cdot) \rangle + \int_{0}^{t} \langle h_{i}(s) \Delta_{\alpha}\varphi_{i}(\cdot),v_{i}(s,\cdot) \rangle ds \nonumber\\
	&\qquad+ \int_{0}^{t} \left\langle\frac{d}{ds} h_{i}(s) \varphi_{i}(\cdot),v_{i}(s,\cdot) \right\rangle ds + \int_{0}^{t} \langle h_{i}(s)\varphi_{i}(\cdot),g_{i,j}(s,\cdot) \rangle ds. \nonumber  
	\end{align} 
	Using a density argument, the following  is valid for $\Psi_{i}\in C^{1,2}_{c}\left(\mathbb{R} \times \mathbb{R}^{d} \right), \ i=1,2,$
	\begin{align}\label{n1}
	\langle \Psi_{i}(t,\cdot),v_{i}(t,\cdot)\rangle&=\langle\Psi_{i}(0,\cdot),v_{i}(0,\cdot)\rangle+\int_{0}^{t} \left\langle \frac{d}{ds}\Psi_{i}(s,\cdot)+\Delta_{\alpha}\Psi_{i}(s,\cdot),v_{i}(s,\cdot)\right \rangle ds\nonumber\\
	&\qquad +\int_{0}^{t} \langle \Psi_{i}(s,\cdot),g_{i,j}(s,\cdot) \rangle ds. 
	\end{align}
	Moreover, once again by a density argument, \eqref{n1} is valid for all $\Psi_{i}\in L^{1}\left([0,\tau] \times \mathbb{R}^{d} \right)$. For $\phi_{i}\in C_{c}^{2}(\mathbb{R}^{d})$, let $\Psi_{i},\ i=1,2$ be defined by
	\begin{align*}
	\Psi_{i}(s,x)=S_{t-s} \phi_{i}(x)=\left\{ \begin{array}{ccc}
	\langle p(t-s,\cdot-x),\phi_{i}(\cdot) \rangle, & \ \text{ if } \ \ s<t,\\\\
	\phi_{i}(x),& \ \text{ if } \ \ s=t.
	\end{array}\right.
	\end{align*}
	Since, $p(s,x)$ satisfies the Kolmogorov equation $\displaystyle\frac{d}{ds}p(s,x)=\Delta_{\alpha}p(s,x),$ we have 
	\begin{align}
	&\Delta_{\alpha}\Psi_{i}(s,x)+\frac{d}{ds}\Psi_{i}(s,x)\nonumber\\&=\Delta_{\alpha}\left( \int_{\mathbb{R}^{d}}p(t-s,y-x)\phi_{i}(y)dy \right)+\frac{d}{ds}\left( \int_{\mathbb{R}^{d}}p(t-s,y-x)\phi_{i}(y)dy \right)\nonumber\\
	&= \int_{\mathbb{R}^{d}} \left[\Delta_{\alpha}p(t-s,y-x)-\frac{d}{ds}p(t-s,y-x)\right]\phi_{i}(y)dy = 0. \nonumber   
	\end{align}
	Moreover, 
	\begin{align}
	\langle \Psi_{i}(0,\cdot),v_{i}(0,\cdot) \rangle&=\int_{\mathbb{R}^{d}} \Psi_{i}(0,x)v_{i}(0,x)dx=\int_{\mathbb{R}^{d}} f_{i}(x)\left( \int_{\mathbb{R}^{d}} p(t,y-x)\phi_{i}(y)dy \right)dx \nonumber\\ 
	&= \int_{\mathbb{R}^{d}} \phi_{i}(y)\left( \int_{\mathbb{R}^{d}} p(t,y-x)f_{i}(x)dx \right)dy \nonumber\\
	&=\int_{\mathbb{R}^{d}}\phi_{i}(y)\left( S_{t}f_{i} \right)(y)dy. \nonumber      
	\end{align}
	From \eqref{n1}, we have 
	\begin{align}
	\langle \phi_{i}(\cdot),v_{i}(t,\cdot) \rangle = \langle \phi_{i}(\cdot),S_{t}f_{i}(\cdot) \rangle+\int_{0}^{t} \langle \phi_{i}(\cdot),S_{t-s}g_{i,j}(s,\cdot) \rangle ds,\ i=1,2, \ j\in \left\{ 1,2 \right\} / \left\{i\right\}.\nonumber  
	\end{align}
    Since $v_{i}$ and $S_{t}v_{i}$ are locally integrable, and the  above equality holds for any $\phi_{i}\in C_{c}^{2}(\mathbb{R}^{d})$, it follows that $v_{i}$ solves \eqref{n3}. Hence $v=(v_1,v_2)^{\top}$ is a mild solution of \eqref{s1}.
		
	Conversely, assume that $v=(v_1,v_2)^{\top}$ is a mild solution of \eqref{s1}.   Let $\varphi_1,\varphi_2 \in C_{c}^{2}(\mathbb{R}^{d})$. Then  for any $t<\tau$, we have
	\begin{align}\label{n2}
	&\int_{0}^{t} \langle \varphi_{i}(\cdot),\Delta_{\alpha}v_{i}(s,\cdot) \rangle ds \nonumber\\&= \int_{0}^{t} \langle \Delta_{\alpha} \varphi_{i}(\cdot),v_{i}(s,\cdot) \rangle ds \nonumber\\
	&=\int_{0}^{t} \left\langle \Delta_{\alpha} \varphi_{i}(\cdot),S_{s}f_{i}(\cdot)+\int_{0}^{s} S_{s-r}g_{i,j}(r,\cdot)dr \right\rangle ds \nonumber\\
	&=\int_{0}^{t} \langle \Delta_{\alpha} \varphi_{i}(\cdot),S_{s}f_{i}(\cdot)\rangle ds + \int_{0}^{t}\left\langle \Delta_{\alpha} \varphi_{i}(\cdot), \int_{0}^{s} S_{s-r}g_{i,j}(r,\cdot)dr\right \rangle ds \nonumber\\
	&=\int_{0}^{t} \langle S_s\Delta_{\alpha} \varphi_{i}(\cdot),f_{i}(\cdot)\rangle ds + \int_{0}^{t}\left\langle \int_{r}^{t}S_{s-r}\Delta_{\alpha} \varphi_{i}(\cdot),g_{i,j}(r,\cdot)dr\right \rangle ds \nonumber\\
	&=\int_{0}^{t} \left\langle \frac{d}{ds}S_{s} \varphi_{i}(\cdot),f_{i}(\cdot)\right\rangle ds + \int_{0}^{t}\left\langle \int_{r}^{t}\frac{d}{ds}S_{s-r} \varphi_{i}(\cdot)ds,g_{i,j}(r,\cdot)\right\rangle dr \nonumber\\
	&= \langle S_{t} \varphi_{i}(\cdot),f_{i}(\cdot)\rangle-\langle \varphi_{i}(\cdot),f_{i}(\cdot)\rangle  + \int_{0}^{t}\langle S_{t-r} \varphi_{i}(\cdot),g_{i,j}(r,\cdot) \rangle dr-\int_{0}^{t}\langle \varphi_{i}(\cdot),g_{i,j}(r,\cdot) \rangle dr,
	\end{align} 
	for $i=1,2,\ j\in \left\{ 1,2 \right\} / \left\{i\right\}.$ Since $v=(v_1,v_2)^{\top}$ is a  mild solution of \eqref{s1}, we have 
	\begin{align}
	\langle \varphi_{i}(\cdot),v_{i}(t,\cdot) \rangle = \langle S_{t} \varphi_{i}(\cdot),f_{i}(\cdot) \rangle+\int_{0}^{t} \langle S_{t-s} \varphi_{i}(\cdot),g_{i,j}(s,\cdot) \rangle ds.\nonumber 
	\end{align} 
	From \eqref{n2}, we obtain
	\begin{align}
	\int_{0}^{t} \langle \varphi_{i}(\cdot),\Delta_{\alpha}v_{i}(s,\cdot) \rangle ds = \langle \varphi_{i}(\cdot),v_{i}(t,\cdot) \rangle-\langle \varphi_{i}(\cdot),f_{i}(\cdot)\rangle-\int_{0}^{t}\langle \varphi_{i}(\cdot),g_{i,j}(r,\cdot) \rangle dr. \nonumber
	\end{align}
	Therefore, 
	\begin{align}
	\langle \varphi_{i}(\cdot),v_{i}(t,\cdot) \rangle = \langle \varphi_{i}(\cdot),f_{i}(\cdot)\rangle+\int_{0}^{t} \langle \Delta_{\alpha} \varphi_{i}(\cdot),v_{i}(s,\cdot) \rangle ds +\int_{0}^{t}\langle \varphi_{i}(\cdot),g_{i,j}(r,\cdot) \rangle dr, \nonumber
	\end{align}
	for $i=1,2, \ j\in \left\{ 1,2 \right\} / \left\{i\right\}.$ Hence, $v=(v_1,v_2)^{\top}$ is a  weak solution of \eqref{s1}.
	\end{proof}
	
	
	Let $\tau$ be the blow-up time of the system \eqref{s1} with the initial values of the above form. Due to Theorem \ref{thm2.1} and a.s continuity of $B_{i}^{H}(\cdot), i=1,2$, $\tau$ is also the blow-up time for the system \eqref{b1} (Corollary 1 in \cite{dozfrac2010}).

    Next, we recall the following properties of $p(\cdot,\cdot)$  which will be useful for our further discussions (cf. Section 2 in \cite{sugi1975}).
    \begin{lemma}\label{l1}
    	For any $t>0,$ the function $p(t,\cdot)$ is continuous and strictly positive. Moreover, for any $t>0$ and $x\in \mathbb{R}^{d}$,
    	\begin{itemize}
    		\item [1.] $p(ts,x)=t^{-d/\alpha}p(s,t^{-1/\alpha}x),$ for any $s>0,$
    		\item [2.] $p(t,0) \geq p(t,x)$. More generally, $p(t,y) \geq p(t,x)$ if $\lVert x \rVert \geq \lVert y \rVert,$  
    		\item [3.] $p(t,x) \geq \left( \frac{s}{t}\right) ^{d/\alpha} p(s,x),$ for all $t \geq s,$
    		\item [4.] $p\left(t,\frac{1}{r}(x-y)\right) \geq p(t,x) p(t,y) , \ x, y \in \mathbb{R}^{d},$ provided that $p(t,0) \leq 1$ and $r \geq 2.$
    	\end{itemize}
    \end{lemma}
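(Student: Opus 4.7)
The plan is to base every item on the Fourier representation of the $\alpha$-stable density,
$p(t,x)=\frac{1}{(2\pi)^{d}}\int_{\mathbb{R}^{d}}e^{-i\xi\cdot x}e^{-t|\xi|^{\alpha}}d\xi$,
together with the subordination representation that writes $p(t,\cdot)$ as a mixture of centered Gaussian densities against some probability law $\mu_{t}$ on $(0,\infty)$. The Fourier formula and dominated convergence yield continuity and rotational symmetry, while the subordination formula yields strict positivity (the Gaussian density is everywhere positive and $\mu_{t}$ is a probability measure) and, because each Gaussian kernel is radially decreasing, the radial monotonicity of $p(t,\cdot)$. These are standard facts about symmetric stable semigroups; I would simply sketch them and then derive the four bullets from them.

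For item 1, the change of variable $\eta=t^{1/\alpha}\xi$ in the Fourier integral contributes a Jacobian $t^{-d/\alpha}$ and converts $e^{-ts|\xi|^{\alpha}}$ into $e^{-s|\eta|^{\alpha}}$ and $\xi\cdot x$ into $\eta\cdot(t^{-1/\alpha}x)$, giving $p(ts,x)=t^{-d/\alpha}p(s,t^{-1/\alpha}x)$. For item 2, since $e^{-t|\xi|^{\alpha}}$ is even and radial, the representation reduces to $p(t,x)=(2\pi)^{-d}\int\cos(\xi\cdot x)e^{-t|\xi|^{\alpha}}d\xi$, from which $p(t,x)\leq p(t,0)$ is immediate; the finer radial statement $\|x\|\geq\|y\|\Rightarrow p(t,y)\geq p(t,x)$ I would read off the Gaussian mixture. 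Item 3 is then a short corollary of items 1 and 2: writing $t=(t/s)\cdot s$ with $t\geq s$, item 1 gives $p(t,x)=(s/t)^{d/\alpha}p(s,(s/t)^{1/\alpha}x)$, and since $(s/t)^{1/\alpha}\leq 1$ implies $\|(s/t)^{1/\alpha}x\|\leq\|x\|$, item 2 yields $p(s,(s/t)^{1/\alpha}x)\geq p(s,x)$.

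The main obstacle is item 4, but once radial monotonicity is in hand it is also essentially elementary. The triangle inequality and $r\geq 2$ give $\|\tfrac{1}{r}(x-y)\|\leq\tfrac{1}{r}(\|x\|+\|y\|)\leq\tfrac{1}{2}(\|x\|+\|y\|)\leq\max(\|x\|,\|y\|)$. Without loss of generality assume $\|x\|\geq\|y\|$; then item 2 forces $p(t,\tfrac{1}{r}(x-y))\geq p(t,x)$. Combining this with the hypothesis $p(t,0)\leq 1$, which via item 2 gives $p(t,y)\leq 1$, I get $p(t,x)p(t,y)\leq p(t,x)\leq p(t,\tfrac{1}{r}(x-y))$. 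Throughout, the only non-trivial input is radial monotonicity, and that is exactly the classical content of Sugitani's paper cited in the text.
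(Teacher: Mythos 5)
Your proof is correct. Note that the paper itself offers no proof of this lemma at all --- it simply recalls the statement with a pointer to Section~2 of Sugitani's 1975 paper --- so there is nothing internal to compare against; your argument (scaling of the Fourier integral for item~1, radial monotonicity for item~2, and items~3 and~4 as elementary corollaries of 1 and 2 together with $p(t,y)\leq p(t,0)\leq 1$) is essentially the standard one found in that reference. The only genuinely nontrivial input is the radial monotonicity of $p(t,\cdot)$, and your subordination argument (writing $p(t,\cdot)$ as a mixture of centered Gaussians over an $\alpha/2$-stable subordinator law) handles it cleanly for $0<\alpha<2$, with the case $\alpha=2$ being the Gaussian itself; the triangle-inequality reduction $\bigl\lVert \tfrac{1}{r}(x-y)\bigr\rVert \leq \max(\lVert x\rVert,\lVert y\rVert)$ for $r\geq 2$ in item~4 is exactly right.
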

    In the sequel, $r_{0}>0$ denotes a fixed constant such that $p(r_{0},0)=r_{0}^{-d/\alpha}p(1,0)<1.$

	\section{A Lower Bound for $\tau$}\label{sec3}
	This section aims to find the lower bounds $\tau_{\ast}$ to the blow-up times such that $\tau_{\ast}\leq \tau$ when $\gamma_{1}=\gamma_{2}=\lambda.$ Hence $T_{1}(t)=T_{2}(t)=T(t)\ {\rm (say)}$  defined in \eqref{op1} and $\frac{1}{2}<H<1$. First, we consider the equation (\ref{b1}) with parameters $0<\alpha \leq 2$,  $\beta_{1}\geq \beta_{2}>0,\ \gamma_{i}>0$ and $k_{ij}\geq 0, i,j=1,2$ are real constants, and 
	\begin{equation}\label{a4}
	\begin{aligned} 
    (1+\beta_{1})k_{21}-k_{11}&=(1+\beta_{2})k_{11}-k_{21} =:\rho_{1},\\
	(1+\beta_{1})k_{22}-k_{12}&=(1+\beta_{2})k_{12}-k_{22}=:\rho_{2}.
	\end{aligned}
	\end{equation}
	The following result provides the lower bounds for the finite-time blow-up solution of the system \eqref{b1}.
	\begin{theorem}\label{t2} 
	Assume that the conditions (\ref{a4}) holds and $\frac{1}{2} < H<1.$ If the initial values are of the form $f_{1}=C_{1}\psi, f_{2}=C_{2}\psi,$	where $C_{1}$ and $C_{2}$ are any positive constants with $C_{1}\leq C_{2}$ and $\psi \in C_{c}^{\infty}({\mathbb R}^d).$ Then $\tau_{\ast}\leq\tau$, where $\tau_{\ast}$ is given by
	\begin{align}\label{t1}
	\tau_{\ast} = \inf \Bigg\{ t\geq 0 : &\int_{0}^{t} \exp\{\rho_{1} B^{H}_{1}(r)+\rho_{2}B^{H}_{2}(r)+\lambda \beta_{1}r\}r^{-d \beta_{1}/\alpha}dr 
	\geq \frac{1}{\beta_{1}p(1,0)^{\beta_{1}}C_{1}^{\beta_{1}}\left\|\psi \right\|_{\infty}^{\beta_{1}}}, \nonumber\\
(or) &\int_{0}^{t} \exp\{\rho_{1} B^{H}_{1}(r)+\rho_{2}B^{H}_{2}(r)+\lambda \beta_{2}r\}r^{-d \beta_{2}/\alpha}dr 
	\geq \frac{1}{\beta_{2}p(1,0)^{\beta_{2}}C_{2}^{\beta_{2}}\left\| \psi \right\|_{\infty}^{\beta_{2}}}	\Bigg\}. 
	\end{align}
	\begin{proof}
	Let $v=(v_{1},v_{2})^{\top}$ solve \eqref{e3}. Then, we have
	\begin{align}
	v_{1}(t,x)&=T(t)f_{1}(x)+\int_{0}^{t}T(t-r) \left( \exp\{{\rho_{1} B^{H}_{1}(r)+\rho_{2}B^{H}_{2}(r)}\}v^{1+\beta_{1}}_{2}(r,x)\right)dr,\nonumber\\
	v_{2}(t,x)&=T(t)f_{2}(x)+\int_{0}^{t}T(t-r) \left( \exp\{{\rho_{1} B^{H}_{1}(r)+\rho_{2}B^{H}_{2}(r)}\}v^{1+\beta_{2}}_{1}(r,x)\right)dr.\nonumber
	\end{align}
	For all $x\in \mathbb{R}^{d},  \ t\geq 0,$ let us define the operators $\mathcal{J}_{1}$ and $\mathcal{J}_{2}$ as follows:
	\begin{align}
	\mathcal{J}_{1}v(t,x)&=T(t)f_{1}(x)+\int_{0}^{t} \exp\{{\rho_{1} B^{H}_{1}(r)+\rho_{2} B^{H}_{2}(r)}\}\left(T(t-r)v\right)^{1+\beta_{1}}dr,\nonumber\\
	\mathcal{J}_{2} v(t,x)&=T(t)f_{2}(x)+\int_{0}^{t}\exp\{{\rho_{1} B^{H}_{1}(r)+\rho_{2} B^{H}_{2}(r)}\}\left(T(t-r)v\right)^{1+\beta_{2}}dr,\nonumber
	\end{align}
	where $v$ is any non-negative, bounded and measurable function. 
	
	First we shall prove that 
	\begin{eqnarray}
	v_{1}(t,x)=\mathcal{J}_{1} v_{2}(t,x), \  v_{2}(t,x)=\mathcal{J}_{2}v_{1}(t,x), \ \ x\in D, \ \ 0 \leq t<\tau_{*},\nonumber
	\end{eqnarray}
	for some non-negative, bounded and measurable functions $v_1$ and $v_2$. Moreover, on the set $t< \tau_{*},$ we set
	\begin{align}
	\mathscr{G}_{1}(t)&=\left[ 1-\beta_{1} \int_{0}^{t}  \exp\{\rho_{1} B^{H}_{1}(r)+\rho_{2} B^{H}_{2}(r)\}\left\| T(r)f_{1} \right\|_{\infty}^{\beta_{1}}dr\right]^{\frac{-1}{\beta_{1}}},\nonumber\\
	\mathscr{G}_{2}(t)&=\left[ 1- \beta_{2} \int_{0}^{t}  \exp\{\rho_{1} B^{H}_{1}(r)+\rho_{2} B^{H}_{2}(r)\}\left\| T(r)f_{2} \right\|_{\infty}^{\beta_{2}}dr\right]^{\frac{-1}{\beta_{2}}}.\nonumber
	\end{align}
	By \eqref{t1}, it is immediate that $\mathscr{G}_{1}(t)$ and $\mathscr{G}_{2}(t)$ are well defined. Then, it can be easily seen that 
	\begin{eqnarray}
	\frac{d \mathscr{G}_{1}(t)}{dt}=\exp\{\rho_{1} B^{H}_{1}(t)+\rho_{2} B^{H}_{2}(t)\}\left\| T(t)f_{1} \right\|_{\infty}^{\beta_{1}} \mathscr{G}^{1+\beta_{1}}_{1}(t), \ \ \mathscr{G}_{1}(0)=1, \nonumber
	\end{eqnarray}
	so that 
	\begin{eqnarray}
	\mathscr{G}_{1}(t)=1+\int_{0}^{t} \exp\{\rho_{1} B^{H}_{1}(r)+\rho_{2} B^{H}_{2}(r)\}  \left\| T(r)f_{1} \right\|_{\infty}^{\beta_{1}} \mathscr{G}^{1+\beta_{1}}_{1}(r) dr. \nonumber
	\end{eqnarray}
	Similarly, we have 
	\begin{eqnarray}
	\mathscr{G}_{2}(t)=1+\int_{0}^{t} \exp\{\rho_{1} B^{H}_{1}(r)+\rho_{2} B^{H}_{2}(r)\}  \left\| T(r)f_{2} \right\|_{\infty}^{\beta_{2}} \mathscr{G}^{1+\beta_{2}}_{2}(r) dr. \nonumber	
	\end{eqnarray}
	Let us choose $v\geq 0$ such that $$v(t,x)\leq T(t)f_{1}(x)\mathscr{G}_{1}(t),$$ for $x\in \mathbb{R}^{d}$ and $t<\tau_{*}.$ Then $T(t)f_{1}(x)\leq \mathcal{J}_{1} v(t,x)$ and
	\begin{align}
	\mathcal{J}_{1}v(t,x)&=T(t)f_{1}(x)+\int_{0}^{t}\exp\{{\rho_{1} B^{H}_{1}(r)+\rho_{2} B^{H}_{2}(r)}\}\left( T(t-r)v\right)^{1+\beta_{1}}dr\nonumber\\
	&\leq T(t)f_{1}(x)+\int_{0}^{t}\exp\{{\rho_{1} B^{H}_{1}(r)+\rho_{2} B^{H}_{2}(r)}\}\nonumber\\ 
	& \qquad\times\left[\mathscr{G}^{1+\beta_{1}}_{1}(r) \left\|T(r)f_{1}\right\|_{\infty}^{\beta_{1}}T(t-r)(T(r)f_{1}(x)) \right]dr \nonumber\\
	&=T(t)f_{1}(x)+\int_{0}^{t}\exp\{{\rho_{1} B^{H}_{1}(r)+\rho_{2} B^{H}_{2}(r)}\}\nonumber\\ 
	& \qquad\times\left[\mathscr{G}^{1+\beta_{1}}_{1}(r) \left\|T(r)f_{1}\right\|_{\infty}^{\beta_{1}}T(t)f_{1}(x) \right]dr \nonumber\\
	&= T(t)f_{1}(x)\Bigg\{ 1+ \int_{0}^{t} \exp\{{\rho_{1} B^{H}_{1}(r)+\rho_{2} B^{H}_{2}(r)}\}\left\| T(r)f_{1} \right\|_{\infty}^{\beta_{1}} \mathscr{G}^{1+\beta_{1}}_{1}(r)dr\Bigg\} \nonumber\\
	&= T(t)f_{1}(x)\mathscr{G}_{1}(t).\nonumber
	\end{align}
	Thus, we have
	\begin{eqnarray}\label{32}
	T(t)f_{1}(x)\leq \mathcal{J}_{1} v(t,x)  \leq T(t)f_{1}(x)\mathscr{G}_{1}(t).	
	\end{eqnarray}
	Similarly, we get 
	\begin{eqnarray}\label{33}
	T(t)f_{2}(x)\leq \mathcal{J}_{2} w(t,x)\leq T(t)f_{2}(x) \mathscr{G}_{2}(t), 
	\end{eqnarray}
	for all $w$ such that $0\leq w(t,x)\leq T(t)f_{2}(x)\mathscr{G}_{2}(t).$
	
	 Let us take,  
	\begin{eqnarray}\label{in1}
	u^{(0)}_{1}(t,x)=T(t)f_{1}(x), \ \ u^{(0)}_{2}(t,x)=\frac{C_{1}}{C_{2}}T(t)f_{2}(x), 
	\end{eqnarray}  
	and
	\begin{align}\label{in2}
	u^{(n)}_{1}(t,x)=\mathcal{J}_{1} u^{(n-1)}_{2}(t,x), \ \ u^{(n)}_{2}(t,x)=\mathcal{J}_{2} u^{(n-1)}_{1}(t,x), \ \ n\geq1, 
	\end{align}
	for $x\in \mathbb{R}^{d},\ 0\leq t < \tau_{*}.$ 
		
	Our aim is to show that the sequences of functions $\{ u^{(n)}_{1} \} \ \mbox{and} \ \{u^{(n)}_{2}\}$ are monotonically increasing. Let us consider
	\begin{align}
	u^{(0)}_{1}(t,x)&\leq T(t)f_{1}(x)+\int_{0}^{t} \exp\{{\rho_{1} B^{H}_{1}(r)+\rho_{2} B^{H}_{2}(r)}\} \left( T(t-r)u^{(0)}_{2}(r,x) \right)^{1+\beta_{1}}dr\nonumber\\
	&= \mathcal{J}_{1}u^{(0)}_{2}(t,x) = u^{(1)}_{1}(t,x).\nonumber
	\end{align}
	Now assume that $u^{(n)}_{i}\geq u^{(n-1)}_{i}, \ i=1,2,$ for some $n \geq 1.$  Then the monotonicity of $\mathcal{J}_{1}$  leads to the inequality
	\begin{eqnarray}
	u^{(n+1)}_{1}=\mathcal{J}_{1}  u^{(n)}_{2}(t,x)\geq \mathcal{J}_{1} u^{(n-1)}_{2}(t,x)=u^{(n)}_{1},\nonumber
	\end{eqnarray}
	and   the monotonicity of $\mathcal{J}_{2}$ results in 
	\begin{eqnarray}
	u^{(n+1)}_{2}=\mathcal{J}_{2}  u^{(n)}_{1}(t,x)\geq \mathcal{J}_{2} u^{(n-1)}_{1}(t,x)=u^{(n)}_{2}.\nonumber
	\end{eqnarray}
    Therefore the limits 
	\begin{eqnarray}
	v_{1}(t,x)=\lim_{n\rightarrow \infty}u^{(n)}_{1}(t,x), \ \ v_{2}(t,x)=\lim_{n\rightarrow \infty}u^{(n)}_{2}(t,x), \nonumber
	\end{eqnarray}
	exist for $x\in \mathbb{R}^{d}$ and $0 \leq t< \tau_{*}.$ Then by the monotone convergence theorem, we obtain 
	\begin{eqnarray}
	v_{1}(t,x)=\mathcal{J}_{1} v_{2}(t,x), \  v_{2}(t,x)=\mathcal{J}_{2} v_{1}(t,x), \ \ x\in \mathbb{R}^{d}, \ \ 0 \leq t<\tau_{*}.\nonumber
	\end{eqnarray}
	Since $f_{1}=C_{1} \psi$ and $f_{2}=C_{2} \psi,$ for some positive constants $C_{1}$ and $C_{2}$ with $C_{1} \leq C_{2}$, from \eqref{in1}, it follows that
		\begin{align}\label{eq2}
		\left\{
		\begin{aligned}
		0 &\leq u_{1}^{(0)}(t,x)=T(t)\left(\frac{C_{1}}{C_{2}} f_{2}(x)\right)=\frac{C_{1}}{C_{2}}T(t)f_{2}(x)\leq T(t)f_{2}(x) \mathscr{G}_{2}(t), \\
		0 &\leq \frac{C_{2}}{C_{1}} u_{2}^{(0)}(t,x)=T(t)\left(\frac{C_{2}}{C_{1}} f_{1}(x)\right)=\frac{C_{2}}{C_{1}}T(t) f_{1}(x), \\
		&u_{2}^{(0)}(t,x)=T(t)f_{1}(x)\leq T(t)f_{1}(x) \mathscr{G}_{1}(t).  
		\end{aligned}
		\right.   
		\end{align}
		From \eqref{in2}, we infer
		\begin{align}\label{eq3}
		u^{(1)}_{1}(t,x)&=\mathcal{J}_{1}u_{2}^{(0)}(t,x), 
		\end{align}
		so that  \eqref{eq2} gives 
		$u_{2}^{(0)}(t,x) \leq T(t) f_{1}(x)\mathscr{G}_{1}(t). $  	 
		Therefore from \eqref{eq3}, we obtain 
		\begin{align}\label{2}
		T(t)f_{1}(x) \leq u^{(1)}_{1}(t,x) \leq  T(t)f_{1}(x) \mathscr{G}_{1}(t).
		\end{align}
		Similarly, from \eqref{in2}, we deduce 
		\begin{align}\label{b11}
		u^{(1)}_{2}(t,x)=\mathcal{J}_{2}u^{(0)}_{1}(t,x), 
		\end{align}
		so that from \eqref{eq2}, we get  $u_{1}^{(0)}(t,x) \leq T(t)f_{2}(x)\mathscr{G}_{2}(t)$.		
		Therefore from \eqref{b11}, we have
		\begin{align}
		T(t)S_{t}f_{2}(x) \leq u^{(1)}_{2}(t,x) \leq  T(t)f_{2}(x) \mathscr{G}_{2}(t). \nonumber 
		\end{align}
		Note that \eqref{in2} leads to 
		\begin{align}\label{3}
		u^{(2)}_{1}(t,x)&=\mathcal{J}_{1}u^{(1)}_{2}(t,x), 
		\end{align}
		where $u^{(1)}_{2}(t,x)=\mathcal{J}_{2}u^{(0)}_{1}(t,x)$.
		We know that $ u_{1}^{(0)}(t,x) \leq T(t)f_{1}(x) \mathscr{G}_{1}(t)$, 
		so that 
		\begin{align}\label{9}
		u^{(1)}_{2}(t,x) \leq T(t)f_{1}(x)\mathscr{G}_{1}(t).
		\end{align}
		By using \eqref{9} in \eqref{3}, we have
		\begin{align}\label{4}
		T(t)f_{1}(x)\leq u^{(2)}_{1}(t,x) \leq T(t)f_{1}(x)\mathscr{G}_{1}(t).
		\end{align}
		Also  \eqref{in2} results in 
		\begin{align}\label{5}
		u^{(2)}_{2}(t,x)=\mathcal{J}_{2}u^{(1)}_{1}(t,x), 
		\end{align}
		where $u^{(1)}_{1}(t,x)=\mathcal{J}_{1}u^{(0)}_{1}(t,x)$. From \eqref{eq2} we know that $u^{(0)}_{1}(t,x) \leq T(t)f_{2}(x)\mathscr{G}_{2}(t)$.		
		Therefore $0 \leq u^{(1)}_{1}(t,x) \leq T(t)f_{2}(x) \mathscr{G}_{2}(t).$ 	
		From \eqref{5}, we obtain
		\begin{align}\label{6}
		T(t)f_{1}(x)\leq u^{(2)}_{2}(t,x) \leq T(t)f_{2}(x)\mathscr{G}_{2}(t).
		\end{align}
		Continuing like this,  for each $n \geq 1$,  one gets 
		\begin{equation*}
			\left\{
		\begin{aligned}
		T(t)f_{1}(x)&\leq u^{(n)}_{1}(t,x) \leq T(t)f_{1}(x)\mathscr{G}_{1}(t), \nonumber\\
		T(t)f_{2}(x)&\leq u^{(n)}_{2}(t,x) \leq T(t)f_{2}(x)\mathscr{G}_{2}(t). \nonumber
		\end{aligned}
	\right.
\end{equation*}
	Moreover,  we deduce 
	\begin{align}\label{v11}
	\left\{
	\begin{aligned}
	v_{1}(t,x) &\leq \frac{T(t)f_{1}(x)}{\left[ 1-\beta_{1}\int_{0}^{t}  \exp\{\rho_{1} B^{H}_{1}(r)+\rho_{2} B^{H}_{2}(r)\}\left\| T(r)f_{1} \right\|_{\infty}^{\beta_{1}}dr\right]^{\frac{1}{\beta_{1}}}}, \\
	v_{2}(t,x) &\leq \frac{T(t)f_{2}(x)}{\left[ 1-\beta_{2}\int_{0}^{t}  \exp\{\rho_{1} B^{H}_{1}(r)+\rho_{2} B^{H}_{2}(r)\}\left\| T(r)f_{2} \right\|_{\infty}^{\beta_{2}}dr\right]^{\frac{1}{\beta_{2}}}}.
	\end{aligned}
	\right. 	
	\end{align}
	By Lemma \ref{l1} (1), we have
	\begin{align}
	\left\|T(r)f_{i} \right\|_{\infty}^{\beta_{i}}=\left\|e^{\lambda r}S_{r}f_{i} \right\|_{\infty}^{\beta_{i}}&=e^{\lambda \beta_{i} r}\left( \sup_{x \in \mathbb{R}^{d}} \int_{\mathbb{R}^{d}}p(r,y-x)f_{i}(y)dy\right)^{\beta_{i}}\nonumber\\
	&=e^{\lambda \beta_{i} r}r^{-d \beta_{i}/\alpha} \left( \sup_{x \in \mathbb{R}^{d}} \int_{\mathbb{R}^{d}}p(1,r^{-1/\alpha}(y-x))f_{i}(y)dy\right)^{\beta_{i}}\nonumber\\
	&= e^{\lambda \beta_{i} r} r^{-d \beta_{i}/\alpha} p(1,0)^{\beta_{i}}\left\| f_{i} \right\|_{\infty}^{\beta_{i}}, \nonumber 
	\end{align}
	for $i=1,2$. Therefore,
	\begin{align}
	&\int_{0}^{t} \exp\{\rho_{1} B^{H}_{1}(r)+\rho_{2}B^{H}_{2}(r)\}\left\|T(r)f_{i} \right\|_{\infty}^{\beta_{i}}dr \nonumber\\
	&\qquad \qquad = p(1,0)^{\beta_{i}}\left\| f_{i} \right\|_{\infty}^{\beta_{i}}\int_{0}^{t} \exp\{\rho_{1} B^{H}_{1}(r)+\rho_{2}B^{H}_{2}(r)+\lambda \beta_{i}r\}r^{-d \beta_{i}/\alpha} dr.  \nonumber
	\end{align}
	From \eqref{v11}, we have
	\begin{align}
	v_{1}(t,x) &\leq \frac{T(t)f_{1}(x)}{\left[ 1-\beta_{1}p(1,0)^{\beta_{1}}\left\| f_{1} \right\|_{\infty}^{\beta_{1}}\int_{0}^{t}  \exp\{\rho_{1} B^{H}_{1}(r)+\rho_{2} B^{H}_{2}(r)+\lambda \beta_{1}r\}r^{-d \beta_{1}/\alpha}dr\right]^{\frac{1}{\beta_{1}}}}, \nonumber \\
	v_{2}(t,x) &\leq \frac{T(t)f_{2}(x)}{\left[ 1-\beta_{2}p(1,0)^{\beta_{2}}\left\| f_{2} \right\|_{\infty}^{\beta_{2}}\int_{0}^{t}  \exp\{\rho_{1} B^{H}_{1}(r)+\rho_{2} B^{H}_{2}(r)+\lambda \beta_{2}r\}r^{-d \beta_{2}/\alpha}dr\right]^{\frac{1}{\beta_{2}}}}. \nonumber 
	\end{align}
	By the choice of initial values, one can complete the proof.
	\end{proof}
    \end{theorem}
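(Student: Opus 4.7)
The plan is to reduce the blow-up analysis for \eqref{b1} to that of the associated random PDE system \eqref{s1}: by Theorem \ref{thm2.1} and the pathwise positivity and continuity of the random transformation, the explosion times of $u$ and $v$ coincide, so it suffices to show that $v=(v_1,v_2)^{\top}$ cannot blow up before $\tau_\ast$. The key algebraic simplification comes from \eqref{a4}: the prefactors $\exp\{-k_{i1}B_1^H(t)-k_{i2}B_2^H(t)\}(\exp\{k_{j1}B_1^H(t)+k_{j2}B_2^H(t)\})^{1+\beta_i}$ collapse to the common factor $\exp\{\rho_1 B_1^H(t)+\rho_2 B_2^H(t)\}$ on both equations, which is what makes a scalar envelope bound possible.

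Next I would set up a monotone Picard iteration in the mild formulation \eqref{e3}, introducing operators $\mathcal{J}_1,\mathcal{J}_2$ that map pairs of non-negative bounded measurable functions into the right-hand side of the mild equation with the exponential factor pulled outside $T(t-r)$ (legitimate by \eqref{a4} and positivity of $T(t)$). Starting with $u_1^{(0)}=T(t)f_1$ and $u_2^{(0)}=(C_1/C_2)T(t)f_2$ --- the rescaling by $C_1/C_2\leq 1$ being chosen so that the asymmetric cross-couplings seed a monotone sequence --- I would show by induction that the iterates $\{u_i^{(n)}\}$ are monotone non-decreasing and sandwiched as
\begin{equation*}
T(t)f_i(x)\ \leq\ u_i^{(n)}(t,x)\ \leq\ T(t)f_i(x)\,\mathscr{G}_i(t),
\end{equation*}
where $\mathscr{G}_i$ solves the scalar ODE
\begin{equation*}
\mathscr{G}_i'(t)=\exp\{\rho_1 B_1^H(t)+\rho_2 B_2^H(t)\}\,\|T(t)f_i\|_\infty^{\beta_i}\,\mathscr{G}_i(t)^{1+\beta_i},\qquad \mathscr{G}_i(0)=1.
\end{equation*}
By monotone convergence the iterates converge to functions $v_1,v_2$ satisfying $v_1=\mathcal{J}_1 v_2$, $v_2=\mathcal{J}_2 v_1$, and inheriting the envelope bound.

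The final step is to make the envelope explicit. Using Lemma \ref{l1}(1), $\|T(r)f_i\|_\infty^{\beta_i}=e^{\lambda\beta_i r}r^{-d\beta_i/\alpha}p(1,0)^{\beta_i}\|f_i\|_\infty^{\beta_i}$, and the closed form
\begin{equation*}
\mathscr{G}_i(t)=\left[1-\beta_i\int_0^t\exp\{\rho_1 B_1^H(r)+\rho_2 B_2^H(r)\}\,\|T(r)f_i\|_\infty^{\beta_i}\,dr\right]^{-1/\beta_i},
\end{equation*}
gives, after substituting $\|f_i\|_\infty=C_i\|\psi\|_\infty$, precisely the stopping-time condition \eqref{t1}: the envelope is finite exactly on $\{t<\tau_\ast\}$. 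Hence $v$ is uniformly bounded on every interval $[0,t]$ with $t<\tau_\ast$, so $\tau\geq\tau_\ast$.

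The main obstacle I anticipate is the monotone sandwich step. Because the system is genuinely cross-coupled ($v_1$ driven by $v_2^{1+\beta_1}$ and vice versa), the naive iteration starting from zero need not be monotone; one must seed the iteration with the pair $(T(t)f_1,(C_1/C_2)T(t)f_2)$ and then use $C_1\leq C_2$ to verify that the first step lies above the seed while simultaneously not exceeding the envelope of the \emph{other} coordinate. Carefully propagating both sides of the sandwich through $\mathcal{J}_1$ and $\mathcal{J}_2$, and confirming that the exponents $1+\beta_1,1+\beta_2$ interact correctly with the two distinct envelopes $\mathscr{G}_1,\mathscr{G}_2$, is the delicate book-keeping that drives the proof.
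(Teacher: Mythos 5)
Your proposal is correct and follows essentially the same route as the paper: the same random transformation to reduce to the system \eqref{s1}, the same operators $\mathcal{J}_1,\mathcal{J}_2$ with the exponential prefactor collapsing via \eqref{a4}, the same monotone iteration seeded by $\bigl(T(t)f_1,(C_1/C_2)T(t)f_2\bigr)$ with the sandwich bounds $T(t)f_i\leq u_i^{(n)}\leq T(t)f_i\,\mathscr{G}_i(t)$, and the same explicit evaluation of $\|T(r)f_i\|_\infty^{\beta_i}$ via Lemma \ref{l1}(1) to produce the stopping time \eqref{t1}. The delicate book-keeping you flag in the cross-coupled sandwich step is exactly where the paper's proof spends its effort as well.
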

    	
	
	\begin{Rem}
	Since the initial data $f_1$ and $f_2$ are non-negative, the weak solutions $u=(u_1,u_2)^{\top}$ of \eqref{b1} and $v=(v_1,v_2)^{\top}$ of \eqref{s1} are non-negative as well. 
	\end{Rem}
	
	\section{Finite-time explosion of positive solutions}\label{sec4}
	In this section, we obtain  upper bounds for the blow-up time of the system \eqref{b1} with parameters  given in \eqref{a4}. Also we find upper bounds for the probability of non-explosive positive solution $u=(u_1,u_2)^{\top}$ of the system \eqref{b1} with $\frac{1}{2}<H<1$.
	\begin{lemma} \label{lem1}
	Suppose $v=(v_1,v_2)^{\top}$ is the mild solution of \eqref{s1}, $$ m_{i}(t)=\mathbb{E}\left[ v_{i}(t,X_{t})\right]=\int_{\mathbb{R}^{d}} p(t,y)v_{i}(t,y) dy=S_tv_{i}(t,0), \ \ i=1,2,$$
	where $\left\lbrace X_{t}, t \geq 0 \right\rbrace$ is a spherically symmetric $\alpha$-stable process in $\mathbb{R}^{d}$ starting from $0,$ and \eqref{a4} holds. For $\omega \in \Omega,$ if $m_{i}$ blows-up in a finite-time $\tau_{m_{i}}=\tau_{m_{i}}(\omega), \ i=1,2,$ then $v=(v_1,v_2)^{\top}$ blows-up in a finite-time $\tau_{v}$. Moreover,
	\begin{align}\label{n4}
	\tau_{v}=\min\{\tau_{v_1},\tau_{v_2}\} \leq \min \left\lbrace  (r_{0}+\tau_{m_{1}})(1+2^{\alpha}), (r_{0}+\tau_{m_{2}})(1+2^{\alpha}) \right\rbrace .
	\end{align}  
	\end{lemma}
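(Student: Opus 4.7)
The plan is to show the contrapositive: assume that $v=(v_1,v_2)^{\top}$ does not blow up before time $T:=(1+2^{\alpha})(r_{0}+\tau_{m_{i}})$, and derive a contradiction with $m_{i}$ blowing up at $\tau_{m_{i}}$.

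The starting point is the mild formulation \eqref{n3} for $v_{i}$. Under the parameter hypothesis \eqref{a4} the exponential prefactor of the nonlinear term collapses to $e^{\rho_{1}B_{1}^{H}(r)+\rho_{2}B_{2}^{H}(r)}$. Applying $S_{t}$ to both sides as a function of the spatial variable and evaluating at the origin, and using the semigroup property $S_{t}\circ S_{t-r}=S_{2t-r}$, I would obtain the representation
\begin{align*}
m_{i}(t) &= S_{2t}f_{i}(0) + \gamma_{i}\int_{0}^{t} S_{2t-r} v_{i}(r,\cdot)(0)\,dr \\
&\qquad + \int_{0}^{t} e^{\rho_{1}B_{1}^{H}(r)+\rho_{2}B_{2}^{H}(r)}\, S_{2t-r} v_{j}(r,\cdot)^{1+\beta_{i}}(0)\,dr.
\end{align*}

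Next, since $p(2t-r,\cdot)$ is a probability density, Jensen's inequality applied to the convex map $z\mapsto z^{1+\beta_{i}}$ gives
\[
S_{2t-r} v_{j}(r,\cdot)^{1+\beta_{i}}(0) \;\geq\; \bigl(S_{2t-r} v_{j}(r,\cdot)(0)\bigr)^{1+\beta_{i}}.
\]
To compare $S_{2t-r} v_{j}(r,\cdot)(0)=\int p(2t-r,y)v_{j}(r,y)\,dy$ with $m_{j}(r)=\int p(r,y)v_{j}(r,y)\,dy$, I would invoke Lemma \ref{l1}: self-similarity (property 1) together with property (4) applied with the parameter $r=2$, which requires $p(\cdot,0)\leq 1$ and hence that the time argument exceeds $r_{0}$. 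This yields a constant lower bound of the form $S_{2t-r} v_{j}(r,\cdot)(0)\geq c_{\alpha}\,m_{j}(r)$ valid on suitable ranges of $t,r$. Substituting back produces a coupled integral inequality
\[
m_{i}(t) \;\geq\; c'_{\alpha}\int_{0}^{t} e^{\rho_{1}B_{1}^{H}(r)+\rho_{2}B_{2}^{H}(r)}\,m_{j}(r)^{1+\beta_{i}}\,dr,
\]
together with its counterpart obtained by swapping $i$ and $j$. The factor $1+2^{\alpha}$ enters because the spatial dilation by $2$ in property (4) corresponds, under the self-similarity $p(2^{\alpha}s,y)=2^{-d}p(s,y/2)$, to a time dilation by $2^{\alpha}$; the additive shift $r_{0}$ accounts for the smallest time beyond which the normalization $p(\cdot,0)\leq 1$ is available.

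Finally, feeding the finite-time blow-up of $m_{i}$ at $\tau_{m_{i}}$ into the above coupled inequality forces the mild system to blow up at some time no later than $(1+2^{\alpha})(r_{0}+\tau_{m_{i}})$, and applying the same reasoning with the roles of $i=1$ and $i=2$ exchanged yields the minimum in \eqref{n4}. The main obstacle I anticipate is the third step: making the quantitative comparison $S_{2t-r}v_{j}(r,\cdot)(0)\geq c_{\alpha} m_{j}(r)$ sharp enough that the factor $(1+2^{\alpha})$ and the shift $r_{0}$ emerge exactly, rather than some looser bound. A fallback strategy, should the direct computation prove delicate, is to split the time interval $[0,t]$ into an initial segment of length $\leq r_{0}$ (where property (4) cannot be used and the contribution is dropped as being nonnegative) and a main segment on which the rescaling argument applies cleanly, then optimize the choice of the splitting point.
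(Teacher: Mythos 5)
Your outline reuses the moment computation that the paper performs later (in the proof of Theorem \ref{thm1}) rather than the argument that proves Lemma \ref{lem1}, and in doing so it leaves the essential step unproved. The coupled inequality you arrive at, $m_{i}(t)\geq c'_{\alpha}\int_{0}^{t}e^{\rho_{1}B^{H}_{1}(r)+\rho_{2}B^{H}_{2}(r)}m_{j}(r)^{1+\beta_{i}}\,dr$, relates the two moment functions to each other; it says nothing directly about $v$. The content of the lemma is precisely the transfer from ``$m_{j}$ explodes'' to ``$v_{i}$ explodes'', and your final sentence (``feeding the finite-time blow-up of $m_{i}$ \dots forces the mild system to blow up no later than $(1+2^{\alpha})(r_{0}+\tau_{m_{i}})$'') asserts this transfer without an argument; in particular the constants $r_{0}$ and $1+2^{\alpha}$ never actually emerge from your computation, as you yourself flag. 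The paper instead works pointwise with $v_{i}(r_{0}+t,x)$: for $t\geq\tau_{m_{j}}+2^{\alpha}(r_{0}+\tau_{m_{j}})$ and $s\in[0,\tau_{m_{j}}]$ one has $\rho:=\left((t-s)/(r_{0}+s)\right)^{1/\alpha}\geq 2$, so the scaling property (1) and the factorization property (4) of Lemma \ref{l1} give $p(t-s,y-x)\geq\rho^{-d}p(r_{0}+s,x)\,p(r_{0}+s,y)$; inserting this and Jensen's inequality into the Duhamel formula yields $v_{i}(r_{0}+t,x)\geq\rho^{-d}\int_{r_{0}}^{r_{0}+\tau_{m_{j}}}e^{\cdots}p(\ell,x)\,m_{j}^{1+\beta_{i}}(\ell)\,d\ell$, and the piece of this integral over $[\tau_{m_{j}},r_{0}+\tau_{m_{j}}]$ is infinite because $m_{j}$ has already exploded there. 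That is where both $r_{0}$ (the length of the interval past the explosion time, and the threshold for $p(\cdot,0)<1$) and $2^{\alpha}$ (the requirement $\rho\geq2$) come from.

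A second, more local problem: the comparison you propose, $S_{2t-r}v_{j}(r,\cdot)(0)\geq c_{\alpha}\,m_{j}(r)$ with a constant $c_{\alpha}$, is not available. Comparing $p(2t-r,y)$ with $p(r,y)$ is a statement about the kernel at two different times evaluated at the same point, which is property (3) of Lemma \ref{l1} and produces the $t$-dependent factor $\left(r/(2t-r)\right)^{d/\alpha}$, not a constant; property (4), which you invoke, is the product factorization $p(t,(x-y)/r)\geq p(t,x)p(t,y)$ and does not apply to this comparison. If you wanted to salvage your route you would need to (i) keep the $t$-dependent factor, (ii) justify why $\int_{0}^{t}m_{j}(r)^{1+\beta_{i}}dr=+\infty$ once $t$ exceeds $\tau_{m_{j}}$ (the divergence of $m_{j}$ at $\tau_{m_{j}}^{-}$ alone does not force the integral to diverge; you must integrate across the explosion time, where $m_{j}\equiv+\infty$), and (iii) add the missing bridge back to $v$, for instance the observation that $m_{i}(t)\leq\|v_{i}(t,\cdot)\|_{\infty}$, so that $m_{i}(t)=+\infty$ is incompatible with $v_{i}$ being a bounded solution at time $t$. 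As it stands, the proposal does not constitute a proof of the lemma.
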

	\begin{proof}
	For $i=1,2,\ \left\lbrace j \right\rbrace=\left\lbrace1,2\right\rbrace/\{i\}$, let $\tau_{m_{i}}<\infty$ be the blow-up time of $m_{i},$ and $t \geq \tau_{m_{j}}+2^{\alpha}(r_{0}+\tau_{m_{j}}).$ Then, for $s\in [0,\tau_{m_{j}}],$
	\begin{align}
	t-s \geq t-\tau_{m_{j}} \geq 2^{\alpha}(r_{0}+\tau_{m_{j}}) \geq 2^{\alpha}(r_{0}+s).\nonumber 
	\end{align}
	In the above inequality, we define
	\begin{align}\label{n5}
	\left( \frac{t-s}{r_{0}+s} \right)^{1/\alpha}=:\rho \geq 2. 
	\end{align}
	Since $p(r_{0},0)<1,$ we get by Lemma \ref{l1} (1)
	\begin{align}\label{n6}
	p(r_{0}+s,0)=p\left(\frac{r_0+s}{r_0}r_0,0\right)=\left( \frac{r_{0}+s}{r_{0}}\right)^{-d/\alpha}p(r_{0},0)<1, \ \mbox{ for all } \ s \in [0,\tau_{m_{j}}].
	\end{align}
	Therefore, for any $x,y \in \mathbb{R}^{d},$ by Lemma \ref{l1} (1), we obtain 
	\begin{align}
	p(t-s,y-x)&=p\left(\left(\frac{t-s}{r_{0}+s}\right) r_{0}+s,y-x \right)\nonumber\\
	&= \left( \frac{t-s}{r_{0}+s}\right)^{-d/\alpha}p\left( r_{0}+s,\left( \frac{t-s}{r_{0}+s}\right)^{-1/\alpha} (y-x)\right). \nonumber  
	\end{align}
	From \eqref{n5},\eqref{n6} and Lemma \ref{l1} (4), we deduce  the inequality
	\begin{align} \label{e5}
	p(t-s,y-x) &=\rho^{-d} p\left( r_{0}+s,\frac{y-x}{\rho} \right) 
	\geq \rho^{-d} p\left( r_{0}+s,x\right)p\left( r_{0}+s,y\right). 
	\end{align}
	Therefore, from \eqref{e3} and \eqref{e5}, it follows that for each $t \geq \tau_{m_{j}}+2^{\alpha}(r_{0}+\tau_{m_{j}}), \ i=1,2,\ \left\lbrace j \right\rbrace=\left\lbrace1,2\right\rbrace/\{i\}$, all $x \in \mathbb{R}^{d},$  
	\begin{align}\label{n7}
	&v_{i}(r_{0}+t,x)\nonumber\\&=\exp\{\gamma_{i}t\}\int_{\mathbb{R}^{d}}p(t,x-y)v_{i}(r_{0},y) dy \nonumber\\
	&\quad+\int_{0}^{t} \int_{\mathbb{R}^{d}} e^{ \gamma_{i}(t-s)}p(t-s,x-y)v_{j}^{1+\beta_{i}}(r_{0}+s,y)e^{ \rho_{1}B_{1}^{H}(r_{0}+s)+\rho_{2}B_{2}^{H}(r_{0}+s)}dy ds\nonumber \\
	& \geq \rho^{-d} \int_{0}^{\tau_{m_{j}}} \int_{\mathbb{R}^{d}} e^{ \gamma_{i}(t-s)+\rho_{1}B_{1}^{H}(r_{0}+s)+\rho_{2}B_{2}^{H}(r_{0}+s)} p\left( r_{0}+s,x\right)p\left( r_{0}+s,y\right)v_{j}^{1+\beta_{i}}(r_{0}+s,y)dy ds \nonumber\\
	&= \rho^{-d} \int_{0}^{\tau_{m_{j}}} e^{ \gamma_{i}(t-s)+\rho_{1}B_{1}^{H}(r_{0}+s)+\rho_{2}B_{2}^{H}(r_{0}+s)} p\left( r_{0}+s,x\right)\left[\int_{\mathbb{R}^{d}} p\left( r_{0}+s,y\right)v_{j}^{1+\beta_{i}}(r_{0}+s,y)dy  \right] ds. 
	\end{align}  
	Here $p(r_{0}+s,y)$ is the probability density and by using Jensen's inequality, we have
	\begin{align}
	\left[\int_{\mathbb{R}^{d}} p\left( r_{0}+s,y\right)v^{1+\beta_{i}}_{j}(r_{0}+s,y)dy  \right] &\geq \left[\int_{\mathbb{R}^{d}} p\left( r_{0}+s,y\right)v_{j}(r_{0}+s,y)dy  \right]^{1+\beta_{i}} = m_{j}^{1+\beta_{i}}(r_{0}+s). \nonumber 
	\end{align}
	Therefore \eqref{n7} becomes
	\begin{align}
	v_{i}(r_{0}+t,x)& \geq \rho^{-d} \int_{0}^{\tau_{m_{j}}} e^{ \gamma_{i}(t-s)+\rho_{1}B_{1}^{H}(r_{0}+s)+\rho_{2}B_{2}^{H}(r_{0}+s)} p\left( r_{0}+s,x\right) m_{j}^{1+\beta_{i}}(r_{0}+s) ds\nonumber\\&=  \rho^{-d} \int_{r_0}^{\tau_{m_{j}}} e^{ \gamma_{i}(t+r_0-\ell)+\rho_{1}B_{1}^{H}(\ell)+\rho_{2}B_{2}^{H}(\ell)} p\left(\ell,x\right) m_{j}^{1+\beta_{i}}(\ell) d\ell\nonumber\\&\quad+\rho^{-d} \int_{\tau_{m_{j}}}^{r_0+\tau_{m_{j}}} e^{ \gamma_{i}(t+r_0-\ell)+\rho_{1}B_{1}^{H}(\ell)+\rho_{2}B_{2}^{H}(\ell)} p\left(\ell,x\right) m_{j}^{1+\beta_{i}}(\ell) d\ell. \nonumber
	\end{align}
	Since $r_{0}>0,$ the second integral in the RHS of the above inequality is infinite. Hence $\tau_{v_i} \leq r_{0}+\tau_{m_{j}}+2^{\alpha}(r_{0}+\tau_{m_{j}})=(r_{0}+\tau_{m_{j}})(1+2^{\alpha}),$ for $i=1,2,\ \left\lbrace j \right\rbrace=\left\lbrace1,2\right\rbrace/\{i\}$, 
	which  completes the proof.
	\end{proof}


	The following theorem gives an upper bound for the finite-time blow-up of solution $u=(u_1,u_2)^{\top}$ of the system \eqref{b1}.
	\begin{theorem}\label{thm1}
	Let $\frac{1}{2}<H<1$ and let $u=(u_1,u_2)^{\top}$ be a weak solution of \eqref{b1} with nonnegative initial data $f_1$ and $f_2$ which are  bounded measurable functions and \eqref{a4} holds. 	
	\item [1.] If $\beta_{1}=\beta_{2}=\beta>0.$ Then $u=(u_1,u_2)^{\top}$ blows-up in finite-time which is  given by
	\begin{align}
	\theta_{1} = \inf \left\lbrace t>r_{0} : \int_{r_{0}}^{t} \min \{ g_{1,2}(s),g_{2,1}(s) \} ds \geq \frac{2^{1+\beta}}{\beta \left(r_{1} e^{-\gamma_{1}r_{0}}+r_{2}e^{-\gamma_{2}r_{0}} \right)^{\beta}}    \right\rbrace. \nonumber
	\end{align}
	Moreover, if $\tau$ is the blow-up time of \eqref{b1}, then \begin{align}
	\tau &\leq \min \left\lbrace  (r_{0}+\tau_{m_1})(1+2^{\alpha}), (r_{0}+\tau_{m_2})(1+2^{\alpha}) \right\rbrace \leq (r_{0}+\theta_{1})(1+2^{\alpha}), \nonumber
	\end{align}	
	where $r_{i}=2^{-2d}p(1,0)\exp\{\gamma_{i}r_{0}\}\mathbb{E}\left[ f_{i}(X_{2^{-\alpha}r_{0}}) \right], \ i=1,2$ and $$g_{i,j}(s)=2^{\frac{-d(1+\beta_{i})}{\alpha}}e^{k_{i,j}s+\rho_{1}B_{1}^{H}(s)+\rho_{2}B_{2}^{H}(s)}s^{\frac{-d\beta_{i}}{\alpha}},$$ where $\rho_{1}, \rho_{2}$ are defined in \eqref{a4} and $k_{i,j}=-\gamma_{i}+(1+\beta_{i})\gamma_{j}$ for $i=1,2, \ j\in \left\{ 1,2 \right\} / \left\{i\right\}.$  
	\item [2.] If $\beta_{1}>\beta_{2}>0$ and let $D_{1}=\left( \frac{\beta_{1}-\beta_{2}}{1+\beta_{1}} \right)\left(\frac{1+\beta_{1}}{1+\beta_{2}} \right)^{\frac{1+\beta_{2}}{\beta_{1}-\beta_{2}}},$
	\begin{align}\label{nca3}
	\epsilon_{0}&\leq \min \Bigg\{ 1, \left( h_{2}(r_0)/D_{1}^{1/1+\beta_{2}} \right)^{\beta_{1}-\beta_{2}} \Bigg\}.
	\end{align} 
	Assume that 
	\begin{align}\label{nca4}
	2^{-(1+\beta_{2})}\epsilon_{0}\left(r_{1} e^{-\gamma_{1}r_{0}}+r_{2}e^{-\gamma_{2}r_{0}}\right)^{1+\beta_{2}}\geq \epsilon_{0}^{\frac{1+\beta_{1}}{\beta_{1}-\beta_{2}}}D_{1}.
	\end{align} 
    Then $u=(u_1,u_2)^{\top}$ blow-up in finite-time which is given by
	\begin{align}
	\theta_{2} = &\inf \Bigg\{  t> r_0 : \int_{r_0}^{t} \min\left\lbrace g_{1,2}(s),g_{2,1}(s) \right\rbrace ds \geq \nonumber\\ 
	& \qquad\left[\beta_{2} \left(r_{1} e^{-\gamma_{1}r_{0}}+r_{2}e^{-\gamma_{2}r_{0}}\right)^{\beta_{2}} \left(\frac{\epsilon_0}{2^{1+\beta_{2}}}-\frac{ \epsilon_{0}^{\frac{1+\beta_{1}}{\beta_{1}-\beta_{2}}}D_{1}}{\left(r_{1} e^{-\gamma_{1}r_{0}}+r_{2}e^{-\gamma_{2}r_{0}}\right)^{1+\beta_{2}}}\right)\right]^{-1} \Bigg\}, \nonumber
	\end{align}
	Moreover, if $\tau$ is the blow-up time of \eqref{b1}, then  
	\begin{align}
	\tau &\leq \min \left\lbrace  (r_{0}+\tau_{m_1})(1+2^{\alpha}), (r_{0}+\tau_{m_2})(1+2^{\alpha}) \right\rbrace \leq (r_{0}+\theta_{2})(1+2^{\alpha}), \nonumber
	\end{align} 
	where $r_{i}=p(1,0)2^{-2d}\exp\{\gamma_{i}r_{0}\}\mathbb{E}\left[ f_{i}(X_{2^{-\alpha}r_{0}}) \right], \ i=1,2$ and for any $r>0,$ 
	\begin{align}\label{48}
	g_{i,j}(s)=2^{\frac{-d(1+\beta_{i})}{\alpha}}e^{k_{i,j}s+\rho_{1}B_{1}^{H}(s)+\rho_{2}B_{2}^{H}(s)}s^{\frac{-d\beta_{i}}{\alpha}},
	\end{align} where $\rho_{1}, \rho_{2}$ are defined in \eqref{a4} and 
    \begin{align}\label{49}
	k_{i,j}=-\gamma_{i}+(1+\beta_{i})\gamma_{j}\ \text{ for } \ i=1,2, \ j\in \left\{ 1,2 \right\} / \left\{i\right\}. 
    \end{align}
	\end{theorem}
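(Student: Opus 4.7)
The plan is to show that at least one of the averages $m_i(t)=S_tv_i(t,\cdot)(0)$ explodes in finite time, and then invoke Lemma \ref{lem1} to transfer blow-up from $m_i$ to $v$, which via Theorem \ref{thm2.1} transfers to $u$. Starting from the mild-solution representation \eqref{e3} (which under \eqref{a4} collapses the mixed exponential of Brownian motions into the single factor $e^{\rho_1 B_1^H(r)+\rho_2 B_2^H(r)}$), I would multiply by $p(t,y)$ and integrate in $y$; the Chapman--Kolmogorov identity $\int p(t,y)p(t-r,z-y)\,dy=p(2t-r,z)$ then yields
\begin{equation*}
m_i(t)=e^{\gamma_i t}S_{2t}f_i(0)+\int_0^t e^{\gamma_i(t-r)+\rho_1 B_1^H(r)+\rho_2 B_2^H(r)}\int_{\mathbb{R}^d} p(2t-r,z)v_j^{1+\beta_i}(r,z)\,dz\,dr,
\end{equation*}
for $i=1,2$, $\{j\}=\{1,2\}\setminus\{i\}$.

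Next I would apply Jensen's inequality to the inner integral, since $p(2t-r,z)\,dz$ is a probability measure, to bound it from below by $\bigl(\int p(2t-r,z)v_j(r,z)\,dz\bigr)^{1+\beta_i}$, and then use Lemma \ref{l1}(1)--(3) together with Lemma \ref{l1}(4) with $r=2$ (valid because $p(r_0,0)<1$) to replace $p(2t-r,\cdot)$ by a constant multiple of $p(r,\cdot)$, thereby recovering $m_j(r)^{1+\beta_i}$. Restricting to $t\ge r_0$ and estimating $e^{\gamma_i t}S_{2t}f_i(0)$ from below by the same chain of scaling identities is what produces exactly the constants $r_i=2^{-2d}p(1,0)e^{\gamma_i r_0}\mathbb{E}[f_i(X_{2^{-\alpha}r_0})]$ and the weight $2^{-d(1+\beta_i)/\alpha}s^{-d\beta_i/\alpha}$ in the definition of $g_{i,j}$. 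Summing over $i=1,2$ and using $(a+b)^{1+\beta_i}\le 2^{\beta_i}(a^{1+\beta_i}+b^{1+\beta_i})$, I would obtain a Volterra-type inequality for $\mathcal{M}(t):=m_1(t)+m_2(t)$ driven by $\min\{g_{1,2}(s),g_{2,1}(s)\}$, with initial datum controlled by $r_1 e^{-\gamma_1 r_0}+r_2 e^{-\gamma_2 r_0}$.

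For part~(1), where $\beta_1=\beta_2=\beta$, the resulting inequality has the single-power form $\dot\phi\ge g(s)\phi^{1+\beta}$, and the standard separation argument forces explosion once $\int_{r_0}^{t}\min\{g_{1,2},g_{2,1}\}\,ds\ge 2^{1+\beta}\beta^{-1}(r_1 e^{-\gamma_1 r_0}+r_2 e^{-\gamma_2 r_0})^{-\beta}$, which is exactly $\theta_1$. For part~(2), where $\beta_1>\beta_2$, the two different powers $\mathcal{M}^{1+\beta_1}$ and $\mathcal{M}^{1+\beta_2}$ must be unified: Young's inequality with conjugate exponents $(1+\beta_1)/(1+\beta_2)$ and $(1+\beta_1)/(\beta_1-\beta_2)$ dominates $\mathcal{M}^{1+\beta_1}$ by $\epsilon\mathcal{M}^{1+\beta_2}+\epsilon^{(1+\beta_1)/(\beta_1-\beta_2)}D_1$, and the conditions \eqref{nca3}--\eqref{nca4} on $\epsilon_0$ are designed precisely so that the Young penalty is strictly dominated by the gain, making the effective coefficient of $\mathcal{M}^{1+\beta_2}$ positive and equal to the reciprocal of the threshold appearing in $\theta_2$; the single-power criterion then produces $\theta_2$. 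Finally, Lemma \ref{lem1} converts $\tau_{m_i}\le\theta_k$ into $\tau\le(r_0+\theta_k)(1+2^\alpha)$. The main obstacle is the scaling bookkeeping: several applications of Lemma \ref{l1}(1)--(4) with the precise factors $2$ and $2^\alpha$ must be chained so that the constants $r_i$, the weight $2^{-d(1+\beta_i)/\alpha}s^{-d\beta_i/\alpha}$, and the exponent $k_{i,j}=-\gamma_i+(1+\beta_i)\gamma_j$ (which absorbs the residual $e^{\gamma_i(t-r)}$ against the growth $e^{(1+\beta_i)\gamma_j r}$ of $v_j^{1+\beta_i}$) all emerge with the correct form; together with the calibration of $\epsilon_0$ in part~(2), this is the most delicate part of the argument.
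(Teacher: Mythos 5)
Your proposal is correct and follows essentially the same route as the paper's proof: reduce to the means $m_i(t)=S_tv_i(t,\cdot)(0)$ via the mild formulation restarted near $r_0$, use Jensen plus the scaling properties of Lemma \ref{l1} to obtain the coupled Volterra inequality with kernel $g_{i,j}$ and initial mass $r_1e^{-\gamma_1 r_0}+r_2e^{-\gamma_2 r_0}$, sum the two components, apply the power-mean inequality (Case 1) or Young's inequality with the calibration of $\epsilon_0$ (Case 2), and finish with Lemma \ref{lem1}. The only cosmetic deviation is your constant $2^{\beta}$ in place of the paper's $2^{1+\beta}$ in the power-mean step, which is sharper and therefore still yields $\tau_{m_i}\le\theta_1$ as stated.
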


	\begin{proof}
	By using the property of semigroup for $v_{i}, \ i=1,2$, we have
	\begin{align*}
	v_{i}(t+r_{0})=T_{i}(t)v_{i}(r_{0})+\int_{0}^{t} T_{i}(t-s)v_{j}^{1+\beta_{i}}(r_{0}+s) e^{\rho_{1}B_{1}^{H}(r_{0}+s)+\rho_{2}B_{2}^{H}(r_{0}+s)} ds, \ t \geq 0,
	\end{align*}
	for $j\in \left\{ 1,2 \right\} / \left\{i\right\}.$	Therefore 
	\begin{align}\label{nn1}
	&m_{i}(t+r_{0})\nonumber\\&=S_{t+r_{0}}v_{i}(t+r_{0})(0) \nonumber \\
	&=S_{t+r_{0}}\left( e^{\gamma_{i}t}S_{t}v_{i}(r_{0})+\int_{0}^{t} e^{\gamma_{i}(t-s)}S_{t-s}v_{j}^{1+\beta_{i}}(r_{0}+s) e^{\rho_{1}B_{1}^{H}(r_{0}+s)+\rho_{2}B_{2}^{H}(r_{0}+s)} ds \right)(0) \nonumber \\
	&= S_{2t+r_{0}}e^{\gamma_{i}t}v_{i}(r_{0},0)+\int_{0}^{t} e^{\gamma_{i}(t-s)}S_{2t+r_{0}-s}v_{j}^{1+\beta_{i}}(r_{0}+s,0) e^{\rho_{1}B_{1}^{H}(r_{0}+s)+\rho_{2}B_{2}^{H}(r_{0}+s)} ds.
	\end{align}
	By using Lemma \ref{l1} (1) and Jensen's inequality, we obtain
	\begin{align}\label{n8}
	S_{2t+r_{0}-s}v_{j}^{1+\beta}(r_{0}+s,0) &\geq \left( \frac{r_{0}+s}{2t+r_{0}-s}\right)^{d/\alpha}S_{r_{0}+s}v_{j}^{1+\beta_{i}}(r_{0}+s,0) \nonumber\\  
	& \geq 	\left( \frac{r_{0}+s}{2t+2r_{0}}\right)^{d/\alpha}\left( S_{r_{0}+s}v_{j}(r_{0}+s,0)\right)^{1+\beta_{i}}  \nonumber\\
	&= \left( \frac{r_{0}+s}{2t+2r_{0}}\right)^{d/\alpha}m_{j}^{1+\beta_{i}}(r_{0}+s), \ 0 \leq s \leq t,
	\end{align}
	for $i=1,2, \ j\in \left\{ 1,2 \right\} / \left\{i\right\}.$ Also, we have  
	\begin{align}
	v_{i}(r_{0},y) \geq \exp\{ \gamma_{i} r_{0} \} \int_{\mathbb{R}^{d}} p(r_{0},x-y)f_{i}(x) dx , \nonumber
	\end{align}
	for all $y \in \mathbb{R}^{d}. $ Since $p(r_{0},0)<1$ and by Lemma \ref{l1} (1) and (4),  we get 
	\begin{align}
	\exp\{\gamma_{i}r_{0}\} &\int_{\mathbb{R}^{d}} p\left( r_{0}, \frac{2x-2y}{2}\right)f_{i}(x)dx  \nonumber\\&\geq \exp\{\gamma_{i}r_{0}\} p(r_{0},2y)\int_{\mathbb{R}^{d}}p(r_{0},2x)f_{i}(x)dx \nonumber\\
	&= 2^{-2d}\exp\{\gamma_{i}r_{0}\}p(2^{-\alpha}r_{0},y) \int_{\mathbb{R}^{d}}p(2^{-\alpha}r_{0},x) f_{i}(x)dx \nonumber\\
	&= 2^{-2d}\exp\{\gamma_{i}r_{0}\}\mathbb{E}\left[ f_{i}(X_{2^{-\alpha}r_{0}}) \right]  p(2^{-\alpha}r_{0},y),\nonumber
	\end{align}
	for $i=1,2.$ Therefore we have $$v_{i}(r_{0},y)\geq 2^{-2d}\exp\{\gamma_{i}r_{0}\}\mathbb{E}\left[ f_{i}(X_{2^{-\alpha}r_{0}}) \right] p(2^{-\alpha}r_{0},y) .$$ 
	By Lemma \ref{l1} (4) and semigroup property, we deduce 
	\begin{align}\label{n9}
	S_{2t+r_{0}}v_{i}(r_{0},0)&=\int_{\mathbb{R}^{d}}p(2t+r_{0},y)v_{i}(r_{0},y)dy\nonumber\\
	&\geq 2^{-2d}\exp\{\gamma_{i}r_{0}\}\mathbb{E}\left[ f_{i}(X_{2^{-\alpha}r_{0}}) \right]p\left( 2t+(1+2^{-\alpha})r_{0},0\right) \nonumber\\&=2^{-2d}\exp\{\gamma_{i}r_{0}\}\mathbb{E}\left[ f_{i}(X_{2^{-\alpha}r_{0}}) \right]\left(2t+(1+2^{-\alpha})r_{0}\right)^{-d/\alpha}p(1,0)\nonumber\\
	&\geq 2^{-2d}\exp\{\gamma_{i}r_{0}\}\mathbb{E}\left[ f_{i}(X_{2^{-\alpha}r_{0}}) \right](2t+2r_{0})^{-d/\alpha}p(1,0)\nonumber\\
	& =r_{i}(2t+2r_{0})^{-d/\alpha},
	\end{align}
	where 
    \begin{align} \label{nn2}
	r_{i}=2^{-2d}p(1,0)\exp\{\gamma_{i}r_{0}\}\mathbb{E}\left[ f_{i}(X_{2^{-\alpha}r_{0}}) \right], \ i=1,2.
	\end{align}
	Plugging \eqref{n8} and \eqref{n9} into \eqref{nn1}, and multiplying both sides of \eqref{nn1} by $\exp\{- \gamma_{i}(r_{0}+t)\}(2t+2r_{0})^{d/\alpha}$ for $i=1,2,$ we have 
	\begin{align}
	m_{i}(t+r_{0}) &\geq r_{i}(2t+2r_{0})^{-d/\alpha}e^{\gamma_{i}t}\nonumber\\
	&\quad+\int_{0}^{t}e^{\gamma_{i}(t-s)}\left( \frac{r_{0}+s}{2t+2r_{0}}\right)^{d/\alpha}m_{j}^{1+\beta_{i}}(r_{0}+s) e^{\rho_{1}B_{1}^{H}(r_{0}+s)+\rho_{2}B_{2}^{H}(r_{0}+s)} ds. \nonumber
	\end{align}
	That is, 
	\begin{align*}
	&\exp\{- \gamma_{i}(r_{0}+t)\}(2t+2r_{0})^{d/\alpha}m_{i}(t+r_{0})\\
	&\geq e^{-\gamma_{i}r_{0}}r_{i}+\int_{0}^{t} e^{-\gamma_{i}(r_{0}+s)}(r_{0}+s)^{d/\alpha}m_{j}^{1+\beta_{i}}(r_{0}+s) e^{\rho_{1}B_{1}^{H}(r_{0}+s)+\rho_{2}B_{2}^{H}(r_{0}+s)}ds.\nonumber		
	\end{align*}
	Let 
	\begin{align} \label{ne1}
	w_{i}(r)&=e^{-\gamma_{i}r}(2r)^{d/\alpha}m_{i}(r), \ \  \mbox{and} \nonumber\\ g_{i,j}(r)&=2^{\frac{-d(1+\beta_{i})}{\alpha}}e^{k_{i,j}r+\rho_{1}B_{1}^{H}(r)+\rho_{2}B_{2}^{H}(r)}r^{\frac{-d\beta_{i}}{\alpha}}, \ \ r>0,
	\end{align}
	where $k_{i,j}=-\gamma_{i}+(1+\beta_{i})\gamma_{j}$ for $i=1,2, \ j\in \left\{ 1,2 \right\} / \left\{i\right\}.$ Then, we  have  
	\begin{align}
	w_{i}(r_{0}+t) &=e^{-\gamma_{i}(r_0+t)}(2(r_0+t))^{d/\alpha}m_{i}(r_0+t)\nonumber\\&\geq r_{i}e^{-\gamma_{i,j}r_{0}}+\int_{0}^{t}g_{i,j}(r_{0}+s)w_{j}^{1+\beta_{i}}(r_{0}+s) ds \nonumber\\
	&=r_{i}e^{-\gamma_{i}r_{0}}+\int_{r_{0}}^{r_{0}+t}g_{i,j}(r)w_{j}^{1+\beta_{i}}(r) dr, \nonumber		
	\end{align} 
	which can be equivalently written as 
	\begin{align}
	w_{i}(t) &\geq r_{i}e^{-\gamma_{i}r_{0}}+\int_{r_{0}}^{t}g_{i,j}(r)w_{j}^{1+\beta_{i}}(r) dr, \ \ t \geq r_{0}. \nonumber
	\end{align} 
	Notice that $m_{i}$ and $w_{i}$ explode at the same finite-time due to \eqref{ne1}, and so do $v_{i}$ and $u_{i}.$ Moreover, by comparison argument (see Theorem 1.3 of \cite{teschl} and Appendix), we have $w_{i}(t) \geq h_{i}(t), \ i=1,2 $ for all $ t \geq r_{0},$ where $h_1$ and $h_2$ are given by
	\begin{align*}
	h_{1}(t)&=r_{1}e^{-\gamma_{1}r_{0}}+\int_{r_{0}}^{t}g_{1,2}(r)h_{2}^{1+\beta_{1}}(r) dr, \\
	h_{2}(t)&=r_{2}e^{-\gamma_{2}r_{0}}+\int_{r_{0}}^{t}g_{2,1}(r)h_{1}^{1+\beta_{2}}(r) dr. 
	\end{align*}
	Define $E(t)=h_{1}(t)+h_{2}(t),\ t \geq r_{0},$ so that 
	\begin{align*}
	E(t)=r_{1} e^{-\gamma_{1}r_{0}}+r_{2}e^{-\gamma_{2}r_{0}} +\int_{r_{0}}^{t}g_{1,2}(r)h_{2}^{1+\beta_{1}}(r) dr+\int_{r_{0}}^{t}g_{2,1}(r)h_{1}^{1+\beta_{2}}(r) dr.\nonumber 
	\end{align*}
	Therefore, 
	\begin{equation}\label{eq1}
	\left\{
	\begin{aligned}
	\frac{dE(t)}{dt}&=g_{1,2}(t)h_{2}^{1+\beta_{1}}(t)+g_{2,1}(t)h_{1}^{1+\beta_{2}}(t),\\
	E(r_{0})&=r_{1}e^{-\gamma_{1}r_{0}}+r_{2}e^{-\gamma_{2}r_{0}}.  
	\end{aligned}
    \right. 
    \end{equation}
	\textbf{Case 1:} Suppose that $\beta_{1}=\beta_{2}=\beta>0$ in \eqref{eq1}. Then,  we have
	\begin{align}\label{sa1}
	\frac{dE(t)}{dt}&= g_{1,2}(t)h_{2}^{1+\beta}(t)+g_{2,1}(t)h_{1}^{1+\beta}(t)\nonumber \\
	& \geq \min\left\lbrace g_{1,2}(t),g_{2,1}(t) \right\rbrace  \left[ h_{2}^{1+\beta}(t)+h_{1}^{1+\beta}(t)\right]  .
	\end{align}
	By substituting $a=1, \ b=\frac{h_{1}}{h_{2}},\ n=1+\beta$ into the inequality 
	\begin{eqnarray}\label{cta1}
	a^{n}+b^{n}\geq 2^{-n}{(a+b)}^{n},
	\end{eqnarray}
	which is valid for $a, \ b\geq0,$ we obtain 
	\begin{eqnarray}
	&&h_{1}^{1+\beta}(t)+h_{2}^{1+\beta}(t) \geq 2^{-(1+\beta)} \left( h_{1}(t)+h_{2}(t) \right)^{1+\beta} \geq 2^{-(1+\beta)} E^{1+\beta}(t).\nonumber
	\end{eqnarray}
	Form \eqref{sa1}, we have
	\begin{align}
	\frac{dE(t)}{dt} \geq \min\left\lbrace g_{1,2}(t),g_{2,1}(t) \right\rbrace  2^{-(1+\beta)} E^{1+\beta}(t) .\nonumber
	\end{align} 
	Thus, by comparison argument (see Theorem 1.3 of \cite{teschl} and Appendix), we get $E(t) \geq I(t)$ for all $t \geq 0$, where $I(t)$ solves the differential equation
	\begin{align}
	\frac{dI(t)}{dt} = \min\left\lbrace g_{1,2}(t),g_{2,1}(t) \right\rbrace  2^{-(1+\beta)} I^{1+\beta}(t), \ \ I(r_{0})=E(r_{0}),\nonumber  
	\end{align}  
	which gives that 
	\begin{align}
	I(t)=\left\lbrace E^{-\beta}(r_0)- 2^{-(1+\beta)}\beta\int_{r_0}^{t} \min \{ g_{1,2}(s),g_{2,1}(s) \} ds\right\rbrace^{-\frac{1}{\beta}}. \nonumber
	\end{align} 
	For the above equation, the explosion time is given by
	\begin{align}
	\tau_{m_{i}}=\tau_{w_{i}} \leq \theta_{1} := \inf \left\lbrace t>r_{0} : \int_{r_{0}}^{t} \min \{ g_{1,2}(s),g_{2,1}(s) \} ds \geq \frac{2^{1+\beta}}{\beta \left( r_{1}e^{-\gamma_{1}r_{0}}+r_{2}e^{-\gamma_{2}r_{0}} \right)^{\beta}}    \right\rbrace, \nonumber
	\end{align}
    for $i=1,2,$ where $r_{1}$ and $r_{2}$ are given in \eqref{nn2}. If $\tau$ is the blow-up time of the solution $u=(u_1,u_2)^{\top}$ of the system \eqref{b1}, then by Lemma \ref{lem1}, we have 
    \begin{align}
    \tau &\leq \min \left\lbrace  (r_{0}+\tau_{m_1})(1+2^{\alpha}), (r_{0}+\tau_{m_2})(1+2^{\alpha}) \right\rbrace \leq (r_{0}+\theta_{1})(1+2^{\alpha}). \nonumber
    \end{align} 
	\textbf{Case 2:} Suppose that $\beta_{1}>\beta_{2}>0$ in \eqref{eq1}. Then, we have
	\begin{align}\label{sa2}
	\frac{dE(t)}{dt} \geq \min\left\lbrace g_{1,2}(t),g_{2,1}(t) \right\rbrace  \left[ h_{2}^{1+\beta_{1}}(t)+h_{1}^{1+\beta_{2}}(t)\right]  .
	\end{align}
	The Young inequality states that if $1<b<\infty$, $\delta>0$, and $a=\frac{b}{b-1}$, then 
	\begin{eqnarray} \label{na10}
	xy\leq \frac{\delta^{a}x^{a}}{a}+\frac{\delta^{-b}y^{b}}{b}, \ x,y\geq 0. 
	\end{eqnarray}
	By setting $b=\frac{1+\beta_{1}}{1+\beta_{2}}, \ y=h_{2}^{1+\beta_{2}}(t), \ x=\epsilon, \ \delta=\left( \frac{1+\beta_{1}}{1+\beta_{2}} \right)^{\frac{1+\beta_{2}}{1+\beta_{1}}},$ and using the fact that $\beta_{2}<\beta_{1}$ in (\ref{na10}), it follows that for any $\epsilon>0,$
	\begin{eqnarray} 
	h_{2}^{1+\beta_{1}}(t) \geq \epsilon h_{2}^{1+\beta_{2}}(t)-D_{1}\epsilon^{\frac{1+\beta_{1}}{\beta_{1}-\beta_{2}}},\nonumber
	\end{eqnarray}
	where $D_{1}=\left( \frac{\beta_{1}-\beta_{2}}{1+\beta_{1}} \right)\left(\frac{1+\beta_{1}}{1+\beta_{2}} \right)^{\frac{1+\beta_{2}}{\beta_{1}-\beta_{2}}}.$
	Since $\epsilon_0$ is the minimum of the quantities given in  \eqref{nca3},  in particular, we have $\epsilon_0\leq ( h_{2}(r_0)/D_{1}^{1/1+\beta_{2}})^{\beta_{1}-\beta_{2}},$ so that 
	\begin{eqnarray}
	\epsilon_{0} h_{2}^{1+\beta_{2}}(r_0)-D_{1}\epsilon_{0}^{\frac{1+\beta_{1}}{\beta_{1}-\beta_{2}}}\geq 0\nonumber.
	\end{eqnarray}
	From  \eqref{sa2},  we have
	\begin{align}\label{nca2}
	\frac{dE(t)}{dt}\geq \min\left\lbrace g_{1,2}(t),g_{2,1}(t) \right\rbrace \left[ h_{1}^{1+\beta_{2}}(t)+\epsilon_{0} h_{2}^{1+\beta_{2}}(t)-D_{1}\epsilon_{0}^{\frac{1+\beta_{1}}{\beta_{1}-\beta_{2}}} \right].
	\end{align}
	Utilizing the inequality  \eqref{cta1}  again with $n=1+\beta_{2}, \ a=1$ and $b=\epsilon_{0}^\frac{1}{1+\beta_{2}}\frac{h_{1}(t)}{h_{2}(t)}$, and by using  \eqref{nca3},  we see that 
	\begin{eqnarray}
	h_{1}^{1+\beta_{2}}(t) + \epsilon_{0} h_{2}^{1+\beta_{2}}(t) \geq 2^{-(1+\beta_{2})} \left( h_{1}(t)+\epsilon_{0}^\frac{1}{1+\beta_{2}}h_{2}(t) \right)^{1+\beta_{2}}\geq  2^{-(1+\beta_{2})}\epsilon_{0}E^{1+\beta_{2}}(t)\nonumber.
	\end{eqnarray}
	From  \eqref{nca2},  we deduce 
	\begin{align}
	\frac{dE(t)}{dt}&\geq \min\left\lbrace g_{1,2}(t),g_{2,1}(t) \right\rbrace \left[2^{-(1+\beta_{2})}\epsilon_{0}E^{1+\beta_{2}}(t)-D_{1}\epsilon_{0}^{\frac{1+\beta_{1}}{\beta_{1}-\beta_{2}}} \right].\nonumber
	\end{align}
    Note that the condition  \eqref{nca4}  gives $E(t)\geq E(r_0)>0,$ and so 
    \begin{align}
	\frac{dE(t)}{E^{1+\beta_{2}}(t)}&\geq \min\left\lbrace g_{1,2}(t),g_{2,1}(t) \right\rbrace  \left[\frac{\epsilon_{0}}{2^{1+\beta_{2}}}-\frac{D_{1}\epsilon_{0}^{\frac{1+\beta_{1}}{\beta_{1}-\beta_{2}}}}{E^{1+\beta_{2}}(r_0)} \right]dt.\nonumber
	\end{align}
	Thus, by comparison argument (see Theorem 1.3 of \cite{teschl} and Appendix), we get $E(t) \geq I(t)$ for all $t \geq 0$, where $I(t)$ solves the differential equation
	\begin{align}
	\frac{dI(t)}{I^{1+\beta_{2}}(t)}= \min\left\lbrace g_{1,2}(t),g_{2,1}(t) \right\rbrace  \left[\frac{\epsilon_{0}}{2^{1+\beta_{2}}}-\frac{D_{1}\epsilon_{0}^{\frac{1+\beta_{1}}{\beta_{1}-\beta_{2}}}}{I^{1+\beta_{2}}(r_0)} \right]dt.\nonumber	
	\end{align}
	Therefore,
	\begin{align}
	I(t)=\left\lbrace I^{-\beta_{2}}(r_{0})-\beta_{2}\left[ \frac{\epsilon_0}{2^{1+\beta_{2}}}-\frac{ \epsilon_{0}^{\frac{1+\beta_{1}}{\beta_{1}-\beta_{2}}}D_{1} }{I^{1+\beta_{2}}(r_0)} \right]\displaystyle\int_{r_0}^{t} \min\left\lbrace g_{1,2}(s),g_{2,1}(s) \right\rbrace ds \right\rbrace^{\frac{-1}{\beta_{2}}}. \nonumber 
	\end{align}
	For the above inequality, the explosion time is given by
	\begin{align}
	\tau_{m_{i}}&=\tau_{w_{i}} \leq \theta_{2} := \inf \Bigg\{  t> r_0 : \int_{r_0}^{t} \min\left\lbrace g_{1,2}(s),g_{2,1}(s) \right\rbrace ds \geq \nonumber\\ 
	& \qquad\left[\beta_{2} \left( r_{1}e^{-\gamma_{1}r_{0}}+r_{2}e^{-\gamma_{2}r_{0}}\right)^{\beta_{2}} \left(\frac{\epsilon_0}{2^{1+\beta_{2}}}-\frac{ \epsilon_{0}^{\frac{1+\beta_{1}}{\beta_{1}-\beta_{2}}}D_{1}}{\left(r_{1}e^{-\gamma_{1}r_{0}}+r_{2}e^{-\gamma_{2}r_{0}}\right)^{1+\beta_{2}}}\right)\right]^{-1} \Bigg\}, \nonumber
	\end{align}
    for $i=1,2,$ where $r_{1}$ and $r_{2}$ are given in \eqref{nn2}. If $\tau$ is the blow-up time of $u=(u_1,u_2)^{\top},$ then by Lemma \ref{lem1}, we have 
    \begin{align}
    \tau &\leq \min \left\lbrace  (r_{0}+\tau_{m_1})(1+2^{\alpha}), (r_{0}+\tau_{m_2})(1+2^{\alpha}) \right\rbrace \leq (r_{0}+\theta_{2})(1+2^{\alpha}), \nonumber
    \end{align} 
    which completes the proof.
	\end{proof}

\begin{Rem}
	Note that for any $$0<\epsilon_0\leq\min \Bigg\{ 1, \left( h_{2}(r_0)/D_{1}^{1/1+\beta_{2}} \right)^{\beta_{1}-\beta_{2}} ,\left(\frac{r_1e^{-\gamma_1r_0}+r_2e^{-\gamma_2r_0}}{2D_{1}^{1/1+\beta_{2}}}\right)^{\beta_1-\beta_2}\Bigg\},$$ the conditions \eqref{nca3} and \eqref{nca4} are satisfied. 
\end{Rem}
	Next, we extend the results obtained in Lemma 1,  \cite{doz2020} on one dimensional fractional Brownian motion to two dimensions.
	\begin{lemma}\label{l2} Let $0<H<1$ and let $\left( B_{1}^{H}(t),B_{2}^{H}(t) \right)_{t \geq 0} $ be a  two-dimensional fractional Brownian motion with Hurst parameter $H$ defined on a probability space $\left( \Omega, \mathscr{F}, \mathbb{P} \right).$ If $\nu>0,$ then $$\mathbb{P}\left( \int_{0}^{\infty} e^{ B^{H}_{1}(s)+B^{H}_{2}(s)-\nu s} ds < \infty\right)=1.$$ If $\nu<0,$ then $$\mathbb{P}\left( \int_{0}^{\infty} e^{ B^{H}_{1}(s)+B^{H}_{2}(s)-\nu s} ds = \infty\right)=1.$$  
	\end{lemma}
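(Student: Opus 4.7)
The plan is to reduce the two-dimensional statement to the one-dimensional Lemma 1 of \cite{doz2020}. The key observation is that since $B_1^H$ and $B_2^H$ are independent standard fractional Brownian motions with common Hurst index $H$, the sum $Z(t):=B_1^H(t)+B_2^H(t)$ is a centred Gaussian process whose covariance, by independence, is
\begin{equation*}
\mathbb{E}[Z(s)Z(t)]=s^{2H}+t^{2H}-|t-s|^{2H}=2\cdot\tfrac{1}{2}\bigl(s^{2H}+t^{2H}-|t-s|^{2H}\bigr).
\end{equation*}
Hence $Z$ has the same law (as a process) as $\sqrt{2}\,\widetilde B^H$, where $\widetilde B^H$ is a one-dimensional standard fBm with Hurst index $H$. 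Consequently the random variable $\int_0^\infty e^{B_1^H(s)+B_2^H(s)-\nu s}\,ds$ has the same law as $\int_0^\infty e^{\sqrt{2}\,\widetilde B^H(s)-\nu s}\,ds$, so the almost-sure convergence / divergence statements are equivalent.

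Next I would use the self-similarity of fBm, $\widetilde B^H(ct)\stackrel{d}{=}c^H\widetilde B^H(t)$ as processes, with $c=2^{1/(2H)}$, to rewrite $\sqrt{2}\,\widetilde B^H(s)\stackrel{d}{=}\widetilde B^H(2^{1/(2H)}s)$. A change of variable $u=2^{1/(2H)}s$ then gives
\begin{equation*}
\int_0^\infty e^{\sqrt{2}\,\widetilde B^H(s)-\nu s}\,ds \stackrel{d}{=} 2^{-1/(2H)}\int_0^\infty e^{\widetilde B^H(u)-\widetilde\nu\, u}\,du,\qquad \widetilde\nu:=2^{-1/(2H)}\nu.
\end{equation*}
Since the positive prefactor is finite and $\mathrm{sgn}(\widetilde\nu)=\mathrm{sgn}(\nu)$, the one-dimensional Lemma 1 of \cite{doz2020} applied with parameter $\widetilde\nu$ yields: the integral on the right is almost surely finite if $\nu>0$ and almost surely infinite if $\nu<0$, and the same then holds for the original two-dimensional integral.

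The only potential subtlety is that identities in law of the processes must be transferred to integral functionals; this is fine because the integrals are measurable with respect to the path. As a cross-check I would also note the direct argument: the law of the iterated logarithm for fBm implies $s^{-1}B_i^H(s)\to 0$ a.s.\ as $s\to\infty$ (since $H<1$), so for $\nu>0$ there exists a random $s_0$ beyond which the integrand is bounded by $e^{-\nu s/2}$, ensuring convergence, while for $\nu<0$ the exponent $B_1^H(s)+B_2^H(s)-\nu s$ tends to $+\infty$ a.s., forcing the integrand, and hence the integral, to blow up. I do not anticipate a genuine obstacle; the main care point is the correct bookkeeping in the self-similarity rescaling (the exponent $1/(2H)$ rather than $1/H$, because we rescale amplitude by $\sqrt{2}$, not by $2$).
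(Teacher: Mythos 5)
Your proposal is correct, but your primary route differs from the paper's. The paper proves the lemma directly from the law of the iterated logarithm for fBm: it bounds each $B_i^H(t)$ above by $2t^H\sqrt{2\log\log t}$ for large $t$, uses $H<1$ to dominate this by $\nu t/4$, and compares the integrand with $e^{-\nu s/2}$ (and symmetrically, via $B_i^H\overset{d}{=}-B_i^H$, for $\nu<0$) --- essentially the ``cross-check'' you sketch at the end. Your main argument instead reduces the two-dimensional statement to the one-dimensional Lemma 1 of \cite{doz2020}: the covariance computation showing $B_1^H+B_2^H\overset{d}{=}\sqrt{2}\,\widetilde B^H$ is correct, the self-similarity rescaling with $c=2^{1/(2H)}$ and the resulting drift $\widetilde\nu=2^{-1/(2H)}\nu$ are correctly bookkept, and the transfer of equality in law to the path functional is legitimate since the integral is a measurable functional of the continuous path. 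Your reduction is more economical when the one-dimensional lemma is available for arbitrary drift $\nu$ (as it is in \cite{doz2020}), whereas the paper's direct argument is self-contained and extends without change to sums with arbitrary coefficients or more than two components, which is closer to how the lemma is actually invoked later (with coefficients $\rho_1,\rho_2$ on the two fBms). Either proof is acceptable; no gap.
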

	\begin{proof}
	As a consequence of the law of iterated logarithm for fractional Brownian motion \cite{Orey}, we have
	\begin{align}
	\limsup_{t \rightarrow \infty} \frac{B_{i}^{H}(t)}{t^{H}\sqrt{2\log\log t}}= 1,  \  \mathbb{P}\text{-a.s.,}\  \nonumber 
	\end{align}
	for $i=1,2.$ This means that for some $t_{0}>e$ and for all $t \geq t_{0},$ we have  $$B^{H}_{i}(t) \leq 2 t^{H}\sqrt{2\log\log t},\ i=1,2.$$ Assume that $\nu>0.$ Since $H<1,$ there exists $t_{1}> t_{0}$ such that  for each $t \geq t_{1}$ $$2t^{H}\sqrt{2\log\log t} \leq \frac{\nu t}{4}.$$ Therefore, for all $t \geq t_{1},$ we have
	\begin{align}
	\int_{t}^{\infty} e^{ B^{H}_{1}(s)+B^{H}_{2}(s)-\nu s} ds < \int_{t}^{\infty} e^{ \frac{\nu s}{4}+\frac{\nu s}{4}-\nu s} ds =\int_{t}^{\infty} e^{ -\frac{\nu s}{2}} ds \leq \frac{2}{\nu}< \infty,  \  \mathbb{P}\text{-a.s.}  \nonumber
	\end{align}
	Since $B_{1}^{H}$ and $B_{2}^{H}$ have continuous paths, it follows that $$\int_{0}^{\infty} e^{ B^{H}_{1}(s)+B^{H}_{2}(s)-\nu s} ds < \infty,  \  \mathbb{P}\text{-a.s.} $$
	If $\nu <0,$ using the fact that $B_{i}^{H}(t)=-B_{i}^{H}(t)$ in distribution, for all $t > t_{0},$ we have $$B^{H}_{i}(t) \geq -2 t^{H}\sqrt{2\log\log t}, \  \mathbb{P}\text{-a.s.}.$$ Taking $t_{1}>t_{0}$ such that $-\frac{\nu t}{4} > 2t^{H}\sqrt{2\log\log t},$ for all $t>t_{1},$ we have
	\begin{align}
	\int_{t}^{\infty} e^{ B^{H}_{1}(s)+B^{H}_{2}(s)-\nu s} ds &>\int_{t_{1}}^{\infty} e^{ B^{H}_{1}(s)+B^{H}_{2}(s)-\nu s} ds > \int_{t_{1}}^{\infty} e^{ -2t^{H}\sqrt{2\log \log t}-\nu t} ds\nonumber\\ &>\int_{t_{1}}^{\infty} e^{ -\frac{\nu s}{2}} ds = \infty, \  \mathbb{P}\text{-a.s.}  \nonumber
	\end{align}
    which completes the proof. 
	\end{proof}
	In the following theorem, we find upper bounds for the probability of non-explosive positive solution $u=(u_1,u_2)^{\top}$ of the system \eqref{b1}.	
	\begin{theorem}\label{thm2}
	Let $1/2 <H<1$ and $\tau$ be the blow-up time of the system \eqref{b1} with non-negative initial values $f_{i},\ \rho_{i}>0$ in \eqref{a4} for $i=1,2,$  and  $\gamma_{1} \geq \gamma_{2}>0.$ If $k_{1,2}>0 $ (cf. \eqref{49}),  then any positive nontrivial solution of the system \eqref{b1} is  local $\mathbb{P}$-a.s., that is, $\mathbb{P}\left[ \tau< \infty \right]=1$ for all bounded measurable functions $f_{i}, \ i=1,2,$ which does not vanish. Further,
	\begin{enumerate}
	\item [ 1.] If $k_{1,2}<0$ and $\beta_{1}=\beta_{2}=\beta,$  there exists $\eta > r_{0}$ such that for all $t>5(\eta+r_{0}),$   the probability of non-explosion of the system \eqref{b1} is upper bounded by 
	$$ \mathbb{P} [\tau \geq t]\leq \mathbb{P}\left[ \int_{\eta}^{\frac{t}{1+2^{\alpha}}-r_{0}} \min \left\lbrace g_{1,2}(s),g_{2,1}(s) \right\rbrace ds < \frac{2^{(1+\beta)}}{\beta \left( r_{1}e^{-\gamma_{1}r_{0}}+r_{2}e^{-\gamma_{2}r_{0}} \right)^{\beta}} \right]. $$  
	\item [ 2.] If $k_{1,2}<0$ and $\beta_{1}>\beta_{2}>0,$ 
     there exists $\eta > r_{0}$ such that for all $t>5(\eta+r_{0}),$ the probability of non-explosion  of the system \eqref{b1} is upper bounded by 
	\begin{align}
    &\mathbb{P} [\tau \geq t] \\&\leq \mathbb{P}\Bigg\{\int_{\eta}^{\frac{t}{1+2^{\alpha}}-r_{0}} \min \left\lbrace g_{1,2}(s),g_{2,1}(s) \right\rbrace ds \nonumber\\ & \qquad< \left[\beta_{2} \left( r_{1}e^{-\gamma_{1}r_{0}}+r_{2}e^{-\gamma_{2}r_{0}}\right)^{\beta_{2}} \left(\frac{\epsilon_0}{2^{1+\beta_{2}}}-\frac{ \epsilon_{0}^{\frac{1+\beta_{1}}{\beta_{1}-\beta_{2}}}D_{1}}{\left(r_{1}e^{-\gamma_{1}r_{0}}+r_{2}e^{-\gamma_{2}r_{0}}\right)^{1+\beta_{2}}}\right)\right]^{-1} \Bigg\}, \nonumber 
	\end{align}
	\end{enumerate}
	where the functions $g_{1,2},\ g_{2,1}$ and $r_1,\ r_2$ are defined in \eqref{48} and \eqref{nn2} respectively.
	\end{theorem}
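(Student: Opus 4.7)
The plan is to exploit the deterministic upper bound $\tau \leq (r_0+\theta_i)(1+2^\alpha)$ established in Theorem \ref{thm1} (with $\theta_1$ used when $\beta_1=\beta_2$, and $\theta_2$ when $\beta_1>\beta_2$), so that each of the three claims reduces to analyzing the first-passage time $\theta_i$ of the integral $I(t):=\int_{r_0}^t \min\{g_{1,2}(s),g_{2,1}(s)\}\,ds$ past the explicit threshold $C_i$ appearing in Theorem \ref{thm1}. Once this reduction is in place, Part 1 is an a.s.\ statement about $I(\infty)$, and Parts 2--3 are statements about the distribution of $I$ restricted to a finite window.

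For Part 1, I would show $I(\infty)=\infty$ a.s., which forces $\theta_i<\infty$ a.s.\ and hence $\mathbb{P}[\tau<\infty]=1$. The hypothesis $\gamma_1\geq\gamma_2>0$ gives $k_{2,1}=-\gamma_2+(1+\beta_2)\gamma_1\geq \beta_2\gamma_1>0$, so combined with the assumed $k_{1,2}>0$ both drift coefficients are positive. The ratio $g_{1,2}(s)/g_{2,1}(s)$ is non-random (the fBm terms cancel) and monotone for large $s$, so for all sufficiently large $s$ the minimum coincides with one fixed $g_{i^\ast,j^\ast}$. Adapting the law-of-iterated-logarithm argument of Lemma \ref{l2} to the linear combination $\rho_1 B_1^H+\rho_2 B_2^H$, which is still $O(s^H\sqrt{\log\log s})$ a.s.\ and therefore $o(s)$, one obtains $\rho_1 B_1^H(s)+\rho_2 B_2^H(s)\geq -\tfrac{1}{2}k_{i^\ast,j^\ast}\,s$ from some random time onward, whence $g_{i^\ast,j^\ast}(s)\geq \mathrm{const}\cdot e^{(k_{i^\ast,j^\ast}/2)s} s^{-d\beta_{i^\ast}/\alpha}$ eventually, giving divergence of $I(\infty)$.

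For Parts 2 and 3, I would use the inclusion $\{\tau\geq t\}\subseteq\{\theta_i\geq t/(1+2^\alpha)-r_0\}$, and then rewrite the right-hand event via the infimum defining $\theta_i$:
\[
\{\tau\geq t\}\subseteq\left\{\int_{r_0}^{t/(1+2^\alpha)-r_0}\min\{g_{1,2}(s),g_{2,1}(s)\}\,ds<C_i\right\},
\]
with $C_i$ the respective threshold from Theorem \ref{thm1}. Choosing any $\eta>r_0$ and using $\int_\eta^\cdot\leq\int_{r_0}^\cdot$ enlarges the event, yielding the stated bound. The side condition $t>5(\eta+r_0)$ combined with $1+2^\alpha\leq 5$ for $\alpha\leq 2$ ensures that $t/(1+2^\alpha)-r_0>\eta$, so the integration window with lower endpoint $\eta$ is non-degenerate.

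The main obstacle I anticipate is the step in Part 1 where one must deduce divergence of the integral of a minimum: in general, $\min\{f,g\}$ of two a.s.\ non-integrable functions need not be non-integrable. The crucial observation that rescues the argument is that the ratio $g_{1,2}/g_{2,1}$ is pathwise deterministic, so $\min\{g_{1,2},g_{2,1}\}$ coincides (for large $s$) with a single deterministic choice between $g_{1,2}$ and $g_{2,1}$, reducing the question to the one-integrand analysis already available through Lemma \ref{l2}.
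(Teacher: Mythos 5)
Your proposal is correct and follows essentially the same route as the paper: reduce to the bound $\tau \leq (r_0+\theta_i)(1+2^{\alpha})$ from Theorem \ref{thm1}, prove $\mathbb{P}[\tau<\infty]=1$ by showing the integral of the minimum diverges via the iterated-logarithm estimate of Lemma \ref{l2}, and obtain the probability bounds from the event inclusion $\{\tau\geq t\}\subseteq\{\theta_i> t/(1+2^{\alpha})-r_0\}$ together with enlarging the integration window from $[r_0,\cdot]$ to $[\eta,\cdot]$. Your observation that the ratio $g_{1,2}/g_{2,1}$ is deterministic (so the minimum eventually coincides with a single integrand) is in fact a cleaner justification of a step the paper handles by simply asserting which of the two functions realizes the minimum.
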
 
	\begin{proof}
    For $\beta_{1} \geq \beta_{2}>0$ and  $k_{1,2}>0,$ we have $$\exp\{ sk_{1,2} \} \leq \exp\{ sk_{2,1} \}, \ \mbox{for all} \ s\geq 0.$$ We can choose $t_{1}>r_{0}$ for all $s>t_{1}$ such that $$s^{-d \beta_{1}/\alpha} \exp \left\lbrace  sk_{1,2} \right\rbrace >\exp\left\lbrace  \frac{sk_{1,2}}{2} \right\rbrace .$$ Therefore 
	\begin{align}
	& \int_{r_{0}}^{t} 2^{-d(1+\beta_{1})/\alpha} e^{sk_{1,2}+\rho_{1}B_{1}^{H}(s)+\rho_{2}B_{2}^{H}(s)}s^{-d \beta_{1}/\alpha} \wedge 2^{-d(1+\beta_{2})/\alpha}e^{sk_{2,1}+\rho_{1}B_{1}^{H}(s)+\rho_{2}B_{2}^{H}(s)}s^{-d \beta_{2}/\alpha} ds \nonumber\\ 
	&= 2^{-d(1+\beta_{1})/\alpha}\int_{r_{0}}^{t} e^{sk_{1,2}+\rho_{1}B_{1}^{H}(s)+\rho_{2}B_{2}^{H}(s)}s^{-d \beta_{1}/\alpha} ds \nonumber\\ 
	&> 2^{-d(1+\beta_{1})/\alpha}\int_{t_{1}}^{t} e^{sk_{1,2}/2+\rho_{1}B_{1}^{H}(s)+\rho_{2}B_{2}^{H}(s)} ds. \nonumber
	\end{align}
	By Lemma \ref{l2}, we have
	\begin{align}
	\int_{t_{1}}^{t} e^{sk_{1,2}/2+\rho_{1}B_{1}^{H}(s)+\rho_{2}B_{2}^{H}(s)} ds \rightarrow \infty \  \ \mbox{ as } \ t\rightarrow \infty, \ \mathbb{P}\text{-a.s. }\nonumber
	\end{align}
	Hence by Theorem \ref{thm1}, $\mathbb{P}\left[\tau<\infty \right]=1$
and therefore for any nontrivial, positive initial conditions, the solution of system \eqref{s1} exhibits finite-time blow-up $\mathbb{P}$-a.s.  
				
	\noindent\textbf{Case 1:} If $k_{1,2}<0,$ then we can choose $\eta>r_{0}$ such that for all $s>\eta,$ $$\ s^{-d \beta/\alpha} \exp\{ sk_{1,2}\}>\exp\{ 2sk_{1,2}\}.$$ Therefore, by Theorem \ref{thm1} we have for all $t>5(\eta+r_0),$ 
	\begin{align}
	\mathbb{P} [\tau \geq t] &\leq \mathbb{P}\left[\theta_{1}>\frac{t}{1+2^{\alpha}}-r_{0}\right] \nonumber\\
	&= \mathbb{P}\left[ \int_{r_{0}}^{\frac{t}{1+2^{\alpha}}-r_{0}} \min \left\lbrace g_{1,2}(s),g_{2,1}(s) \right\rbrace ds < \frac{2^{(1+\beta)}}{\beta \left( r_{1}e^{-\gamma_{1}r_{0}}+r_{2}e^{-\gamma_{2}r_{0}} \right)^{\beta}} \right] \nonumber\\
	& \leq \mathbb{P}\left[ \int_{\eta}^{\frac{t}{1+2^{\alpha}}-r_{0}} \min \left\lbrace g_{1,2}(s),g_{2,1}(s) \right\rbrace ds < \frac{2^{(1+\beta)}}{\beta \left( r_{1}e^{-\gamma_{1}r_{0}}+r_{2}e^{-\gamma_{2}r_{0}} \right)^{\beta}} \right], \nonumber
	\end{align} 
		where the functions $g_{1,2},\ g_{2,1}$ and $r_1,\ r_2$ are defined in \eqref{48} and \eqref{nn2} respectively.
		
    \vskip 0.2 cm
	\noindent

	\noindent \textbf{Case 2:} If $k_{1,2}<0,$ then we can choose $\eta>r_{0}$ such that for all $s>\eta,$  $$s^{-d \beta_{1}/\alpha} \exp\{ sk_{1,2}\}>\exp\{ 2sk_{1,2}\}.$$ Therefore by Theorem \ref{thm1} we have for all $t>5(\eta+r_0),$ 
	\begin{align}
	\mathbb{P} [\tau \geq t] &\leq \mathbb{P}\left[\theta_{2}>\frac{t}{1+2^{\alpha}}-r_{0}\right] \nonumber\\
	&= \mathbb{P}\Bigg[ \int_{r_{0}}^{\frac{t}{1+2^{\alpha}}-r_{0}} \min \left\lbrace g_{1,2}(s),g_{2,1}(s) \right\rbrace ds \nonumber\\ &\qquad < \left[\beta_{2} \left( r_{1}e^{-\gamma_{1}r_{0}}+r_{2}e^{-\gamma_{2}r_{0}}\right)^{\beta_{2}} \left(\frac{\epsilon_0}{2^{1+\beta_{2}}}-\frac{ \epsilon_{0}^{\frac{1+\beta_{1}}{\beta_{1}-\beta_{2}}}D_{1}}{\left(r_{1}e^{-\gamma_{1}r_{0}}+r_{2}e^{-\gamma_{2}r_{0}}\right)^{1+\beta_{2}}}\right)\right]^{-1} \Bigg] \nonumber\\
	& \leq \mathbb{P}\Bigg[ \int_{\eta}^{\frac{t}{1+2^{\alpha}}-r_{0}} \min \left\lbrace g_{1,2}(s),g_{2,1}(s) \right\rbrace ds \nonumber\\ &\qquad< \left[\beta_{2} \left( r_{1}e^{-\gamma_{1}r_{0}}+r_{2}e^{-\gamma_{2}r_{0}}\right)^{\beta_{2}} \left(\frac{\epsilon_0}{2^{1+\beta_{2}}}-\frac{ \epsilon_{0}^{\frac{1+\beta_{1}}{\beta_{1}-\beta_{2}}}D_{1}}{\left(r_{1}e^{-\gamma_{1}r_{0}}+r_{2}e^{-\gamma_{2}r_{0}}\right)^{1+\beta_{2}}}\right)\right]^{-1} \Bigg], \nonumber
	\end{align} 	 
	where $g_{1,2},\ g_{2,1}$ and $r_1,\ r_2$ are defined in \eqref{48} and \eqref{nn2} respectively.
	\end{proof}

	\section{The case $H=\frac{1}{2}$}\label{se12}
	In this section, we consider the case $H=\frac{1}{2}$ and $(B_1^{\frac{1}{2}}(\cdot),B_1^{\frac{1}{2}}(\cdot))=(W_1(\cdot),W_2(\cdot))$, where $W_1(\cdot)$ and $W_2(\cdot)$ are two independent standard Brownian motions. We are devoted to establish the upper bounds for the probability of non-explosive positive solution $u=(u_1,u_2)^{\top}$ of the system \eqref{re3}. By using random transformations $$v_{i}(t,x) = \exp\{-k_{i1}W_{1}(t)-k_{i2}W_{2}(t)\}u_{i}(t,x),\ i=1,2,$$ for $t \geq 0,\ x \in \mathbb{R}^{d}$,  the system $\eqref{b1}$ is transformed into the following  system of random PDEs:
	\begin{equation}\label{re3}
	\left\{
	\begin{aligned}
	\frac{\partial v_{i}(t,x)}{\partial t}&=\left( \Delta_{\alpha}+\gamma_{i}-\frac{k_{i1}^{2}+k_{i2}^{2}}{2} \right)v_{i}(t,x)+e^{-k_{i1}W_{1}(t)-k_{i2}W_{2}(t)}\left( e^{k_{j1}W_{1}(t)+k_{j2}W_{2}(t)}v_{j}(t,x) \right)^{1+\beta_{i}},\\
	v_{i}(0,x)&=f_{i}(x), \ \  x \in \mathbb{R}^{d},  
	\end{aligned}
	\right.
	\end{equation}
	for $i=1,2, \ j\in \left\{ 1,2 \right\} / \left\{i\right\}.$
	Let $\mathit{{\Gamma}}_{i}=\gamma_{i}-\frac{k_{i1}^{2}+k_{i2}^{2}}{2},$ for $i=1,2.$ The results obtained in previous sections are valid by replacing the constants $\gamma_{1}$ and $\gamma_{2}$ by $\mathit{\Gamma}_{1}$ and  $\mathit{\Gamma}_{2}$ respectively. For $\mathit{\Gamma}_{i} \neq 0, i=1,2,$ the blow-up results are quite similar to the case of $H>\frac{1}{2}.$
	\begin{Rem}
	For $H=1/2$, by replacing the constants $\gamma_{1}$ and $\gamma_{2}$ by $\mathit{\Gamma}_{1}$ and  $\mathit{\Gamma}_{2}$ respectively and proceeding in the same way as in the proof of Theorem \ref{thm1},  we obtain the upper bounds for the blow-up of the system \eqref{re3}. 
	\end{Rem}
	
	Next, we obtain upper bounds for the probability of non-explosive positive solution $u=(u_1,u_2)^{\top}$ of the system \eqref{re3}.
	\begin{theorem} \label{thm5.1}
	Let $\left( W_{1}(t),W_{2}(t) \right)_{t \geq 0}$ be a  two-dimensional Brownian motion and $u=(u_1,u_2)^{\top}$ be a mild solution of the system  $\eqref{b1}$ with bounded measurable functions $f_{i}\geq 0$ for $i=1,2,$  as initial data. If $\mathit{\Gamma}_{1} \geq \mathit{\Gamma}_{2}>0$ and $k_{1,2}>0,$ then any positive nontrivial solution of the system \eqref{b1} is $\mathbb{P}$-a.s. local, that is, $\mathbb{P}\left[ \tau< \infty \right]=1$ for all $f_{i} \geq 0, i=1,2,$ which does not vanish. 
		
		

	\begin{itemize}
	\item [ 1.] If $\beta_{1}=\beta_{2}=\beta, \ \mathit{\Gamma}_{1} \geq \mathit{\Gamma}_{2}>0$ and $k_{1,2}<0,$ there exists $\eta > r_{0}$ such that for all $t>5(\eta+r_{0}),$ the probability of the non-explosion time of the system \eqref{b1} is upper bounded by 
	$$ \mathbb{P} [\tau \geq t]\leq \mathbb{P}\left[ \int_{\eta}^{\frac{t}{1+2^{\alpha}}-r_{0}} \min \left\lbrace g_{1,2}(s),g_{2,1}(s) \right\rbrace ds < \frac{2^{(1+\beta)}}{\beta \left( r_{1}e^{-\gamma_{1}r_{0}}+r_{2}e^{-\gamma_{2}r_{0}} \right)^{\beta}} \right]. $$   
	In particular, 
	$$ \mathbb{P} [\tau = \infty]\leq \mathbb{P}\left[ \int_{\eta}^{\infty} \min \left\lbrace g_{1,2}(s),g_{2,1}(s) \right\rbrace ds < \frac{2^{(1+\beta)}}{\beta \left( r_{1}e^{-\gamma_{1}r_{0}}+r_{2}e^{-\gamma_{2}r_{0}} \right)^{\beta}} \right]. $$
	\item [ 2.] If $\beta_{1}>\beta_{2}>0, \ \mathit{\Gamma}_{1} \geq \mathit{\Gamma}_{2}>0$ and 
	$k_{1,2}<0,$ there exists $\eta > r_{0}$ such that for all $t>5(\eta+r_{0}),$  the probability of non-explosion time of the system \eqref{b1} is upper bounded by 
	\begin{align}
	\mathbb{P} [\tau \geq t] &\leq \mathbb{P}\Bigg\{\int_{\eta}^{\frac{t}{1+2^{\alpha}}-r_{0}} \min \left\lbrace g_{1,2}(s),g_{2,1}(s) \right\rbrace ds <\mathit{N} \Bigg\}. \nonumber 
	\end{align}
	In particular,
	\begin{align}
	\mathbb{P} [\tau = \infty] \leq \mathbb{P}\Bigg\{\int_{\eta}^{\infty} \min \left\lbrace g_{1,2}(s),g_{2,1}(s) \right\rbrace ds < \mathit{N} \Bigg\}, \nonumber 
	\end{align}
	where \begin{align}
	\mathit{N}=\left[\beta_{2} \left( r_{1}e^{-\gamma_{1}r_{0}}+r_{2}e^{-\gamma_{2}r_{0}}\right)^{\beta_{2}} \left(\frac{\epsilon_0}{2^{1+\beta_{2}}}-\frac{ \epsilon_{0}^{\frac{1+\beta_{1}}{\beta_{1}-\beta_{2}}}D_{1}}{\left(r_{1}e^{-\gamma_{1}r_{0}}+r_{2}e^{-\gamma_{2}r_{0}}\right)^{1+\beta_{2}}}\right)\right]^{-1}, \nonumber 
	\end{align}
$r_1,\ r_2$ are defined in Theorem \ref{thm1}  and $$g_{i,j}(s)=2^{\frac{-d(1+\beta_{i})}{\alpha}}e^{k_{i,j}s+\rho_{1}W_{1}(s)+\rho_{2}W_{2}(s)}s^{\frac{-d\beta_{i}}{\alpha}},$$ where $\rho_{1}, \rho_{2}$ are defined in \eqref{a4} and
	\begin{align}\label{kij} 
	k_{i,j}=-\mathit{\Gamma}_{i}+(1+\beta_{i})\mathit{\Gamma}_{j} \ \ \mbox{for} \ i=1,2, \ j\in \left\{ 1,2 \right\} / \left\{i\right\}.
	\end{align} 
	\item [3.]	
	\begin{itemize}
	\item [(a)] If $\beta_{1}=\beta_{2}=\beta$ and $\mathit{\Gamma}_{1}=\mathit{\Gamma}_{2}=0,$ then any non trivial solution $u=(u_1,u_2)^{\top}$ of \eqref{b1} exhibits finite-time blow-up in dimension $d \leq \frac{\alpha}{\beta}.$ 
	\item [(b)]Suppose $\beta_{1}> \beta_{2}>0$ and $\mathit{\Gamma}_{1}=\mathit{\Gamma}_{2}=0,$ then any non trivial solution $u=(u_1,u_2)^{\top}$ of the system \eqref{b1} exhibits finite-time blow-up in dimension $d \leq \frac{\alpha}{\beta_{1}}.$ 
	\end{itemize}
	\end{itemize}
	\end{theorem}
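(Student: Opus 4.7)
The plan is to mirror the argument of Theorem \ref{thm2}, replacing the fractional-Brownian-motion-specific Lemma \ref{l2} by the classical law of iterated logarithm for standard Brownian motion, and to handle the new Fujita-critical case $\mathit{\Gamma}_1=\mathit{\Gamma}_2=0$ separately. Under the random transformation $v_i=e^{-k_{i1}W_1-k_{i2}W_2}u_i$, the original system \eqref{b1} becomes the random PDE system \eqref{re3}, which differs from \eqref{s1} only in that $\gamma_i$ is replaced by $\mathit{\Gamma}_i=\gamma_i-(k_{i1}^2+k_{i2}^2)/2$. Consequently the whole apparatus of Section \ref{sec4} carries over verbatim with this substitution. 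In particular, the $H=1/2$ analogue of Theorem \ref{thm1} yields $\tau\leq (r_0+\theta_j)(1+2^{\alpha})$ for $j=1,2$, where the $\theta_j$ are defined by the same integral threshold inequalities but with $k_{i,j}$ now given by \eqref{kij}.

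For the first unnumbered claim, that $k_{1,2}>0$ forces $\mathbb{P}[\tau<\infty]=1$, I would reduce via Theorem \ref{thm1} to showing that $\int_{r_0}^{\infty}\min\{g_{1,2}(s),g_{2,1}(s)\}\,ds=\infty$ almost surely. Under the hypotheses $\beta_1\geq\beta_2$ and $\mathit{\Gamma}_1\geq\mathit{\Gamma}_2>0$ one checks that $e^{sk_{1,2}}\leq e^{sk_{2,1}}$, so the minimum is controlled (up to a constant) by $g_{1,2}(s)$. The classical law of iterated logarithm for $W_1,W_2$ gives $\rho_1W_1(s)+\rho_2W_2(s)=O(\sqrt{s\log\log s})=o(s)$ almost surely, and so $g_{1,2}(s)\geq \mathrm{const}\cdot e^{k_{1,2}s/2}$ for all sufficiently large $s$, forcing the integral to diverge and hence $\theta_1<\infty$ a.s.

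For items (1) and (2), in which $k_{1,2}<0$, the claimed inequalities follow directly from the inclusion $\{\tau\geq t\}\subseteq\{\theta_j>t/(1+2^{\alpha})-r_0\}$ supplied by Theorem \ref{thm1}, together with unwinding the defining integral threshold of $\theta_j$. The restriction of the integration region from $[r_0,\cdot]$ to $[\eta,\cdot]$ is achieved exactly as in Theorem \ref{thm2}: one chooses $\eta>r_0$ with $s^{-d\beta_i/\alpha}e^{sk_{1,2}}>e^{2sk_{1,2}}$ for $s>\eta$, giving the desired monotone set inclusion between the two integral events. The corresponding bounds on $\mathbb{P}[\tau=\infty]$ are then obtained by letting $t\to\infty$ and applying monotone convergence to the decreasing sequence of events.

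The hard part is item (3), the critical case $\mathit{\Gamma}_1=\mathit{\Gamma}_2=0$. Here $k_{i,j}=0$ for both indices and the exponential-drift argument collapses: the integrand of $g_{i,j}$ reduces to a constant times $s^{-d\beta_i/\alpha}e^{\rho_1W_1(s)+\rho_2W_2(s)}$, with a purely diffusive exponent. One must still establish $\int_{r_0}^{\infty}\min\{g_{1,2}(s),g_{2,1}(s)\}\,ds=\infty$ a.s., but the argument now combines two independent ingredients: first, the recurrence and scaling of the one-dimensional Brownian motion $cW:=\rho_1W_1+\rho_2W_2$ with $c^2=\rho_1^2+\rho_2^2$, which guarantees that $cW(s)$ exceeds any prescribed macroscopic level at arbitrarily large times; second, the Fujita-subcritical hypothesis $d\beta/\alpha\leq 1$ (respectively $d\beta_1/\alpha\leq 1$), which makes the deterministic factor $s^{-d\beta_i/\alpha}$ non-integrable at infinity. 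A Borel--Cantelli argument applied over disjoint dyadic time windows, using the strong Markov property to secure independence of the increments of $cW$, then delivers the required divergence. The conclusion transfers back to $\tau<\infty$ via Theorem \ref{thm1}. This recurrence-plus-Borel--Cantelli step is the delicate core of the proof and is the two-dimensional counterpart of the scalar-case analysis of Dozzi \cite{doz2020} at the Fujita-critical dimension.
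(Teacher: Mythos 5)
Your treatment of the opening claim ($k_{1,2}>0$ implies a.s.\ blow-up) and of items (1) and (2) matches the paper's: the paper simply transports the proof of Theorem \ref{thm2} to the transformed system \eqref{re3} with $\gamma_i$ replaced by $\mathit{\Gamma}_i$, uses the inclusion $\{\tau\geq t\}\subseteq\{\theta_j>\tfrac{t}{1+2^{\alpha}}-r_0\}$ from (the $H=1/2$ analogue of) Theorem \ref{thm1}, restricts the integration range from $r_0$ to $\eta$ exactly as you describe, and lets $t\to\infty$ for the $\mathbb{P}[\tau=\infty]$ bounds. Note that Lemma \ref{l2} is already stated for all $0<H<1$, so your substitution of the ``classical'' law of the iterated logarithm is the same ingredient, not a new one.

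For item (3) you genuinely diverge from the paper. The paper performs the logarithmic time change $s=e^t-1$, which turns $e^{\rho_1W_1(s)+\rho_2W_2(s)}s^{-d\beta/\alpha}\,ds$ into an integral against the Ornstein--Uhlenbeck-type process $Y(t)=\rho_1e^{-t/2}W_1(e^t-1)+\rho_2e^{-t/2}W_2(e^t-1)$; the hypothesis $d\beta/\alpha\leq 1$ absorbs the Jacobian, the integrand is bounded below by $e^{Y(t)}\mathbf{1}_{\{Y(t)\geq 0\}}$, and divergence is supplied by Lemma \ref{la1} (the two-dimensional version of Lemma 17 of \cite{doz2020}), with Lemma \ref{e1} providing the Girsanov-type equivalence of $\mathbb{P}$ and $\mathbb{Q}$ needed to transfer the a.s.\ statement back. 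Your recurrence-plus-Borel--Cantelli scheme over dyadic windows aims at the same divergence directly in the original time scale, which is arguably more elementary and avoids the measure change; but as sketched it has a gap. The events ``$\rho_1W_1+\rho_2W_2\geq 0$ on a fixed fraction of $[4^n,2\cdot4^n]$'' are \emph{not} independent --- each depends on the level $B(4^n)$ reached at the start of the window, not only on the increment over the window --- so the second Borel--Cantelli lemma does not apply as stated. You can repair this either by a conditional (L\'evy-extension) or second-moment Borel--Cantelli argument giving $\mathbb{P}(\limsup A_n)>0$, followed by the observation that $\{\int_T^{\infty}e^{B(s)}s^{-1}ds=\infty\}$ is unchanged by adding a constant to $B$ and hence is a tail event of the increments, so a zero--one law upgrades positive probability to probability one; or, more simply, by invoking the ergodic theorem for the stationary process $e^{-t/2}B(e^t)$ --- which is precisely the route the paper's Lemmas \ref{e1} and \ref{la1} package for you. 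Without one of these repairs the ``independence of the increments'' assertion does not carry the argument.
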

	\begin{proof}
	The proof of parts 1 and 2 in Theorem \ref{thm5.1} can be established by proceeding in the same way as in the proof of Theorem \ref{thm2}. An upper bound for $\mathbb{P}[\tau = \infty]$ in the case of $k_{1,2}<0$ (cf. \eqref{kij}), can be obtained in a similar way.  Also, for the case $k_{1,2}>0,$ (cf. \eqref{kij}), the solution $u=(u_1,u_2)^{\top}$ of \eqref{b1} with $H=\frac{1}{2}$ exhibits blow-up in finite-time $\mathbb{P}$-a.s.   	
		
	Now we prove part 3 of Theorem \ref{thm5.1}. Let us assume that $\mathit{\Gamma}_{i}=0$ for $i=1,2,$  so that  the system of random PDE  in \eqref{re3} becomes: 
    \begin{equation} \label{ran2}
	\left\{
	\begin{aligned}
	\frac{\partial v_{i}(t,x)}{\partial t}&= \Delta_{\alpha}v_{i}(t,x)+e^{-k_{i1}W_{1}(t)-k_{i2}W_{2}(t)}\left( e^{k_{j1}W_{1}(t)+k_{j2}W_{2}(t)}v_{j}(t,x) \right)^{1+\beta_{i}},\\
	v_{i}(0,x)&=f_{i}(x), \ \  x \in \mathbb{R}^{d},  
	\end{aligned}
	\right.
	\end{equation}
	for $i=1,2, \ j\in \left\{ 1,2 \right\} / \left\{i\right\}.$

	The following two Lemmas are useful for the proof of part 3 of Theorem \ref{thm5.1}.  
	\begin{lemma}[Lemma 16, \cite{doz2020}] \label{e1}
	Let $W=\left(W_{1}(t),W_{2}(t)\right)_{t\geq 0}$ be a standard two-dimensional Brownian motion defined on a filtered probability space $\left( \Omega, \mathscr{F}, (\mathscr{F}_{t})_{t \geq 0}, \mathbb{P} \right).$ For $T>0$, the process 
	\begin{align}\label{x1}
	X \equiv \left\lbrace X(t)=X_{1}(t)+X_{2}(t), \  X_{i}(t)=\exp\{-t/2\}W_{i}(e^{t}-1),i=1,2, \ 0 \leq t \leq T \right\rbrace  
	\end{align} 
	is a Brownian motion which is equivalent to $W=\left(W_{1}(t)+W_{2}(t)\right)_{t\geq 0},$ that is, there exists a probability measure $\mathbb{Q}$ on $(\Omega,\mathscr{F})$ having the same null set as $\mathbb{P}$ such that $\left(X(t), \mathcal{M}(t),\mathbb{Q} \right)$ is a Wiener process, where $\mathcal{M}(t)=\sigma\left\lbrace \sigma\left\lbrace X(s), \ s\leq t \right\rbrace \cup \mathcal{N} \right\rbrace, \ 0 \leq t \leq T,$ with $\mathcal{N}=\left\lbrace B\in \mathscr{F}: \ \mathbb{P}(B)=0 \right\rbrace.$    
    \end{lemma}
	\begin{proof}
	For each $s \geq u,$ let $h_{i}(s,u)=\exp\{ (u-s)/2\},$ for $i=1,2.$ Then
	\begin{align} \label{y1}
	Y \equiv \left\lbrace Y(t)=\int_{0}^{t} 2\left[ h_{1}(s,u)dW_{1}(u)+h_{2}(s,u)dW_{2}(u)\right], \ 0 \leq t \leq T \right\rbrace
	\end{align}
	is a centered Gaussian process with covariance function \begin{align} 
	\mathbb{E}[Y(t)Y(s)]&=\mathbb{E}\Bigg[ \left(\int_{0}^{t} 2\left[ h_{1}(s,u)dW_{1}(u)+h_{2}(s,u)dW_{2}(u)\right] \right)\nonumber\\	&\quad\times \left( \int_{0}^{s} 2\left[ h_{1}(s,u)dW_{1}(u)+h_{2}(s,u)dW_{2}(u)\right] \right)  \Bigg] \nonumber\\ 	
	&=e^{\frac{-t-s}{2}}\mathbb{E}\left[ \left( \int_{0}^{t} e^{u/2}\left(dW_{1}(u)+dW_{2}(u) \right) \right) \left( \int_{0}^{s} e^{u/2}\left(dW_{1}(u)+dW_{2}(u) \right)\right) 	\right]	\nonumber\\
	&=e^{\frac{-t-s}{2}}\mathbb{E}\left[ \left( \int_{0}^{s} e^{u/2}\left(dW_{1}(u)+dW_{2}(u) \right) \right)^{2}\right] \nonumber\\
	&\quad+\mathbb{E}\left[\left( \int_{s}^{t}e^{u/2}\left(dW_{1}(u)+dW_{2}(u) \right)\right) \left(  \int_{0}^{s} e^{u/2}\left(dW_{1}(u)+dW_{2}(u) \right)\right) 	\right]	\nonumber\\
	&=2e^{\frac{-t-s}{2}}\left( e^{s}-1\right)  \nonumber\\
	&=2\left[ \exp\left\lbrace -\frac{t}{2}+\frac{s}{2}\right\rbrace -\exp\left\lbrace  -\frac{t}{2}-\frac{s}{2}\right\rbrace \right], \ s\leq t,\nonumber
	\end{align} 
	which is the same as the covariance function of $X$, that is, the covariance function of $X$ is
	\begin{align}
	\mathbb{E}[X(t)X(s)]&=\mathbb{E}\left[\left(X_{1}(t)+X_{2}(t) \right) \left(X_{1}(s)+X_{2}(s) \right)  \right] \nonumber\\   
	&=\mathbb{E}\left[e^{-t/2}W_{1}(e^{t}-1)e^{-s/2}W_{1}(e^{s}-1)  \right] +\mathbb{E}\left[
	e^{-t/2}W_{2}(e^{t}-1)e^{-s/2}W_{2}(e^{s}-1)  \right]  \nonumber\\ 
	&=e^{\frac{-t-s}{2}}[e^{s}-1]+e^{\frac{-t-s}{2}}[e^{s}-1]\nonumber\\      
	&=2\left[ \exp\left\lbrace -\frac{t}{2}+\frac{s}{2}\right\rbrace -\exp\left\lbrace  -\frac{t}{2}-\frac{s}{2}\right\rbrace \right], \ s\leq t. \nonumber 
	\end{align}
	Therefore $X$ and $Y$ are equivalent. We have to prove that $Y$ and $W$ are equivalent. From \eqref{y1}, we have that for $0 \leq t \leq T,$
	\begin{align}
	Y(t)=W_{1}(t)+W_{2}(t)-\int_{0}^{t} \left( \int_{0}^{s} h_{1}(s,u)dW_{1}(u)+h_{2}(s,u)dW_{2}(u)\right) ds. \nonumber
	\end{align} 
	For $0 \leq t \leq T,$ let
	\begin{align}
	M(t)=&\exp \Bigg\{ \int_{0}^{t} \int_{0}^{s} \left[ h_{1}(s,u)dW_{1}(u)+h_{2}(s,u)dW_{2}(u)\right] \left( dW_{1}(s)+dW_{2}(s)\right)\nonumber\\
	&\quad-\frac{1}{2} \int_{0}^{t}2\left( \int_{0}^{s} h_{1}(s,u)dW_{1}(u)+h_{2}(s,u)dW_{2}(u) \right)^{2} ds\Bigg\}. \nonumber
	\end{align}
	Therefore the process $\left\lbrace M(t), \ t \geq 0 \right\rbrace$ is a  local martingale. Then by using the same argument as in Lemma 16, \cite{doz2020}, we have $\left\lbrace M(t), \ t \geq 0 \right\rbrace$ is a martingale and $Y$ and $W$ are equivalent. Hence  $\mathbb{P}$ and $\mathbb{Q}$ are equivalent probability measures.
	\end{proof}
	\begin{lemma}[Lemma 17, \cite{doz2020}]\label{la1}
	Let $\rho_{i}>0$ be given in \eqref{a4} for $i=1,2$ and $Y$ be  given by $$Y(t)=\rho_1\exp\{-t/2\}W_{1}(e^{t}-1)+\rho_2\exp\{-t/2\}W_{2}(e^{t}-1), \ 0 \leq t \leq T.$$ Then
	\begin{align}
	\int^{\ln(T+1)}_{\ln(r_{0}+1)} \exp \left\lbrace Y(t) \right\rbrace \mathbf{1}_{\{Y(t) \geq 0\}} dt \rightarrow \infty \ \mbox{ as } \ T \rightarrow \infty. \nonumber 
	\end{align} 
	\end{lemma}
    \noindent 
	\emph{Proof  of  Theorem  \ref{thm5.1}  (3).}	If $\beta_{1}=\beta_{2}=\beta,$  we obtain
	\begin{align}\label{5.3}
	&\int_{r_{0}}^{T} 2^{-d(1+\beta)/\alpha} e^{\rho_{1}W_{1}(s)+\rho_{2}W_{2}(s)}s^{-d \beta/\alpha} \wedge 2^{-d(1+\beta)/\alpha} e^{\rho_{1}W_{1}(s)+\rho_{2}W_{2}(s)}s^{-d \beta/\alpha} ds\nonumber \\
	&= 2^{-d(1+\beta)/\alpha}\int_{r_{0}}^{T}  e^{\rho_{1}W_{1}(s)+\rho_{2}W_{2}(s)}s^{-d \beta/\alpha} ds.
	\end{align}
	By using change of variables $s=e^{t}-1$ in \eqref{5.3}, we have 
	\begin{align}\label{5.4}
	\int_{\ln(r_{0}+1)}^{\ln(T+1)}  \frac{e^{\rho_{1}e^{t/2}X_{1}(t)+\rho_{2}e^{t/2}X_{2}(t)}}{(e^{t}-1)^{d \beta/\alpha}} e^{t} dt &= \int_{\ln(r_{0}+1)}^{\ln(T+1)}  \frac{e^{\rho_{1}e^{t/2}X_{1}(t)+\rho_{2}e^{t/2}X_{2}(t)+t}}{(e^{t}-1)^{d \beta/\alpha}}  dt \\
	&\geq \int_{\ln(r_{0}+1)}^{\ln(T+1)}  \frac{e^{\rho_{1}X_{1}(t)+\rho_{2}X_{2}(t)+t}}{e^{td \beta/\alpha}} \mathbf{1}_{\{Y(t)=\rho_{1}X_{1}(t)+\rho_{2}X_{2}(t)\geq 0\}} dt \nonumber\\
	&=\int_{\ln(r_{0}+1)}^{\ln(T+1)}  e^{Y(t)-t(d \beta/\alpha-1)} \mathbf{1}_{\{Y(t)\geq 0\}} dt \nonumber\\
	& \geq \int_{\ln(r_{0}+1)}^{\ln(T+1)}  e^{Y(t)} \mathbf{1}_{\{Y(t)\geq 0\}} dt ,\nonumber 
	\end{align} 
	provided $d \beta / \alpha \leq 1.$ 	By Lemma \ref{la1}, we have 
	\begin{align}
	\int_{\ln(r_{0}+1)}^{\ln(T+1)}  e^{Y(t)} \mathbf{1}_{\{Y(t)\geq 0\}} dt \rightarrow \infty \ \mbox{ as } \ T\rightarrow \infty, \  \mathbb{P}\mbox{-a.s.} \nonumber	
	\end{align} 
	Therefore, for each $N_{0}>0$  there exists $r_{0}>0$ such that  
	\begin{align}
	\mathit{A}_{T}^{N_{0}}=\left[ \int_{\ln(r_{0}+1)}^{\ln(T+1)}  e^{Y(t)} \mathbf{1}_{\{Y(t)\geq 0\}} dt>N_{0}\right], \nonumber 
	\end{align}   
for all $T>r_{0}$ and
	\begin{align}
	\mathbb{Q}\left[ \liminf_{T\rightarrow \infty} \left( \int_{\ln(r_{0}+1)}^{\ln(T+1)}  \frac{e^{\rho_{1}e^{t/2}X_{1}(t)+\rho_{2}e^{t/2}X_{2}(t)+t}}{(e^{t}-1)^{d \beta/\alpha}}  dt > N_{0}\right)  \right] \geq \mathbb{Q}\left[ \liminf_{T\rightarrow \infty} \left(\mathit{A}_{T}^{N_{0}}  \right)  \right]=1. \nonumber
	\end{align}
	By Lemma \ref{e1}, the probability measures $\mathbb{P}$ and $\mathbb{Q}$ are equivalent. Therefore 
	\begin{align}
	\mathbb{P}\left[ \liminf_{T\rightarrow \infty} \left( \int_{r_{0}}^{T} \min \left\lbrace g_{1,2}(s),g_{2,1}(s) \right\rbrace ds>N_{0} \right)  \right]=1, \nonumber
	\end{align} 
	and we conclude that for any nontrivial positive initial values, the system of random PDEs \eqref{ran2} exhibits a finite-time blow-up for dimension $d \leq \alpha/\beta.$ \\\\ \indent 
	If $\beta_{1}>\beta_{2},$ then we obtain
	\begin{align}\label{l5}
	&\int_{r_{0}}^{T}  2^{-d(1+\beta_{1})/\alpha}e^{\rho_{1}W_{1}(s)+\rho_{2}W_{2}(s)}s^{-d \beta_{1}/\alpha} \wedge 2^{-d(1+\beta_{2})/\alpha}e^{\rho_{1}W_{1}(s)+\rho_{2}W_{2}(s)}s^{-d \beta_{2}/\alpha} ds\nonumber \\
	&= 2^{-d(1+\beta_{1})/\alpha} \int_{r_{0}}^{T}  e^{\rho_{1}W_{1}(s)+\rho_{2}W_{2}(s)}s^{-d \beta_{1}/\alpha} ds.
	\end{align}
	Following similar   arguments as above,  for $N_{1}>0$, we obtain
	\begin{align}
	\mathbb{P}\left[ \liminf_{T\rightarrow \infty} \left( \int_{r_{0}}^{T} \min \left\lbrace g_{1,2}(s),g_{2,1}(s) \right\rbrace ds>N_{1} \right)  \right]=1, \nonumber
	\end{align} 
	so that	 for any nontrivial positive initial values, the system of random PDEs \eqref{ran2} exhibits a finite-time blow-up for dimension $d \leq \alpha/\beta_{1}.$
	\end{proof}	
	\begin{Rem}
	For the case $H=\frac{1}{2},\ \alpha=2,\ \beta_{1}=\beta_{2}=\beta$, the result of part 1 of Theorem 5.1 coincide with Theorem 4.2 of \cite{li}.
	\end{Rem}
	
    \section{Existence of global solutions} \label{sec5}
    In this section, we provide a sufficient condition for the existence of global mild solution of the system \eqref{b1} for $1/2 \leq H<1$ such that \eqref{a4} holds. For $i=1,2,$ let
    \begin{align}\label{mn1}
    \mathit{N}_{i}=\left\{ \begin{array}{rcl}
    \gamma_{i}, \ \ \ \ \ & H> 1/2,\\\\
    \gamma_{i}- \frac{k_{i1}^{2}+k_{i2}^{2}}{2},&  H=1/2.
    \end{array}\right.
    \end{align}
    
    \begin{theorem} \label{tt1}
    	Let $1/2 \leq H<1.$ If the initial values of the form $f_{1}=C_{1}\psi, f_{2}=C_{2}\psi,$	where $C_{1}$ and $C_{2}$ are any positive constants with $C_{1}\leq C_{2}$ and $\psi \in C_{c}^{\infty}({\mathbb R}^d)$ satisfy the inequalities 
    	\begin{align}\label{cd1}
    	\beta_{i} \int_{0}^{\infty}\exp\{\rho_{1} B^{H}_{1}(r)+\rho_{2} B^{H}_{2}(r)\}\left\| \exp\{ \mathit{N}_{i} r \}S_{r}f_{i} \right\|_{\infty}^{\beta_{i}}dr<1, \ i=1,2,
    	\end{align} 
    	then the mild solution  $u=(u_1,u_2)^{\top}$ of \eqref{b1} is global for all $(t,x) \in [0,\infty) \times \mathbb{R}^{d}.$ Moreover, for all $(t,x) \in [0,\infty) \times \mathbb{R}^{d}$
    	\begin{align*}
    	u_{i}(t,x) \leq \frac{\exp\{k_{i1}B^{H}_{1}(t)+k_{i2}B^{H}_{2}(t)+\mathit{N}_{i}t\}S_{t}f_{i}(x)}{\left(1-\beta_{i} \int_{0}^{\infty}\exp\{\rho_{1} B^{H}_{1}(r)+\rho_{2} B^{H}_{2}(r)\}\left\| \exp\{ \mathit{N}_{i}r \}S_{r}f_{i} \right\|_{\infty}^{\beta_{i}}dr \right)^{\frac{1}{\beta_{i}}}}, \ i=1,2. 
    	\end{align*} 
    	where $\rho_1,\ \rho_2$ are defined in \eqref{a4} and $\mathit{N}_{1},\mathit{N}_{2}$ are given  in \eqref{mn1}.
    \end{theorem}
    The proof of Theorem \ref{tt1} follows from Theorem \ref{t2} and Theorem 3.a in \cite{weisser}. \textcolor{blue}{The following provides an example which validates the condition \eqref{cd1} for the system \eqref{b1}.}    
    
    \textcolor{blue}{\begin{Ex} By properties (1) and (2) of Lemma \ref{l1}, we have
    \begin{align}\label{Bcd4}
   	\left\|S_{r}f_{i} \right\|_{\infty}^{\beta_{i}}&=\left( \sup_{x \in \mathbb{R}^{d}} \int_{\mathbb{R}^{d}}p(r,y-x)f_{i}(y)dy\right)^{\beta_{i}}\nonumber\\
   	&=r^{-d \beta_{i}/\alpha} \left( \sup_{x \in \mathbb{R}^{d}} \int_{\mathbb{R}^{d}}p(1,r^{-1/\alpha}(y-x))f_{i}(y)dy\right)^{\beta_{i}}\nonumber\\
   	& \leq r^{-d \beta_{i}/\alpha} p(1,0)^{\beta_{i}}\left\| f_{i} \right\|_{1}^{\beta_{i}},\ i=1,2. 
    \end{align}
    Using \eqref{Bcd4} in \eqref{cd1}, we have 
    \begin{align}\label{Bc1}
    	\beta_{i} &\int_{0}^{\infty}\exp\{\rho_{1} B^{H}_{1}(r)+\rho_{2} B^{H}_{2}(r)\}\left\| \exp\{ \mathit{N}_{i}r \}S_{r}f_{i} \right\|_{\infty}^{\beta_{i}}dr\nonumber\\
    	&\qquad\leq \beta_{i} p(1,0)^{\beta_{i}} \left\| f_{i} \right\|_{1}^{\beta_{i}}  \int_{0}^{\infty}\exp\{\rho_{1} B^{H}_{1}(r)+\rho_{2} B^{H}_{2}(r)+ \mathit{N}_{i} \beta_{i} r\} r^{-d \beta_{i} /\alpha} dr.
    \end{align} 
    If $\beta_{1}=\beta_{2}=1, k_{11}=k_{12}=k_{21}=k_{22}=1$, $\gamma_{1}=\gamma_{2}=\gamma$ and using \eqref{mn1}, then \eqref{Bc1} becomes
    \begin{align}\label{Bc2}
    	\beta_{i} &p(1,0)^{\beta_{i}} \left\| f_{i} \right\|_{1}^{\beta_{i}}  \int_{0}^{\infty}\exp\{\rho_{1} B^{H}_{1}(r)+\rho_{2} B^{H}_{2}(r)+ \mathit{N}_{i} \beta_{i} r\} r^{-d \beta_{i} /\alpha} dr\nonumber\\
    	& \leq p(1,0) \left\| f_{i} \right\|_{1}  \int_{0}^{\infty}\exp\{ B^{H}_{1}(r)+ B^{H}_{2}(r)+ \gamma r\} r^{-d /\alpha} dr.
    \end{align} 
    Let us denote 
    \begin{align}
    B^H_{\ast}(t_{1}) = \sup_{0 \leq s \leq t_{1}} |B^{H}_{1}(s)+B^{H}_{2}(s)|,\ \mbox{ for each }\ t_{1} \geq 0. \nonumber 
    \end{align}
    Therefore,
    \begin{align}
    \int_{0}^{t_{1}}e^{ B^{H}_{1}(r)+ B^{H}_{2}(r)+ \gamma r} r^{-d /\alpha} dr &\leq  e^{B^{H}_{\ast}(t_{1})}\int_{0}^{t_{1}}e^{\gamma r} r^{-d /\alpha} dr\leq  e^{ B^{H}_{\ast}(t_{1})}\int_{0}^{t_{1}}e^{2\gamma r} r^{-d /\alpha} dr.
    \end{align} 
    As a consequence of the law of iterated logarithm for fractional Brownian motions \cite{Orey}, we get
    \begin{align}
    \limsup_{t \rightarrow \infty} \frac{B_{i}^{H}(t)}{t^{H}\sqrt{2\log\log t}}= 1,  \  \mathbb{P}\text{-a.s.,}\ \mbox{ for } i=1,2.  \nonumber 
    \end{align}
     This means that for some $t_{0}>e$ and for all $t \geq t_{0},$ we have  $$B^{H}_{i}(t) \leq 2 t^{H}\sqrt{2\log\log t},\ i=1,2.$$ Assume that $\gamma>0.$ Since $H<1,$ there exists $t_{1}> t_{0}$ such that  for each $t \geq t_{1}$ $$2t^{H}\sqrt{2\log\log t} \leq \frac{\gamma t}{2}.$$ Therefore, for all $t \geq t_{1},$ we obtain
    \begin{align}
    \int_{t_{1}}^{\infty}e^{ B^{H}_{1}(r)+ B^{H}_{2}(r)+ \gamma r} r^{-d /\alpha} dr \leq 	\int_{t_{1}}^{\infty}e^{2\gamma r} r^{-d /\alpha} dr. 
    \end{align}
    Note that the improper integral $\int_{0}^{\infty}e^{ 2\gamma r} r^{-d /\alpha} dr$  converges, if $d/\alpha>1$ and let us take $C$ to be the value of the integral. Choosing  $f_{1}(x)=f_{2}(x)=\frac{\psi(x)}{2Cp(1,0)e^{ B^{H}_{\ast}(t_{1})}}, \ x \in \mathbb{R}^{d}$ with $\|\psi\|_{1}=1,$ we get 
    \begin{align*}
    p(1,0)& \left\| f_{i} \right\|_{1} \left(  \int_{0}^{t_{1}}e^{ B^{H}_{1}(r)+ B^{H}_{2}(r)+ \gamma r} r^{-d /\alpha} dr+\int_{t_{1}}^{\infty}e^{ B^{H}_{1}(r)+ B^{H}_{2}(r)+ \gamma r} r^{-d /\alpha} dr\right) \nonumber\\
    &\leq p(1,0) \frac{\left\| \psi \right\|_{1}}{2Cp(1,0)e^{ B^{H}_{\ast}(t_{1})}} \left(  e^{ B^{H}_{\ast}(t_{1})}\int_{0}^{t_{1}}e^{2\gamma r} r^{-d /\alpha} dr+\int_{t_{1}}^{\infty}e^{2\gamma r} r^{-d /\alpha} dr \right)\\
    &\leq  \frac{\left\| \psi \right\|_{1}}{2}<1,
    \end{align*}
    which verifies the condition \eqref{cd1}. 
    \end{Ex}}
    \begin{Rem}
    We have to provide the upper bounds for the probability that the solution $u=(u_1,u_2)^{\top}$ does not blow-up for $1/2 \leq H<1$ with the parameter \eqref{mn1}. First, we find the upper bounds by using the same argument as in Theorem \ref{thm1}.
    	
    	By using Theorem \ref{thm2}, if $\beta_{1}=\beta_{2}=\beta,$ then an upper bound of the probability that the solution $u=(u_1,u_2)^{\top}$ of the system \eqref{b1} does not blow-up  is given by   
    	\begin{align}
    	\mathbb{P} [\tau = +\infty]&=\mathbb{P} [\tau \geq t \ \mbox{ for all } \ t] \nonumber\\
    	&\leq \mathbb{P}\left[\theta_{1}>\frac{t}{1+2^{\alpha}}-r_{0} \ \ \mbox{ for all } \ t \right] \nonumber\\
    	&= \mathbb{P}\left[ \int_{r_{0}}^{\infty} \min \left\lbrace g_{1,2}(s),g_{2,1}(s) \right\rbrace s^{-d \beta/\alpha}ds < \frac{2^{(1+\beta)}}{\beta \left( r_{1}e^{-\gamma_{1}r_{0}}+r_{2}e^{-\gamma_{2}r_{0}} \right)^{\beta}} \right], \nonumber
    	\end{align}
    	where $g_{i,j}$ and $r_{i}$ are defined in \eqref{48} and \eqref{nn2} respectively, and one has to replace $\gamma_{i}$ by $\mathit{N}_{i}$ in $g_{i,j}$ for $i=1,2, \ j\in \left\{ 1,2 \right\} / \left\{i\right\}.$ 
    	
    	If $\beta_{1}>\beta_{2},$ then the probability that the solution $u=(u_1,u_2)^{\top}$ of \eqref{b1} does not blow-up is upper bounded by   
    	\begin{align}
    	\mathbb{P} [\tau = +\infty]&=\mathbb{P} [\tau \geq t \ \mbox{ for all } \ t] \nonumber\\
    	&\leq \mathbb{P}\left[\theta_{2}>\frac{t}{1+2^{\alpha}}-r_{0} \ \ \mbox{ for all } \ t \right] \nonumber\\
    	&=\mathbb{P}\Bigg\{ \int_{r_{0}}^{+\infty} \min \left\lbrace g_{1,2}(s),g_{2,1}(s) \right\rbrace ds \nonumber\\ & \qquad< \left[\beta_{2} \left( r_{1}e^{-\gamma_{1}r_{0}}+r_{2}e^{-\gamma_{2}r_{0}}\right)^{\beta_{2}} \left(\frac{\epsilon_0}{2^{1+\beta_{2}}}-\frac{ \epsilon_{0}^{\frac{1+\beta_{1}}{\beta_{1}-\beta_{2}}}D_{1}}{\left(r_{1}e^{-\gamma_{1}r_{0}}+r_{2}e^{-\gamma_{2}r_{0}}\right)^{1+\beta_{2}}}\right)\right]^{-1} \Bigg\}, \nonumber
    	\end{align}
    		where $g_{i,j}$ and $r_{i}$ are defined in \eqref{48} and \eqref{nn2} respectively, and  $\gamma_{i}$ in $g_{i,j}$ has to be replaced by $\mathit{N}_{i}$  for $i=1,2, \ j\in \left\{ 1,2 \right\} / \left\{i\right\}.$

    	Next, we find the lower bound for the probability that the solution $u=(u_1,u_2)^{\top}$ does not blow-up. 
        By using \eqref{Bcd4}, we get
    	\begin{align}
    	&\int_{0}^{t} \exp\{\rho_{1} B^{H}_{1}(r)+\rho_{2}B^{H}_{2}(r)+\mathit{N}_{i}\beta_{1}r\}\left\|S_{r}f_{i} \right\|_{\infty}^{\beta_{i}}dr \nonumber\\
    	&\qquad \qquad \leq p(1,0)^{\beta_{i}}\left\| f_{i} \right\|_{1}^{\beta_{i}}\int_{0}^{t} \exp\{\rho_{1} B^{H}_{1}(r)+\rho_{2}B^{H}_{2}(r)+\mathit{N}_{i}\beta_{1}r\}r^{-d \beta_{i}/\alpha} dr.  \nonumber
    	\end{align}
    	By Theorem \ref{t2}, we have 
    	\begin{align}
    	\mathbb{P} [\tau_{\ast} = +\infty] &= \mathbb{P}\left[ \int_{0}^{t} \exp\{\rho_{1} B^{H}_{1}(r)+\rho_{2}B^{H}_{2}(r)+\mathit{N}_{i}\beta_{i}r\}\left\|S_{r}f_{i} \right\|_{\infty}^{\beta_{i}}dr 
    	< \min \Bigg\{ \frac{1}{\beta_{1}},  \frac{1}{\beta_{2}} \Bigg\} \right] \nonumber\\
    	&\geq \mathbb{P}\Bigg[\int_{0}^{t} \exp\{\rho_{1} B^{H}_{1}(r)+\rho_{2}B^{H}_{2}(r)+\mathit{N}_{i}r\}r^{\frac{-d \beta_{i}}{\alpha}} dr \nonumber\\ 
    	&\qquad \qquad < \min \Bigg\{ \frac{1}{p(1,0)^{\beta_{1}}\left\| f_{1} \right\|_{1}^{\beta_{1}}\beta_{1}},  \frac{1}{p(1,0)^{\beta_{2}}\left\| f_{2} \right\|_{1}^{\beta_{2}}\beta_{2}} \Bigg\} \Bigg]. \nonumber
    	\end{align} 
    \end{Rem} 
    
    \begin{Rem}
    	For the case $H=\frac{1}{2},\ \rho_{i}>0\ \mbox{and}\  \gamma_{i}=\frac{k_{i1}^{2}+k_{i2}^{2}}{2},\ i=1,2$, we have $\mathit{N}_{i}=0,\ i=1,2.$ Sufficient condition for the global existence of the solution $u=(u_1,u_2)^{\top}$ of the system \eqref{b1} is given by Theorem \ref{tt1}. Under the assumptions 
    	\begin{align}\label{cd6}
    	d > \alpha/\beta_{2}\ \mbox{and}\ 0 \leq f_{i}(x) \leq C_{i}p(\zeta,x) \leq C_{2} p(\zeta,x),\ i=1,2, \ x \in \mathbb{R}^{d},
    	\end{align} 
    	where $\zeta>0, C_{1}$ and $C_{2}$ are suitably chosen constants with $C_{1} \leq C_{2}$, conditions \eqref{cd1} show that the improper integrals $ \displaystyle\int_{\varepsilon}^{\infty} \frac{e^{\rho_{1}W_{1}(s)+\rho_{2}W_{2}(s)}}{s^{d \beta_{i}/\alpha}} ds$  converge for $i=1,2$ and for some $\varepsilon >0.$ By Lemma \ref{l1}, we have
    	\begin{align}
    	S_{r}f_{i}(x)&= \int_{\mathbb{R}^{d}} p(r,x-y)f_{i}(y)dy \leq C_{2} \int_{\mathbb{R}^{d}} p(r,x-y)p(\zeta,y)dy= C_{2} p(r+\zeta,x) \nonumber \\
    	&=C_{2}(r+\zeta)^{-d/\alpha}p(1,(r+\zeta)^{-1/\alpha}x).\nonumber 
    	\end{align} 
    	Therefore,
    	\begin{align}
    	\int_{0}^{\infty}e^{\rho_{1} W_{1}(r)+\rho_{2} W_{2}(r)}\left\|S_{r}f_{i} \right\|_{\infty}^{\beta_{i}}dr&\leq C_{2}^{\beta_{i}} p(1,0)^{\beta_{i}}\int_{0}^{\infty}e^{\rho_{1} W_{1}(r)+\rho_{2} W_{2}(r)}(r+\zeta)^{-d \beta_{i}/\alpha}dr \nonumber\\
    	&=C_{2}^{\beta_{i}} p(1,0)^{\beta_{i}}\int_{0}^{\infty}e^{\rho_{1} W_{1}(e^{t}-1)+\rho_{2} W_{2}(e^{t}-1)+t}(e^{t}-1+\zeta)^{-d \beta_{i}/\alpha}dt, \nonumber 
    	\end{align}
    	for $i=1,2.$ In view of Lemma \ref{e1}, we need to estimate the finiteness of the integral $\int_{\varepsilon}^{\infty}\exp\{\rho_{1}e^{t/2}X_{1}(t)+\rho_{2}e^{t/2}X_{2}(t)-b_{i} t\} dt $ for any $\varepsilon >0$ and $b_{i}=d \beta_{i}/\alpha-1, i=1,2.$ However, it is shown in Lemma \ref{la3} below that this integral  diverges for any $\varepsilon>0$, even if $d>\alpha/\beta_{2}.$ This shows that the stochastic perturbation in the system \eqref{b1} qualitatively affects the behavior of solution. The  reason is that $\Delta_{\alpha}$ has not enough dissipativity. Dissipativity can be increased by replacing $\Delta_{\alpha}$ by a non autonomous differential operator of the type $e^{t}\Delta_{\alpha},$ which transforms \eqref{cd1} into
    	\begin{align}\label{cd5}
    	\int_{s}^{\infty}\exp\{\rho_{1} B^{H}_{1}(r)+\rho_{2} B^{H}_{2}(r)\}\left\| \exp\{ \mathit{N}_{i} r \}\mathcal{U}(r,s)f_{i} \right\|_{\infty}^{\beta_{i}}dr<1, \ i=1,2,
    	\end{align}
    	for some $s \geq 0,$ where $\left\lbrace \mathcal{U}(t,s);\ 0 \leq s \leq t\right\rbrace$ is the two parameter semigroup associated with this operator and is given by $\mathcal{U}(t,s)=S_{e^{t}-e^{s}}.$ By  using \eqref{Bcd4}, a simple calculation shows that \eqref{cd5} can be satisfied trajectory wise for  $\mathit{N}_{i}=0, i=1,2$, $H=1/2$ and for a  suitable choice of initial data $f_{i}, i=1,2$ as in \eqref{cd6}. For such a choice, the solution exist globally due to Lemma \ref{l2}.    	
    \end{Rem}    
    \begin{lemma}\label{la3}
    	If $\rho_{1}>0$ and $\rho_{2}>0,$ then the integral $\int_{\varepsilon}^{\infty}\exp\{\rho_{1}e^{t/2}X_{1}(t)+\rho_{2}e^{t/2}X_{2}(t)-b_{i}t\} dt ,$ where $b_{i}= \frac{d\beta_{i}}{\alpha}-1, i=1,2$ diverges for any $\varepsilon>0.$
    \end{lemma}
    \begin{proof}
    	Suppose that $\int_{\epsilon}^{\infty}\exp\{\rho_{1}e^{t/2}X_{1}(t)+\rho_{2}e^{t/2}X_{2}(t)-b_{i}t\} dt <\infty.$ This implies that $\rho_{1}e^{t/2}X_{1}(t)+\rho_{2}e^{t/2}X_{2}(t) \leq td \beta_{i}/\alpha$ for all $t$ sufficiently large. Since $\displaystyle e^{t/2}/t \rightarrow \infty$ as $t \rightarrow \infty,$ this would imply that $\rho_1 X_{1}(t)+\rho_2X_2(t) \rightarrow 0$ as $t \rightarrow \infty,$ which is not possible. Therefore $\rho_{1}e^{t/2}X_{1}(t)+\rho_{2}e^{t/2}X_{2}(t) > td \beta_{i}/\alpha,$ and hence  $\rho_{1}e^{t/2}X_{1}(t)+\rho_{2}e^{t/2}X_{2}(t) - b_{i}t>t.$ Therefore, the integral diverges.
    \end{proof}
    
   \section{A More General Case}\label{sec6}
    In this section, we consider the system \eqref{b1} with  parameters $1/2 \leq H<1$ and \eqref{mn1}. We find the lower and upper bounds to the system \eqref{b1} without the conditions given in \eqref{a4}.  From \eqref{e3}, we have
    \begin{align}
    v_{i}(t,x)&=S_{t}f_{i}(x)\nonumber\\
    &\quad+\int_{0}^{t} S_{t-r}\left[\mathit{N}_{i}v_{i}(s,\cdot)+ e^{-k_{i1}B^{H}_{1}(r)-k_{i2}B^{H}_{2}(r)}\left(e^{k_{j1}B^{H}_{1}(r)+k_{j2}B^{H}_{2}(r)}v_{j}(r,\cdot)\right)^{1+\beta_{i}} \right](x)dr. \nonumber
    \end{align}
    for $i=1,2,\ \left\lbrace j \right\rbrace=\left\lbrace1,2\right\rbrace/\{i\}$ and $\mathit{N}_{i}$ are defined in \eqref{mn1}. 
    
    Next result gives the lower bounds for finite-time blow-up of the solution $u=(u_1,u_2)^{\top}$ of the system \eqref{b1} when  $\gamma_{1}=\gamma_{2}=\lambda$. Hence $T_{1}(t)=T_{2}(t)=T(t)\ (say)$ which are defined in \eqref{op1}.
    \begin{theorem}\label{thm6.1}
    If $\beta_{1}, \beta_{2}$ are positive constants and the initial values are of the form $f_{1}=C_{1}\psi, f_{2}=C_{2}\psi$	where $C_{1}$ and $C_{2}$ are any positive constants with $C_{1}\leq C_{2}$ and $\psi \in C_{c}^{\infty}({\mathbb R}^d).$ Then $\tau_{\ast} \leq \tau,$ where $\tau_{\ast}$ is given by 
    	\begin{align}\label{7.1}
    	\tau_{\ast} = \inf \Bigg\{ t\geq 0 : &\int_{0}^{t}   e^{(1+\beta_{1})k_{21}-k_{11} B^{H}_{1}(r)+(1+\beta_{1})k_{22}-k_{12}B^{H}_{2}(r)+\lambda \beta_{1}r}r^{-d \beta_{1}/\alpha}dr \geq \frac{1}{\beta_{1}p(1,0)^{\beta_{1}}C_{1}^{\beta_{1}}\left\|\psi \right\|_{\infty}^{\beta_{1}}}, \nonumber\\ \mbox{or} \
    	&\int_{0}^{t}   e^{(1+\beta_{2})k_{11}-k_{21} B^{H}_{1}(r)+(1+\beta_{2})k_{12}-k_{22}B^{H}_{2}(r)+\lambda \beta_{2}r}r^{-d \beta_{2}/\alpha}dr \geq \frac{1}{\beta_{1}p(1,0)^{\beta_{1}}C_{1}^{\beta_{1}}\left\|\psi \right\|_{\infty}^{\beta_{1}}} \Bigg\}. 
    	\end{align}
    \end{theorem}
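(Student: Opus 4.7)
The plan is to adapt the proof scheme of Theorem \ref{t2} verbatim in structure, replacing the common exponents $\rho_1,\rho_2$ (which required the constraint \eqref{a4}) by the equation-specific coefficients that appear naturally in the mild formulation of \eqref{s1}. With $\gamma_1=\gamma_2=\lambda$, the mild solutions $v_1,v_2$ satisfy
\begin{align*}
v_1(t,x)&=T(t)f_1(x)+\int_0^t T(t-r)\bigl(e^{[(1+\beta_1)k_{21}-k_{11}]B_1^H(r)+[(1+\beta_1)k_{22}-k_{12}]B_2^H(r)}v_2^{1+\beta_1}(r,x)\bigr)dr,\\
v_2(t,x)&=T(t)f_2(x)+\int_0^t T(t-r)\bigl(e^{[(1+\beta_2)k_{11}-k_{21}]B_1^H(r)+[(1+\beta_2)k_{12}-k_{22}]B_2^H(r)}v_1^{1+\beta_2}(r,x)\bigr)dr.
\end{align*}
I would introduce two operators $\mathcal{J}_1,\mathcal{J}_2$ analogous to those in the proof of Theorem \ref{t2}, but carrying these two distinct exponential weights, one for each equation.

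Next I would define the scalar auxiliary functions $\mathscr{G}_1(t)$ and $\mathscr{G}_2(t)$ by the same formula as in Theorem \ref{t2}, now with the weights $[(1+\beta_1)k_{21}-k_{11}]B_1^H(r)+[(1+\beta_1)k_{22}-k_{12}]B_2^H(r)$ inside $\mathscr{G}_1$ and the symmetric expression inside $\mathscr{G}_2$. By the definition of $\tau_{\ast}$ in \eqref{7.1}, both $\mathscr{G}_i(t)$ remain finite on $[0,\tau_{\ast})$ and satisfy integral equations of the form $\mathscr{G}_i(t)=1+\int_0^t e^{(\cdots)}\|T(r)f_i\|_\infty^{\beta_i}\mathscr{G}_i^{1+\beta_i}(r)\,dr$.

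I would then run the same monotone iteration: set $u_1^{(0)}(t,x)=T(t)f_1(x)$, $u_2^{(0)}(t,x)=(C_1/C_2)T(t)f_2(x)$, and $u_i^{(n)}=\mathcal{J}_i u_j^{(n-1)}$ for $n\geq 1$, $j\ne i$. By induction, using the semigroup contraction of $T$ and the inequality $T(t-r)(T(r)f_i)\le T(t)f_i$, I obtain $T(t)f_i(x)\le u_i^{(n)}(t,x)\le T(t)f_i(x)\mathscr{G}_i(t)$. Monotonicity in $n$ follows from the ordering $C_1\leq C_2$, and the monotone convergence theorem yields limits $v_1,v_2$ satisfying the fixed-point equations $v_i=\mathcal{J}_i v_j$. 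Finally, I would invoke Lemma \ref{l1}(1) to rewrite $\|T(r)f_i\|_\infty^{\beta_i}=e^{\lambda\beta_i r}r^{-d\beta_i/\alpha}p(1,0)^{\beta_i}C_i^{\beta_i}\|\psi\|_\infty^{\beta_i}$ and plug this into the condition $\mathscr{G}_i(t)<\infty$, which reproduces exactly the two integral conditions in the definition of $\tau_{\ast}$.

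The main obstacle, compared with Theorem \ref{t2}, is that the two operators $\mathcal{J}_1,\mathcal{J}_2$ now carry \emph{different} exponential weights, so one can no longer factor out a common $e^{\rho_1 B_1^H+\rho_2 B_2^H}$ when estimating a composition $\mathcal{J}_i\mathcal{J}_j$. The careful bookkeeping is to verify that the invariant cone $\{v:0\le v(t,x)\le T(t)f_i(x)\mathscr{G}_i(t)\}$ is preserved by $\mathcal{J}_i$ using only the exponential weight associated with its own equation, which works precisely because the weight inside $\mathcal{J}_i$ matches the weight defining $\mathscr{G}_i$; all other arguments (iteration, monotonicity, limit passage) are routine adaptations of Theorem \ref{t2}.
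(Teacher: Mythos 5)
Your proposal is correct and follows essentially the same route as the paper: the paper's proof of this theorem simply redefines the operators $\mathcal{J}_1,\mathcal{J}_2$ and the auxiliary functions $\mathscr{G}_1,\mathscr{G}_2$ with the equation-specific exponential weights $[(1+\beta_1)k_{21}-k_{11}]B_1^H+[(1+\beta_1)k_{22}-k_{12}]B_2^H$ and $[(1+\beta_2)k_{11}-k_{21}]B_1^H+[(1+\beta_2)k_{12}-k_{22}]B_2^H$, and then repeats the monotone-iteration and cone-preservation argument of Theorem \ref{t2} verbatim before invoking Lemma \ref{l1}(1) to identify $\|T(r)f_i\|_\infty^{\beta_i}$. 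Your observation that each $\mathcal{J}_i$ must be paired with its own $\mathscr{G}_i$ (since no common factor $e^{\rho_1B_1^H+\rho_2B_2^H}$ can be extracted) is precisely the only adjustment the paper makes.
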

    \begin{proof}
    	For all $x\in \mathbb{R}^{d},  \ t\geq 0,$ let us define the operators $\mathcal{J}_{1}, \ \mathcal{J}_{2}$ as
    	\begin{align}
    	\mathcal{J}_{1}v(t,x)&=T(t)f_{1}(x) \nonumber\\&\quad+\int_{0}^{t} \exp\{(1+\beta_{1})k_{21}-k_{11} B^{H}_{1}(r)+(1+\beta_{1})k_{22}-k_{12}B^{H}_{2}(r)\}\left(T(t-r)v\right)^{1+\beta_{1}}dr,\nonumber\\
    	\mathcal{J}_{2} v(t,x)&=T(t)f_{2}(x)\nonumber\\&\quad+\int_{0}^{t}\exp\{(1+\beta_{2})k_{11}-k_{21} B^{H}_{1}(r)+(1+\beta_{2})k_{12}-k_{22}B^{H}_{2}(r)\}\left(T(t-r)v\right)^{1+\beta_{2}}dr,\nonumber
    	\end{align}
    	where $v$ is any non-negative, bounded and measurable function. Moreover, on the set $t < \tau_{*},$ we set
    	\begin{align}
    	\mathscr{G}_{1}(t)&=\left[ 1-\beta_{1} \int_{0}^{t}  \exp\{(1+\beta_{1})k_{21}-k_{11} B^{H}_{1}(r)+(1+\beta_{1})k_{22}-k_{12}B^{H}_{2}(r)\}\left\| T(r)f_{1} \right\|_{\infty}^{\beta_{1}}dr\right]^{\frac{-1}{\beta_{1}}},\nonumber\\
    	\mathscr{G}_{2}(t)&=\left[ 1- \beta_{2} \int_{0}^{t}  \exp\{(1+\beta_{2})k_{11}-k_{21} B^{H}_{1}(r)+(1+\beta_{2})k_{12}-k_{22}B^{H}_{2}(r)\}\left\| T(r)f_{2} \right\|_{\infty}^{\beta_{2}}dr\right]^{\frac{-1}{\beta_{2}}}.\nonumber
    	\end{align}
    	By using same way as in the proof of Theorem \ref{t2}, we get 
    	\begin{eqnarray}
    	v_{1}(t,x)=\mathcal{J}_{1} v_{2}(t,x), \  v_{2}(t,x)=\mathcal{J}_{2} v_{1}(t,x), \ \ x\in \mathbb{R}^{d}, \ \ 0 \leq t<\tau_{*}.\nonumber
    	\end{eqnarray}
    	Moreover,  we have
    	\begin{eqnarray} 
    	\mathcal{J}_{1} v (t,x)\leq T(t)f_{1}(x) \mathscr{G}_{1}(t)\ \text{ and }\ \mathcal{J}_{2} u(t,x)\leq T(t)f_{2}(x) \mathscr{G}_{2}(t), \nonumber
    	\end{eqnarray}
    	so that
    	\begin{align}
    	v_{1}(t,x) &\leq \frac{T(t)f_{1}(x)}{\left[ 1-\beta_{1}p(1,0)^{\beta_{1}}\left\| f_{1} \right\|_{\infty}^{\beta_{1}}\int_{0}^{t}  e^{(1+\beta_{1})k_{21}-k_{11} B^{H}_{1}(r)+(1+\beta_{1})k_{22}-k_{12}B^{H}_{2}(r)+\lambda \beta_{1}r}r^{-d \beta_{1}/\alpha}dr\right]^{\frac{1}{\beta_{1}}}}, \nonumber\\
    	v_{2}(t,x) &\leq \frac{T(t)f_{2}(x)}{\left[ 1-\beta_{2}p(1,0)^{\beta_{2}}\left\| f_{1} \right\|_{\infty}^{\beta_{2}}\int_{0}^{t}  e^{(1+\beta_{2})k_{11}-k_{21} B^{H}_{1}(r)+(1+\beta_{2})k_{12}-k_{22}B^{H}_{2}(r)+\lambda \beta_{2}r}r^{-d \beta_{2}/\alpha}dr\right]^{\frac{1}{\beta_{2}}}}, \nonumber	
    	\end{align}
    and one can complete the proof of \eqref{7.1}. 
    \end{proof}
    The following theorem gives an upper bounds for the finite-time blow-up  of solution $u=(u_1,u_2)^{\top}$ of the system \eqref{b1} for $1/2 \leq H <1$ with $\beta_{1} \geq \beta_{2}>0.$
    \begin{theorem} \label{thm6.2}
    Let $\frac{1}{2} \leq H<1$ and let $u=(u_1,u_2)^{\top}$ be a weak solution of \eqref{b1} with nonnegative initial data $f_1$ and $f_2$ which are  bounded measurable functions. 	
    	\item [1.] If $\beta_{1}=\beta_{2}=\beta>0.$ Then $u=(u_1,u_2)^{\top}$ blow-up in finite-time which is  given by
    	\begin{align}
    	\theta_{1} = \inf \Bigg\{ &t>r_{0} : \int_{r_{0}}^{t} e^{k_{1,2}s+((1+\beta)k_{21}-k_{11}) B^{H}_{1}(s)+((1+\beta)k_{22}-k_{12})B^{H}_{2}(s)}s^{\frac{-d\beta}{\alpha}}\nonumber\\ &\wedge e^{k_{2,1}s+((1+\beta)k_{21}-k_{11}) B^{H}_{1}(s)+((1+\beta)k_{22}-k_{12})B^{H}_{2}(s)}s^{\frac{-d\beta}{\alpha}} ds \geq \frac{2^{(1+\beta)}}{\beta \left(r_{1}e^{-\gamma_{1}r_{0}}+r_{2}e^{-\gamma_{2}r_{0}} \right)^{\beta}} \nonumber\\ \mbox{(or)} \
    	&\int_{r_{0}}^{t} e^{k_{1,2}s+((1+\beta)k_{11}-k_{21}) B^{H}_{1}(s)+((1+\beta)k_{12}-k_{22})B^{H}_{2}(s)}s^{\frac{-d\beta}{\alpha}}\nonumber\\ &\wedge e^{k_{2,1}s+((1+\beta)k_{11}-k_{21}) B^{H}_{1}(s)+((1+\beta)k_{12}-k_{22})B^{H}_{2}(s)}s^{\frac{-d\beta}{\alpha}} ds \geq \frac{2^{(1+\beta)}}{\beta \left(r_{1}e^{-\gamma_{1}r_{0}}+r_{2}e^{-\gamma_{2}r_{0}} \right)^{\beta}}    \Bigg\}. \nonumber 
    	\end{align}
    	Moreover, if $\tau$ is the blow-up time of \eqref{b1}, then 
    	\begin{align}
    	\tau &\leq \min \left\lbrace  (r_{0}+\tau_{m_1})(1+2^{\alpha}), (r_{0}+\tau_{m_2})(1+2^{\alpha}) \right\rbrace \leq (r_{0}+\theta_{1})(1+2^{\alpha}),\nonumber
    	\end{align} 
    	where $r_{i}=p(1,0)2^{-2d}\exp\{\gamma_{i}r_{0}\}\mathbb{E}\left[ f_{i}(X_{2^{-\alpha}r_{0}}) \right],$ and $k_{i,j}=-\mathit{N}_{i}+(1+\beta_{i})\mathit{N}_{j}$ for $i=1,2, \ j\in \left\{ 1,2 \right\} / \left\{i\right\}.$  
    	\item [2.] If $\beta_{1}>\beta_{2}>0$ and let $D_{1}=\left( \frac{\beta_{1}-\beta_{2}}{1+\beta_{1}} \right)\left(\frac{1+\beta_{1}}{1+\beta_{2}} \right)^{\frac{1+\beta_{2}}{\beta_{1}-\beta_{2}}},$
    	\begin{align}
    	\epsilon_{0}&\leq \min \Bigg\{ 1, \left( h_{1}(r_0)/D_{1}^{1/1+\beta_{2}} \right)^{\beta_{1}-\beta_{2}} \Bigg\}. \nonumber 
    	\end{align} 
    	Assume that 
    	\begin{align}
    	2^{-(1+\beta_{2})}\epsilon_{0}\left(r_{1}e^{-\gamma_{1}r_{0}}+r_{2}e^{-\gamma_{2}r_{0}}\right)^{1+\beta_{2}}\geq \epsilon_{0}^{\frac{1+\beta_{1}}{\beta_{1}-\beta_{2}}}D_{1}. \nonumber 
    	\end{align} 
    	Then $u=(u_1,u_2)^{\top}$ blow-up in finite-time which is given by
    	\begin{align}
    	\theta_{2} = \inf \Bigg\{  &t> r_0 : \int_{r_0}^{t}2^{-d(1+\beta_{1})/\alpha} e^{k_{1,2}s+((1+\beta_{1})k_{21}-k_{11}) B^{H}_{1}(s)+((1+\beta_{2})k_{22}-k_{12})B^{H}_{2}(s)}s^{\frac{-d\beta_{1}}{\alpha}}\nonumber\\ &\wedge 2^{-d(1+\beta_{2})/\alpha}e^{k_{2,1}s+((1+\beta_{1})k_{21}-k_{11}) B^{H}_{1}(s)+((1+\beta_{2})k_{22}-k_{12})B^{H}_{2}(s)}s^{\frac{-d\beta_{2}}{\alpha}} ds  \nonumber\\ 
    	& \qquad\geq\left[\beta_{2} \left(r_{1} e^{-\gamma_{1}r_{0}}+r_{2}e^{-\gamma_{2}r_{0}}\right)^{\beta_{2}} \left(\frac{\epsilon_0}{2^{1+\beta_{2}}}-\frac{ \epsilon_{0}^{\frac{1+\beta_{1}}{\beta_{1}-\beta_{2}}}D_{1}}{\left(r_{1}e^{-\gamma_{1}r_{0}}+r_{2}e^{-\gamma_{2}r_{0}}\right)^{1+\beta_{2}}}\right)\right]^{-1} \nonumber\\ \mbox{or} &\quad \int_{r_0}^{t}2^{-d(1+\beta_{1})/\alpha} e^{k_{1,2}s+((1+\beta_{2})k_{11}-k_{21}) B^{H}_{1}(s)+((1+\beta_{2})k_{12}-k_{22})B^{H}_{2}(s)}s^{\frac{-d\beta_{1}}{\alpha}}\nonumber\\ &\wedge 2^{-d(1+\beta_{2})/\alpha} e^{k_{2,1}s+((1+\beta_{2})k_{11}-k_{21}) B^{H}_{1}(s)+((1+\beta_{2})k_{12}-k_{22})B^{H}_{2}(s)}s^{\frac{-d\beta_{2}}{\alpha}} ds\nonumber\\ 
    	& \qquad \geq \left[\beta_{2} \left(r_{1} e^{-\gamma_{1}r_{0}}+r_{2}e^{-\gamma_{2}r_{0}}\right)^{\beta_{2}} \left(\frac{\epsilon_0}{2^{1+\beta_{2}}}-\frac{ \epsilon_{0}^{\frac{1+\beta_{1}}{\beta_{1}-\beta_{2}}}D_{1}}{\left(r_{1}e^{-\gamma_{1}r_{0}}+r_{2}e^{-\gamma_{2}r_{0}}\right)^{1+\beta_{2}}}\right)\right]^{-1} \Bigg\}. \nonumber
    	\end{align}
    	Moreover, if $\tau$ is the blow-up time of \eqref{b1}, then  
    	\begin{align}
    	\tau &\leq \min \left\lbrace  (r_{0}+\tau_{m_1})(1+2^{\alpha}), (r_{0}+\tau_{m_2})(1+2^{\alpha}) \right\rbrace \leq (r_{0}+\theta_{2})(1+2^{\alpha}), \nonumber
    	\end{align} 
    	where $r_{i}=p(1,0)2^{-2d}\exp\{\gamma_{i}r_{0}\}\mathbb{E}\left[ f_{i}(X_{2^{-\alpha}r_{0}}) \right],$ and $k_{i,j}=-\mathit{N}_{i}+(1+\beta_{i})\mathit{N}_{j}$ for $i=1,2, \ j\in \left\{ 1,2 \right\} / \left\{i\right\}.$ 
    \end{theorem}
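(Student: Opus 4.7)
The plan is to mirror the strategy of Theorem \ref{thm1}, tracking carefully the two noise exponents that appear in the transformed system \eqref{s1} once we drop the coupling condition \eqref{a4}. Specifically, expanding $e^{-k_{i1}B_1^H - k_{i2}B_2^H}(e^{k_{j1}B_1^H + k_{j2}B_2^H}v_j)^{1+\beta_i}$ for $i=1$ gives the noise factor $e^{((1+\beta_1)k_{21}-k_{11})B_1^H + ((1+\beta_1)k_{22}-k_{12})B_2^H}$, while for $i=2$ it gives $e^{((1+\beta_2)k_{11}-k_{21})B_1^H + ((1+\beta_2)k_{12}-k_{22})B_2^H}$. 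Under \eqref{a4} these collapse to the common $e^{\rho_1 B_1^H + \rho_2 B_2^H}$; in the general case they do not, which is precisely the source of the two alternatives (``or'') in the definitions of $\theta_1$ and $\theta_2$.

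First I would apply the semigroup $T_i(t) = e^{\mathit{N}_i t}S_t$ (with $\mathit{N}_i$ from \eqref{mn1}, so the argument handles both $H=\tfrac12$ and $H>\tfrac12$ uniformly) to the mild formulation at $t+r_0$, and then define $m_i(t) = S_t v_i(t,0)$ as in Lemma \ref{lem1}. Using Lemma \ref{l1}(1) together with Jensen's inequality, one obtains $S_{2t+r_0-s}v_j^{1+\beta_i}(r_0+s,0) \geq \bigl(\tfrac{r_0+s}{2t+2r_0}\bigr)^{d/\alpha} m_j^{1+\beta_i}(r_0+s)$ exactly as in \eqref{n8}, and Lemma \ref{l1}(1),(4) yield the lower bound $S_{2t+r_0}v_i(r_0,0) \geq r_i(2t+2r_0)^{-d/\alpha}$ where $r_i$ is as in \eqref{nn2}. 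Setting $w_i(r) = e^{-\mathit{N}_i r}(2r)^{d/\alpha}m_i(r)$ and writing the two resulting integral inequalities separately (one weighted by the $i=1$ noise factor, the other by the $i=2$ one) gives the coupled system
\begin{align*}
w_1(t) &\geq r_1 e^{-\gamma_1 r_0} + \int_{r_0}^{t} G_{1,2}(r)\, w_2^{1+\beta_1}(r)\, dr,\\
w_2(t) &\geq r_2 e^{-\gamma_2 r_0} + \int_{r_0}^{t} G_{2,1}(r)\, w_1^{1+\beta_2}(r)\, dr,
\end{align*}
where $G_{1,2}(r)$ carries the noise factor of the $v_2^{1+\beta_1}$ term and $G_{2,1}(r)$ carries the noise factor of the $v_1^{1+\beta_2}$ term (i.e.\ the first and second groups of exponentials appearing in the theorem statement, respectively).

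Next I would set $h_i \leq w_i$ as the majorants satisfying the corresponding integral equations, and $E(t) = h_1(t) + h_2(t)$, producing $E'(t) = G_{1,2}(t)h_2^{1+\beta_1}(t) + G_{2,1}(t)h_1^{1+\beta_2}(t)$ with $E(r_0) = r_1 e^{-\gamma_1 r_0}+r_2 e^{-\gamma_2 r_0}$. In Part 1 ($\beta_1=\beta_2=\beta$) one bounds $G_{1,2}h_2^{1+\beta}+G_{2,1}h_1^{1+\beta} \geq \min\{G_{1,2},G_{2,1}\}(h_1^{1+\beta}+h_2^{1+\beta}) \geq 2^{-(1+\beta)}\min\{G_{1,2},G_{2,1}\}E^{1+\beta}$ via the elementary inequality $a^n+b^n\geq 2^{-n}(a+b)^n$; however, since $G_{1,2}$ and $G_{2,1}$ involve the \emph{same} deterministic and noise structure when restricted to one equation but differ across equations, one naturally gets two independent Bernoulli-type ODE comparisons, one driven by the first noise group and one by the second, and $E$ blows up as soon as either of the two time-change integrals exceeds $2^{1+\beta}\bigl/\bigl(\beta(r_1e^{-\gamma_1 r_0}+r_2e^{-\gamma_2 r_0})^\beta\bigr)$. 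This is exactly the ``or'' structure in $\theta_1$. Part 2 ($\beta_1>\beta_2$) runs identically but uses Young's inequality \eqref{na10} with exponent $b=(1+\beta_1)/(1+\beta_2)$ to subordinate $h_2^{1+\beta_1}$ to $h_2^{1+\beta_2}$, producing the term $D_1 \epsilon_0^{(1+\beta_1)/(\beta_1-\beta_2)}$ and reducing to the same dichotomy. Finally the bound $\tau \leq \min\{(r_0+\tau_{m_1}),(r_0+\tau_{m_2})\}(1+2^\alpha)$ follows from Lemma \ref{lem1}.

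The main obstacle is the bookkeeping: without \eqref{a4} the two noise exponents no longer coincide, so one cannot sum the equations and pull out a single exponential factor as in \eqref{sa1}--\eqref{sa2}. The fix is to run the comparison argument twice, once using the $i=1$ noise block and once using the $i=2$ block, which yields the two integral conditions in $\theta_1$ (respectively $\theta_2$). Care must be taken that both ODE comparisons are valid, i.e.\ that the lower bound $h_i \leq w_i$ holds with the appropriate $G_{i,j}$ at each step, and that the sufficient condition \eqref{nca4} (in Part 2) is invoked only once to guarantee $E(t)\geq E(r_0)>0$ throughout.
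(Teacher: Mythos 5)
Your overall skeleton is the one the paper intends: the paper gives no independent argument for Theorem \ref{thm6.2} beyond the remark that one ``proceeds in the same way as in the proof of Theorem \ref{thm1}'', and your reduction to $m_i$, $w_i$, the minorants $h_i$, $E=h_1+h_2$, and Lemma \ref{lem1} reproduces that template faithfully, including the correct identification of the two distinct noise exponents that appear once \eqref{a4} is dropped. (A small point: Lemma \ref{lem1} is stated under \eqref{a4}, so you should note that its proof only uses positivity of the exponential weight and therefore carries over verbatim to the general noise factors.)

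The genuine gap is in the step where you claim that ``one naturally gets two independent Bernoulli-type ODE comparisons, one driven by the first noise group and one by the second,'' so that $E$ blows up as soon as \emph{either} of the two integrals in $\theta_1$ exceeds the threshold. That does not follow from
\begin{align*}
E'(t)=G_{1,2}(t)\,h_2^{1+\beta}(t)+G_{2,1}(t)\,h_1^{1+\beta}(t).
\end{align*}
If you keep only one of the two terms on the right, you are left with $h_2^{1+\beta}$ (or $h_1^{1+\beta}$) alone, and there is no lower bound of the form $c\,E^{1+\beta}$ for a single summand, since one of $h_1,h_2$ may be negligible compared with $E$; the inequality $a^n+b^n\geq 2^{-n}(a+b)^n$ needs \emph{both} powers. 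The only way to close the ODE, exactly as in \eqref{sa1}, is to factor out the pointwise minimum $\min\{G_{1,2}(t),G_{2,1}(t)\}$ of the two full coefficients, which now differ in \emph{both} the drift ($k_{1,2}$ versus $k_{2,1}$) and the noise block ($A(s):=((1+\beta)k_{21}-k_{11})B_1^H(s)+((1+\beta)k_{22}-k_{12})B_2^H(s)$ versus $B(s):=((1+\beta)k_{11}-k_{21})B_1^H(s)+((1+\beta)k_{12}-k_{22})B_2^H(s)$). This yields a \emph{single} blow-up condition involving $\int\min\{e^{k_{1,2}s+A(s)},e^{k_{2,1}s+B(s)}\}s^{-d\beta/\alpha}\,ds$, not the ``or'' of two conditions each carrying one fixed noise block. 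Indeed, on the (random, $s$-dependent) set where $A(s)\geq B(s)$ the first integrand of $\theta_1$ satisfies $e^{A(s)}\min\{e^{k_{1,2}s},e^{k_{2,1}s}\}\geq\min\{e^{k_{1,2}s+A(s)},e^{k_{2,1}s+B(s)}\}$, i.e.\ it \emph{over}estimates the coefficient that actually drives $E$, and replacing the driving coefficient by something larger produces a $\theta_1$ that is too small to be a provable upper bound for $\tau_{m_i}$; the symmetric failure occurs for the second integrand where $B(s)\geq A(s)$. Since neither alternative dominates pointwise in $s$ (the order of $A$ and $B$ fluctuates along the paths), running ``the comparison argument twice'' does not produce the stated dichotomy. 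To make the argument sound you must either state the conclusion with the single integral of $\min\{G_{1,2},G_{2,1}\}$ (the honest output of the Theorem \ref{thm1} machinery), or insert the additional lower bound $\min\{e^{A(s)},e^{B(s)}\}$ and accept the correspondingly weaker threshold; as written, the justification of the ``or'' structure is missing and, in my reading, cannot be supplied along the route you describe.
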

    Proof of  Theorem \ref{thm6.2} can be obtained by proceeding in the same way as in the proof of Theorem \ref{thm1}.
	 
  \section{Appendix}
Here,  we provide  a  comparison result  which is used in Section \ref{sec4} (see Theorem 1.3 of \cite{teschl} also). 
  \begin{align}
  	w_{i}(t) &\geq r_{i}e^{-\gamma_{i}r_{0}}+\int_{r_{0}}^{t}g_{i,j}(r)w_{j}^{1+\beta_{i}}(r) dr, \ \ t \geq r_{0}, \nonumber
  \end{align}
  and
  \begin{align*}
  	h_{i}(t)=r_{i}e^{-\gamma_{i}r_{0}}+\int_{r_{0}}^{t}g_{i,j}(r)h_{j}^{1+\beta_{i}}(r) dr,\ i=1,2,\ \left\lbrace j \right\rbrace=\left\lbrace1,2\right\rbrace/\{i\}. 
  \end{align*}
  We have to show that $w_{i}(t) \geq h_{i}(t),\ i=1,2,$ for $t \geq r_{0}.$	Let $$	x_{i}(t)=r_{i}e^{-\gamma_{i}r_{0}}+\int_{r_{0}}^{t}g_{i,j}(r)w_{j}^{1+\beta_{i}}(r) dr,\ i=1,2,\ \left\lbrace j \right\rbrace=\left\lbrace1,2\right\rbrace/\{i\}.$$ Then $w_{i}(t) \geq x_{i}(t),\ i=1,2,$ for all $t \geq r_{0},$ and 
 \begin{align*}
  	\dot{x}_{i}(t)&=g_{i,j}(t)w_{j}^{1+\beta_{i}}(t) \geq g_{i,j}(t)x_{j}^{1+\beta_{i}}(t), \nonumber\\
  	\dot{h}_{i}(t)&=g_{i,j}(t)h_{j}^{1+\beta_{i}}(t). \nonumber  
\end{align*}
  Therefore, we have
  \begin{align*}
 	\left\{
 	\begin{aligned}
  	\dot{x}_{i}(t)-g_{i,j}(t)x_{j}^{1+\beta_{i}}(t) &\geq \dot{h}_{i}(t)-g_{i,j}(t)h_{j}^{1+\beta_{i}}(t), \nonumber\\
  	x_{i}(r_{0})=h_{i}(r_{0})&=r_{i}e^{-\gamma_{i}r_{0}}. \nonumber 
  \end{aligned} 
\right.
  \end{align*}
  It is enough to show that $h_{i}(t) \leq x_{i}(t),$ for all $ t \in [r_{0}, \tau).$ Suppose that $h_{i}(t) > x_{i}(t),$ for $t \in [r_{0}, r_{0}+\varepsilon),\ \varepsilon>0.$ Let $\Delta_{i}(t)=h_{i}(t)-x_{i}(t)$. Then an application of Taylor's formula yields 
  \begin{align}
  	\dot{\Delta}_{i}(t)&=\dot{h}_{i}(t)-\dot{x}_{i}(t)  \leq g_{i,j}(t)\left[h_{j}^{1+\beta_{i}}(t)-x_{j}^{1+\beta_{i}}(t)\right]  \nonumber\\
  	&\leq (1+\beta_{i}) \sup_{t \in[r_0,\tau]}|g_{i,j}(t)|\sup_{t \in[r_0,\tau]}|\theta h_{j}(t)+(1-\theta)x_{j}(t)|^{\beta_{i}} \Delta_{j}(t) \nonumber\\
  	&=\kappa_{i} \Delta_{j}(t). \nonumber 
  \end{align}	
  for $i=1,2,\ \left\lbrace j \right\rbrace=\left\lbrace1,2\right\rbrace/\{i\}$, where $\kappa_i=\sup\limits_{t \in[r_0,\tau]}|g_{i,j}(t)|\sup\limits_{t \in[r_0,\tau]}\left(| h_{j}(t)|+|x_{j}(t)|\right)^{\beta_{i}}$. Let $E(t)=\Delta_{1}(t)+\Delta_{2}(t),\ t \geq r_{0}.$ Then for $\kappa=\max\{\kappa_1,\kappa_2\}$, we have
  \begin{align}
  	\frac{dE}{dt} \leq \kappa_{1} \Delta_{1}(t)+\kappa_{2} \Delta_{2}(t) \leq \kappa  \left[\Delta_{1}(t)+\Delta_{2}(t) \right]=\kappa E(t)\Rightarrow 
  	\frac{d}{dt}\left(e^{-\kappa t} E(t) \right) \leq 0. \nonumber	
  \end{align}	 
  Therefore, we obtain $E(t) \leq 0$ for all $t \in [r_{0}, r_{0}+\varepsilon)$ and this implies that $\Delta_{2}(t)+\Delta_{1}(t) \leq 0,$ which is a contradiction, since $\Delta_i(t)>0,$ for all $t \in [r_{0}, r_{0}+\varepsilon)$.  Hence $w_{i}(t) \geq h_{i}(t),\ i=1,2,$ for all $t \in [r_{0},\tau).$
  
    \medskip\noindent\\
	{\bf Acknowledgements:} The first author is supported by the University Research Fellowship of Periyar University, India. M. T. Mohan would  like to thank the Department of Science and Technology (DST), India for Innovation in Science Pursuit for Inspired Research (INSPIRE) Faculty Award (IFA17-MA110). The third author is supported by the Fund for Improvement of Science and Technology Infrastructure (FIST) of DST (SR/FST/MSI-115/2016). The authors sincerely would like to thank the reviewers for their valuable comments and suggestions, which helped us to improve the manuscript significantly.

\end{document}